\newtheorem{Lem}{Lemma}[section]
\newtheorem{Thm}[Lem]{Theorem}
\newtheorem{Cor}[Lem]{Corollary}
\theoremstyle{definition}
\newtheorem{Def}[Lem]{Definition}
\theoremstyle{remark}
\newtheorem{Fact}[Lem]{Fact}
\newtheorem{Facts}[Lem]{Facts}
\newtheorem{Rem}[Lem]{Remark}
\DeclareMathOperator{\predec}{prec}
\DeclareMathOperator{\ro}{ro}
\DeclareMathOperator{\cl}{cl}
\DeclareMathOperator{\dom}{dom}
\DeclareMathOperator{\cf}{cf}
\DeclareMathOperator{\NS}{NS}
\DeclareMathOperator{\seq}{seq}
\newcommand{\n}{\underaccent{\tilde}}
\newcommand{\name}[1]{\n{#1}}
\newcommand{\esm}{\prec}
\newcommand{\forc}{\Vdash}
\newcommand{\pow}{\mathfrak{P}}
\newcommand{\propa}{\mathfrak{a}}
\newcommand{\propb}{\mathfrak{b}}
\newcommand{\al}[1]{\aleph_{#1}}
\newcommand{\om}[1]{\omega_{#1}}
\newcommand{\cac}{\textrm{c\&c}}
\newcommand{\cacset}{\textrm{c\&c}^\textrm{set}}
\newcommand{\Gmin}{\textrm{c\&c}^\textrm{min}}
\newcommand{\Gminne}{\textrm{c\&c}^\textrm{min}_\textrm{ne}}
\newcommand{\BMIe}{\textrm{BM}}
\newcommand{\BMze}{\textrm{Id}}
\newcommand{\BMIne}{\textrm{BM}_{\textrm{ne}}}
\newcommand{\BMzne}{\textrm{Id}_{\textrm{ne}}}
\newcommand{\PDIe}{\textrm{PD}_{\textrm{e}}}
\newcommand{\PDze}{\textrm{PD}^{\emptyset}_{\textrm{e}}}
\newcommand{\PDIne}{\textrm{PD}}
\newcommand{\PDzne}{\textrm{PD}^{\emptyset}}
\begin{document}

\date{2006-09-14}
\subjclass[2000]{03E35;03E55}
\title{More on the pressing down game.}

\author[Jakob Kellner]{Jakob Kellner$^\ast$}
\address{Kurt G\"odel Research Center for Mathematical Logic\\
  Universit\"at Wien\\
  W\"ahringer Stra\ss e 25\\
  1090 Wien, Austria}
\email{kellner@fsmat.at}
\urladdr{http://www.logic.univie.ac.at/$\sim$kellner}
\thanks{$^\ast$ supported by European Union FP7 grant PERG02-GA-2207-224747 and the Austrian FWF project P21651-N13.}
\author[Saharon Shelah]{Saharon Shelah$^\dag$}
\address{Einstein Institute of Mathematics\\
  Edmond J. Safra Campus, Givat Ram\\
  The Hebrew University of Jerusalem\\
  Jerusalem, 91904, Israel\\
  and
  Department of Mathematics\\
  Rutgers University\\
  New Brunswick, NJ 08854, USA}
\email{shelah@math.huji.ac.il}
\urladdr{http://shelah.logic.at/}
\thanks{$^\dag$ supported by
  the United States-Israel Binational Science Foundation (Grant no. 2002323),
  publication 939.}

\begin{abstract}
  We investigate the pressing down game and its relation to the Banach Mazur
  game. In particular we show: Consistently, there is
  a nowhere precipitous normal ideal $I$ on $\aleph_2$ such that player nonempty
  wins the pressing down game of length $\aleph_1$ on $I$ even if player empty
  starts.
\end{abstract}

\maketitle

We investigate the pressing down game and its relation to the Banach Mazur
game. Definitions (and some well known or obvious properties) are given in
Section~\ref{sec:def}.  The results are summarized in
Section~\ref{sec:results}. This paper continues (and simplifies,
see~\ref{lem:burp2}) the investigation of Pauna and the authors in
\cite{MR2371208}.

We thank the referee for kindly pointing out an error and an embarrasingly large
number of typos.

After the submission of this paper it came to our attention that
Gitik~\cite{MR2414461} already proved Fact~\ref{thm:oiuqweh} of this paper,
moreover he just requires a measurable cardinal (we use a supercompact).
Nevertheless we include our proof in this paper, maybe the construction could
be of intrerest in other situations.

\section{Definitions}\label{sec:def}

We use the following notation:
\begin{itemize}
  \item For forcing conditions $q\leq p$, the smaller condition $q$
    is the stronger one. We stick to Goldstern's alphabetic
    convention~\cite[1.2]{MR1601976}: Whenever two conditions are comparable the
    notation is chosen so that the variable used for the stronger condition
    comes ``lexicographically'' later.
  \item
    $E^\kappa_\lambda=\{\alpha\in\kappa:\, \cf(\alpha)=\lambda\}$.
  \item
    $\NS_\kappa$ is the nonstationary ideal on $\kappa$.
  \item The dual of an ideal $I$ is the filter $\{A\subseteq \kappa:\,
    \kappa\setminus A\in I\}$ and vice versa.
  \item For an ideal $I$ on $\kappa$ 
    and a positive set $A$ (i.e., $A\notin I$),
    we set
    $I\restriction A$ to be the ideal generated by
    $I\cup \{\kappa\setminus A\}$.
\end{itemize}

We always assume that $\kappa$ is a regular uncountable cardinal and that $I$
is a ${<}\kappa$-complete ideal on $\kappa$.  Unless noted otherwise,
we will also assume that $I$ is normal.

We now recall the definitions of several games of length $\omega$, played by
the players empty and nonempty.  We abbreviate ``having a winning strategy for
$G$'' with ``winning $G$'' (as opposed to: ``winning a specific run of $G$'').

First we define four variants of the pressing down game (this game
has been used, e.g., in~\cite{MR1191613}).
\begin{Def}
  \begin{itemize}
    \item $\PDIne(I)$ is played as follows:
      Set $S_{-1}=\kappa$.  At stage $n$, empty chooses a regressive function
      $f_n:\kappa\to\kappa$, and nonempty
      chooses $S_n$, an $f_n$-homogeneous $I$-positive subset of $S_{n-1}$.
      Empty wins the run of the game if $\bigcap_{n\in\omega} S_n\in I$.
    \item $\PDzne(I)$ is played like $\PDIne(I)$, but empty wins
      the run if $\bigcap_{n\in\omega} S_n=\emptyset$.
    \item $\PDIe(I)$ is played like $\PDIne(I)$, but empty can first choose
      $S_{-1}$ to be an arbitrary $I$-positive set.
    \item $\PDze$ is defined analogously.
  \end{itemize}
\end{Def}

So we have four variants of the pressing down game, depending on two
parameters: whether the winning condition for player nonempty is
``${\neq}\emptyset$'' or ``${\notin}I$'', and whether empty has
the first move or not.

We now analogously define four variants of the Banach Mazur game:
\begin{Def}
  \begin{itemize}
    \item $\BMIe(I)$ is played as follows: Set
      $S_{-1}=\kappa$.  At stage $n$, empty chooses
      an $I$-positive subset $X$ of $S_{n-1}$, and nonempty chooses
      an $I$-positive subset $S_n$ of $X$.  Empty wins the run if
      $\bigcap_{n\in\omega} S_n\in I$.
    \item The ideal game
      $\BMze(I)$ is played just like $\BMIe(I)$, but empty
      wins the run if $\bigcap_{n\in\omega} S_n=\emptyset$.
    \item $\BMIne(I)$ is played just like
      $\BMIe(I)$, but nonempty has the first move.
    \item $\BMzne(I)$ is defined analogously.
  \end{itemize}
\end{Def}

More generally, we can define the Banach Mazur game $\BMIe(B)$ on a Boolean
algebra $B$: The players choose decreasing (nonzero) elements $a_n\in B$,
nonempty wins if there is some (nonzero) $b\in B$ smaller than all $a_n$.  Then
$\BMIe(I)$ is equivalent to the corresponding game $\BMIe(B_I)$ on the Boolean
algebra $B_I=\pow(\kappa)/I$ (since $I$ is $\sigma$-complete), the same holds for
$\BMIne(I)$ and $\BMIne(B_I)$; we could equivalently use the completion
$\ro(B_I)$ instead of $B_I$.
Also the ${\notin}I$ versions of the pressing down game can be played modulo
null sets, i.e., on the Boolean algebra $B_I$, in the obvious way.
For the ${\neq}\emptyset$ versions of the games, the version played
on $B_I$ does not make sense.

In the ${\notin}I$ version, the pressing down and Banach Mazur games have
natural generalizations to other lengths $\delta$: At a limit stages
$\gamma$, we use $\bigcap_{\alpha<\gamma} S_\alpha$ instead of
$S_{\gamma-1}$, and empty wins a run iff this set is in
$I$ for any $\gamma<\delta$.  (I.e., nonempty wins a run iff the run has length
$\delta$. So in this setting, the games defined above are the ones of length
$\omega+1$.) For the ${\neq}\emptyset$ versions of the games, lengths other than
$\omega+1$ seem less natural.

We are interested in the existence of winning strategies:
\begin{Def}
  \begin{itemize}
    \item We write $\propb(G)$ for ``nonempty wins $G$'' and
      $\propa(G)$ for ``empty does not win $G$''.
    \item The games $G$ and $H$ are equivalent, if
      $\propb(G)\leftrightarrow \propb(H)$ and
      $\propa(G)\leftrightarrow \propa(H)$.
    \item $G$ is stronger than $H$, if
      $\propb(G)\rightarrow \propb(H)$ and
      $\propa(G)\rightarrow \propa(H)$.
  \end{itemize}
\end{Def}

We trivially get the following implications,
see Figure~\ref{fig:eins}:
\begin{Facts}
  \begin{itemize}
    \item $\propb(G)\rightarrow\propa(G)$ for all games.
    \item The Banach-Mazur game is stronger than
          the according pressing down game. E.g.,
          $\BMIne(I)$ is stronger than $\PDIne(I)$ etc.
    \item The ${\notin}I$ version is stronger than the
          ${\neq}\emptyset$ one. E.g.,
          $\BMIe(I)$ is stronger than $\BMze(I)$ etc.
    \item The version with empty choosing first is stronger. E.g.,
          $\BMIe(I)$ is stronger than $\BMIne(I)$ etc.
  \end{itemize}
\end{Facts}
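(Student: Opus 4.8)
\emph{Proof plan.} All four items are purely formal; I would reduce each to the inspection of a single run, the only genuine ingredient being the pressing down lemma for normal ideals, used in the second item. For the first item, fix a game $G$ from our list and recall that on every run the winning condition for empty is the exact negation of the one for nonempty; hence if empty had a winning strategy $\sigma$ and nonempty a winning strategy $\tau$, the unique run obtained by playing $\sigma$ against $\tau$ would be won by both players, which is absurd, so $\propb(G)$ precludes any winning strategy for empty, i.e.\ $\propa(G)$.

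For the second item I would use two trivial observations: (a) a subset of an $f$-homogeneous set is again $f$-homogeneous; and (b) by normality of $I$, every $I$-positive $S$ and every regressive $f:\kappa\to\kappa$ admit an $I$-positive $f$-homogeneous $X\subseteq S$. Given a winning strategy $\tau$ for nonempty in the Banach Mazur game, I let nonempty play the pressing down game while running a shadow copy of the Banach Mazur game in which, whenever empty plays the regressive function $f_n$, the simulated empty move is an $I$-positive $f_n$-homogeneous $X_n\subseteq S_{n-1}$ (fixed in advance by a choice function, so that this is a genuine strategy); then $\tau$'s reply $S_n\subseteq X_n$ is $f_n$-homogeneous by (a), is a legal pressing down move, and the two runs have the same intersections, so nonempty wins. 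This gives $\propb(\BMIne(I))\to\propb(\PDIne(I))$ and, mutatis mutandis, the three analogous implications for the other Banach Mazur / pressing down pairs. For the half concerning $\propa$, I argue contrapositively: from a winning strategy $\sigma$ for empty in the pressing down game, I let empty play the Banach Mazur game by translating each prescribed regressive $f_n$ into the move ``play an $I$-positive $f_n$-homogeneous $X_n\subseteq S_{n-1}$''; whatever $S_n\subseteq X_n$ nonempty answers is again $f_n$-homogeneous and can be fed back to $\sigma$, the intersections agree, and so empty wins the Banach Mazur game. The same bookkeeping applies to the longer ($\notin I$) versions. I expect this item to be the only one carrying content, the content being the appeal to normality in (b).

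The last two items need only a comparison of winning conditions and move trees. For the third, the $\neq\emptyset$ and $\notin I$ versions of a fixed game have the same move tree and differ only in empty's winning set, and $\bigcap_nS_n=\emptyset$ implies $\bigcap_nS_n\in I$ because $\emptyset\in I$; hence a strategy witnessing that empty wins the $\neq\emptyset$ version also witnesses that empty wins the $\notin I$ version, while a strategy witnessing that nonempty wins the $\notin I$ version also works for the $\neq\emptyset$ version, which yields both required implications. For the fourth, the ``nonempty first'' game is literally the ``empty first'' game in which empty's initial move is frozen to the trivial value $\kappa$; thus a winning strategy for nonempty in the ``empty first'' game restricts to one in the ``nonempty first'' game, and a winning strategy for empty in the ``nonempty first'' game extends to one in the ``empty first'' game by opening with that trivial move. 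Both implications follow, and all four items are established.
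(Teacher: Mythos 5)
Your argument is correct, and it fills in exactly the routine verifications the paper omits: these Facts are stated there as ``trivial'' with no proof, and your four steps (playing a strategy against a strategy for $\propb(G)\rightarrow\propa(G)$, the shadow-game translation via normality/Fodor for BM versus PD, comparison of winning sets using $\emptyset\in I$, and freezing empty's opening move to $\kappa$) are the intended ones. The only point worth a half-sentence more care is the BM games where the opening move is on the other side (e.g.\ simulating $\BMIne$ inside $\PDIne$, where nonempty's first simulated move must also be placed below $\tau$'s opener), but this is handled by the same normality step and does not affect correctness.
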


We now list some well known (or otherwise obvious) facts 
about BM and precipitous ideals\footnote{these facts do not require
  that $I$ is normal}, see~\cite{MR0505504,MR0485391,MR560220}:
\begin{Facts}
  \begin{itemize}
    \item $\propa(\BMze(I))$ is equivalent to ``$I$ is precipitous''.
    \item $\propa(\BMzne(I))$ is sometimes called ``$I$ is somewhere
      precipitous'', and its failure ``$I$ is nowhere precipitous''.
    \item A precipitous ideal on $\kappa$
      implies that $\kappa$ is measurable in an inner model.
    \item $\propb(\BMIe(\NS_{\al2}\restriction E^{\al2}_{\al1}))$ is
      equiconsistent to a measurable.
    \item
      ``$\NS_{\al1}$ is precipitous'' is also equiconsistent to a measurable.
    \item $\propb(\BMze(I))$ implies $\kappa>2^{\al0}$ and $E^\kappa_{\al0}\in I$.
    \item $\propa(\BMIe(I))$ implies $E^\kappa_{\al0}\in I$, and in
      particular $\kappa>\al1$.
  \end{itemize}
\end{Facts}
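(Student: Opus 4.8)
The plan is to prove the contrapositive: assuming $E^\kappa_{\al0}\notin I$, I will describe a winning strategy for empty in $\BMIe(I)$, which gives $\neg\propa(\BMIe(I))$. The idea is that empty uses its first move to restrict the board to $E^\kappa_{\al0}$, fixes once and for all a ladder system on $E^\kappa_{\al0}$, and then at each later stage uses normality of $I$ (that is, Fodor's lemma for the ideal) to pin down one further rung of the ladders on an $I$-positive set. After $\omega$ moves, every ordinal still in the intersection must have exactly the same ladder, hence there is at most one such ordinal, so the intersection lies in $I$.

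In detail, for each $\alpha\in E^\kappa_{\al0}$ fix an increasing sequence $\langle\alpha_n:n<\omega\rangle$ with $\sup_n\alpha_n=\alpha$. Empty's strategy is: as the first move play $X_0:=E^\kappa_{\al0}$, which is legal since $E^\kappa_{\al0}$ is $I$-positive by assumption; and whenever nonempty has just played an $I$-positive $S_n\subseteq E^\kappa_{\al0}$, note that $\alpha\mapsto\alpha_n$ is regressive on $S_n$, pick (by normality of $I$) some $c_n<\kappa$ with $\{\alpha\in S_n:\alpha_n=c_n\}\notin I$, and play $X_{n+1}:=\{\alpha\in S_n:\alpha_n=c_n\}$. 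All sets produced this way stay inside $E^\kappa_{\al0}$, so the ladders are always available. Now suppose $\beta\in\bigcap_{n<\omega}S_n$ in some run following this strategy. For every $n$ we have $\beta\in S_{n+1}\subseteq X_{n+1}$, hence $\beta_n=c_n$; therefore $\beta=\sup_n\beta_n=\sup_n c_n$, which is determined by the run alone. So $\bigcap_{n<\omega}S_n$ has at most one element and lies in $I$, i.e.\ empty wins this run. Since all of empty's moves were dictated by the strategy, this is a winning strategy for empty.

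For the last clause, suppose $\kappa=\al1$. Then $E^{\aleph_1}_{\aleph_0}$ is the club of all countable limit ordinals, so $\kappa\setminus E^{\aleph_1}_{\aleph_0}\in\NS_{\al1}\subseteq I$; together with $E^{\aleph_1}_{\aleph_0}\in I$ (the conclusion just established) this yields $\kappa\in I$, contradicting properness of $I$. Hence $\propa(\BMIe(I))$ forces $\kappa\ge\al2$. I do not expect a genuine obstacle here: the only step with content is converting the ``play a regressive function'' move available in the pressing down game into the ``play a set'' move available in the Banach--Mazur game, and this works precisely because $I$ is normal, so that the regressive map $\alpha\mapsto\alpha_n$ is constant on some $I$-positive set which empty can then simply play; everything else is routine bookkeeping.
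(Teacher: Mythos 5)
Your argument addresses only the last item of this Facts list (the other six are quoted by the paper as known results with citations to \cite{MR0505504,MR0485391,MR560220} and are not proved there either); for that item your proof is correct and is essentially the paper's own device, namely the ladder-system argument that appears verbatim in the first paragraph of the proof of Lemma~\ref{lem:ar2}, augmented by the (correct) observation that normality lets empty replace the regressive map $\alpha\mapsto\alpha_n$ by an $I$-positive set on which it is constant, which is the legal move format in the Banach--Mazur game. The only caveat: the paper's footnote asserts these facts do not require normality, whereas your appeal to Fodor uses it essentially, so your argument establishes the claim only under the paper's standing assumption that $I$ is normal.
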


\begin{figure}[tb]
  \centerline{\xymatrix@!C=0.6cm{
    &
    \propb(\BMIe)&
    &
  \\
    \propb(\BMze)&
    \propa(\BMIe)&
    \propb(\PDIe)&
  \\
    \propa(\BMze)&
    \propb(\PDze)&
    \propa(\PDIe)&
  \\
    &
    \propa(\PDze)&
    &
  \ar"1,2";"2,1"
  \ar"1,2";"2,2"
  \ar"1,2";"2,3"
  \ar"2,1";"3,1"
  \ar"2,1";"3,2"
  \ar"2,3";"3,2"
  \ar"2,3";"3,3"
  \ar"3,2";"4,2"
  \ar"2,2";"3,1"
  \ar"2,2";"3,3"
  \ar"3,1";"4,2"
  \ar"3,3";"4,2"
  }}
  \caption{The trivial implications (for empty moving first)}
  \label{fig:eins}
\end{figure}
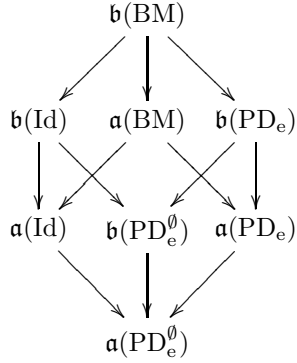

Some obvious facts about PD (for normal ideals $I,J$):
\begin{Facts}
  \begin{itemize}
    \item In the pressing down games,
      we can assume without loss of generality that nonempty
      chooses at stage $n$ a set of the form $S_n=f_n^{-1}(\alpha_n)\cap
      S_{n-1}$ for
      some $\alpha_n$.\footnote{This of course means: PD is equivalent to the
      game where nonempty is restricted to moves of this form.}
    \item
      $\PDIne$ is monotone in the following sense: if $J\supseteq I$, then
      $\PDIne(J)$ is stronger than $\PDIne(I)$.
      The same holds for $\PDzne$, but not for
      $\PDIe$ nor $\PDze$ nor for any of
      the Banach Mazur games.
    \item In particular, $\PDIne(I)$ is stronger than $\PDIne(\NS_\kappa)$ 
      for all normal $I$.
    \item Just as in the case of $\BMIe$, $\propb(\PDzne)$ cannot hold
      for $\kappa=\al1$ (cf.~\ref{lem:bPDgtc}). 
    \item Other than
      in the case of $\BMze$, the property $\propa(\PDIe)$ has no
      consistency strength (cf.~\ref{lem:jhw3t23}). 
  \end{itemize}
\end{Facts}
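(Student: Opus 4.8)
The plan is to exhibit, over an arbitrary ground model (hence with no large-cardinal hypothesis), a normal ${<}\kappa$-complete ideal $I$ with $\propa(\PDIe(I))$; since $\propa(\BMze(I))$ is equivalent to ``$I$ is precipitous'' and hence requires $\kappa$ to be measurable in an inner model, this gives the promised contrast. The cardinal and the ideal are essentially dictated. If $E^\kappa_{\al0}\notin I$, then empty wins $\PDIe(I)$: empty first plays $S_{-1}=E^\kappa_{\al0}$ and then $f_n(\alpha)=c_\alpha(n)$ for fixed cofinal maps $c_\alpha\colon\omega\to\alpha$, so (using the first Fact on PD) homogeneity forces $c_\alpha(n)=\gamma_n$ on $S_n$, whence $\bigcap_n S_n\subseteq\{\sup_n\gamma_n\}\in I$. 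So $\propa(\PDIe(I))$ requires $E^\kappa_{\al0}\in I$, and in particular $\kappa\geq\al2$ (on $\al1$ every nontrivial normal ideal has $E^{\al1}_{\al0}$ positive). I would therefore take $\kappa=\al2$ and the smallest such ideal $I=\NS_{\al2}\restriction E^{\al2}_{\al1}$, working in a ground model of CH — which has no consistency strength (for instance, use $L$).

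To show empty does not win $\PDIe(I)$, fix a strategy $\sigma$ of empty. By the first Fact on PD we may restrict to runs in which nonempty's $n$-th move is a single ordinal $\gamma_n$ with $S_n=f_n^{-1}(\gamma_n)\cap S_{n-1}$, and we may assume $S_{-1}\subseteq E^{\al2}_{\al1}$. Pick an elementary submodel $N\esm H(\chi)$ ($\chi$ large) with $\al1\subseteq N$, $|N|=\al1$, $N$ closed under $\omega$-sequences, $\sigma,S_{-1}\in N$, and $\delta:=N\cap\al2\in S_{-1}$; such $N$ exist by a standard argument (this is where CH is used: the realizable $\delta$'s of cofinality $\al1$ contain $E^{\al2}_{\al1}\cap C$ for a club $C$, which meets the stationary set $S_{-1}$). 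Now let nonempty play $\gamma_n:=f_n(\delta)$ against $\sigma$, where $f_n=\sigma(\langle\gamma_0,\dots,\gamma_{n-1}\rangle)$. Since $f_n$ is regressive, $\gamma_n=f_n(\delta)<\delta=N\cap\al2$, so $\gamma_n\in N$; inductively $f_{n+1}\in N$, hence each $S_n\in N$, and by closure under $\omega$-sequences also $\bigcap_n S_n\in N$. By construction $\delta\in f_n^{-1}(\gamma_n)\cap S_{n-1}$ for every $n$, so $\delta\in\bigcap_n S_n$.

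It remains to verify that nonempty's moves, and the final intersection, are \emph{stationary}, not merely nonempty — and this is exactly what the choice of $N$ provides, via the usual ``catch your tail'' argument: if $A\in N$ is a nonstationary subset of $\al2$, a witnessing club $D$ can be taken inside $N$, so $D\cap N$ is cofinal in $\delta$, hence $\delta$ is a limit point of $D$, hence $\delta\in D$ and $\delta\notin A$. Applying this with $A=S_n$ and with $A=\bigcap_n S_n$ (both lie in $N$ and both contain $\delta$) shows all of these are stationary; so nonempty's run is legal and is won by nonempty, and $\sigma$ is not winning. Hence $\propa(\PDIe(\NS_{\al2}\restriction E^{\al2}_{\al1}))$ holds under CH, so $\propa(\PDIe)$ has no consistency strength. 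The argument uses only Fodor's lemma and an elementary-submodel reflection, no inner-model theory — which is the contrast with $\propa(\BMze(I))$. The step I expect to cost the most care is the bookkeeping keeping the run inside $N$ while simultaneously keeping every $S_n$ and the intersection $\bigcap_n S_n$ stationary (and legal); a secondary question is how much of CH is genuinely needed, but that is immaterial for the consistency-strength statement.
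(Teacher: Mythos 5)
Your proposal is correct, but it only engages with the last bullet of the Facts block (the first three are indeed immediate, and the fourth is delegated to Lemma~\ref{lem:bPDgtc}, which you do not discuss); for that last bullet your argument is sound and takes a genuinely different route from the paper's. The paper proves $\propa(\PDIe(I))$ via Lemma~\ref{lem:ar2}: it unravels empty's entire strategy into a tree indexed by $\kappa^{<\omega}$, assigns to each ordinal $\gamma$ the unique branch $b_\gamma$ it survives along, observes that regressivity forces $b_\gamma\in\gamma^{\omega}$, uses $\lambda^{\al0}<\kappa$ to code branches by a regressive function on a club, and applies Fodor to find a positive set following a single branch --- contradicting the fact that every branch's intersection is null. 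Your argument is instead local: a single run is steered by an internally $\omega$-closed $N\esm H(\chi)$ of size $\al1$ with $\delta=N\cap\al2\in S_{-1}$, regressivity keeps the run inside $N$, and end-reflection at $\delta$ certifies that every $S_n$ and the final intersection are stationary. Both proofs consume CH in essentially the same place (coding $\omega$-sequences below $\al2$), and both are purely combinatorial, which is the intended contrast with $\propa(\BMze)$. The tree/Fodor argument buys more generality: it works for any normal $I$ concentrated on $E^{\kappa}_{>\omega}$ whenever $\lambda^{\al0}<\kappa$ for all $\lambda<\kappa$, which is what Lemma~\ref{lem:jhw3t23}(3) actually asserts (``for \emph{every} $I$ concentrated on $E^{\al2}_{\al1}$''). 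Your submodel argument is tied to $I=\NS_{\al2}\restriction E^{\al2}_{\al1}$: the legality of nonempty's moves is certified only by reflection to $\delta$, which yields stationarity but not $I$-positivity for a larger normal $I$, so for a general such $I$ the constructed run could become illegal. This is enough for the Facts item as stated (one ideal witnesses that $\propa(\PDIe)$ carries no consistency strength), but it does not recover the full referenced lemma.

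Two small points worth tightening: your phrase ``we may assume $S_{-1}\subseteq E^{\al2}_{\al1}$'' is not something nonempty gets to arrange (empty picks $S_{-1}$); it is harmless because $\delta\in E^{\al2}_{\al1}$ automatically and you can run the reflection argument on $S_n\cap E^{\al2}_{\al1}\in N$ instead. Also, Fodor's lemma is not actually used anywhere in your argument (it is used in the paper's), so the closing sentence slightly misdescribes your own proof.
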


What is the effect of empty moving first?
\begin{Facts}\label{facts:density}
  \begin{itemize}
    \item For the Banach-Mazur games, the distinction whether empty has the
      first
      move or nonempty is a simple density effect: For example, nonempty wins
      $\BMIne(I)$ iff there is some $S\in I^+$ such that nonempty  wins
      $\BMIe(I\restriction S)$; similarly simple equivalences hold for empty
      winning; for characterizing
      $\BMIe$ in terms of $\BMIne$; and for the ${\neq}\emptyset$ version.
    \item We will see in Lemma~\ref{lem:akjwrt} that this is not the case for the pressing down
      game.
  \end{itemize}
\end{Facts}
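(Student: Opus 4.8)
The plan is to prove the one equivalence that is spelled out explicitly — nonempty wins $\BMIne(I)$ iff nonempty wins $\BMIe(I\restriction S)$ for some $S\in I^+$ — by a direct strategy-copying argument, and then to note that the other ``similarly simple'' equivalences come out of the same bookkeeping. The one structural observation I would record first is purely about restrictions: for $S\in I^+$ and $A\subseteq S$ one has $A\in I$ iff $A\in I\restriction S$, and more generally $A\in (I\restriction S)^+$ iff $A\cap S\notin I$; also $I\restriction S$ is again a normal ${<}\kappa$-complete ideal, so we stay inside the class of ideals under consideration. Consequently, once both players have been confined below a fixed positive set $S$, a run of $\BMIe(I)$ all of whose moves are subsets of $S$ is literally a run of $\BMIe(I\restriction S)$ — same legal moves, and, since every move sits inside $S$, the same tail $\bigcap_n S_n$ and the same verdict on whether that tail lies in the ideal.

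For the direction from the restricted game: given a winning strategy for nonempty in $\BMIe(I\restriction S)$, in $\BMIne(I)$ nonempty opens with $S_0:=S$ and thereafter copies that strategy, reading each move $X\subseteq S_{n-1}\subseteq S$ of empty as a move of empty in $\BMIe(I\restriction S)$ (legal there, since $X\subseteq S$ and $X\notin I$ give $X\in (I\restriction S)^+$). All later moves are subsets of $S$, so $\bigcap_n S_n=\bigcap_{n\ge 1}S_n\notin I\restriction S$, hence $\notin I$, and nonempty wins. Conversely, if $\sigma$ is a winning strategy for nonempty in $\BMIne(I)$ and $S$ is nonempty's forced opening move under $\sigma$, then to win $\BMIe(I\restriction S)$ nonempty, confronted with a move $X$ of empty (possibly not a subset of $S$), passes $X\cap S$ — which is $I$-positive and a subset of $S=S_0$, hence a legal move for empty in the simulated $\BMIne(I)$ run — to $\sigma$ and plays $\sigma$'s answer; that answer is automatically a subset of $S$ and of $X$, so it is legal, and the tail verdict transfers exactly as before.

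The remaining assertions are the same argument with the opening move quantified appropriately, each time using the harmless normalisation that a player may be assumed to keep its moves inside whatever positive set it is currently confined to (replace $X$ by $X\cap S$): empty wins $\BMIne(I)$ iff empty wins $\BMIe(I\restriction S)$ for every $S\in I^+$ (nonempty's opening move ranges over all of $I^+$); nonempty wins $\BMIe(I)$ iff nonempty wins $\BMIne(I\restriction X)$ for every $X\in I^+$ (empty's opening move ranges over $I^+$, after which the tail is the ``nonempty first'' game on $I\restriction X$), and feeding this into the first equivalence yields the stated density reformulation (the $S$ with nonempty winning $\BMIe(I\restriction S)$ form a dense subset of $I^+$); and replacing ``$\in I$'' by ``$=\emptyset$'' verbatim throughout handles the ${\neq}\emptyset$ versions, since $S_0=S\supseteq S_n$ still gives $\bigcap_{n\ge 1}S_n=\bigcap_{n\ge 0}S_n$. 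The only point that requires care — and essentially the only obstacle — is precisely that asymmetry: in $\BMIe(I\restriction S)$ empty may legally pick positive sets that are not contained in $S$, whereas the simulated ``nonempty first'' run is pinned inside $S$ by nonempty's opening move; the remedy, intersecting empty's move with $S$ before consulting the stored strategy, is legitimate exactly because $(I\restriction S)$-positivity of $X$ is the condition $X\cap S\notin I$.
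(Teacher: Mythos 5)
Your argument is correct, and it is exactly the standard strategy-copying/normalisation argument that the paper has in mind when it states this as a ``Fact'' without proof (citing only the folklore literature on Banach--Mazur games on Boolean algebras, where $\BMIe(I\restriction S)$ is the game below the element $[S]$ of $\pow(\kappa)/I$). The one point that genuinely needs care --- that empty's moves in $\BMIe(I\restriction S)$ need not be subsets of $S$, remedied by intersecting with $S$, which is harmless because $(I\restriction S)$-positivity of $X$ means $X\cap S\notin I$ --- is identified and handled correctly, so there is nothing to add.
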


The ${\notin}I$ versions of BM and PD are in fact instances of the cut and
choose game introduced by Jech \cite{MR739910} (and its ancestor, the Ulam
game):

\begin{Def}
  The {\bf cut and choose} game $\cac(B,\lambda)$
  on a Boolean algebra $B$ is played as follows:
  First empty chooses  a nonzero element $a_0$ of $B$.
  At stage $n$, empty chooses
  a maximal antichain $A_n$ below $a_n$ of size at most $\lambda$,
  and nonempty chooses an element $a_{n+1}$ from $A_n$.
  Nonempty wins the run if there is some nonzero $b$ below
  all $a_n$.\\
  $\cac(B,\infty)$ is played without restriction on the size of
  the antichains.
\end{Def}

Let $\ro(B)$ denote the completion of the Boolean algebra $B$, and set $B_I=\pow(\kappa)/I$.
The following can be found, e.g.,
in~\cite{MR739911,MR716633,MR846604,MR1366514}:
\begin{Facts}\label{facts:fa732}
  \begin{itemize}
    \item 
        $\cac(B,\infty)$ is equivalent to $\cac(\ro(B),\infty)$.\footnote{%
        But $\cac(B,\lambda)$ will generally not be equivalent to
        $\cac(\ro(B),\lambda)$.}
    \item 
      $\cac(B,\infty)$ is equivalent to the Banach Mazur game on $B$. 
    \item In particular,
      $\cac(B_I,\infty)$ is equivalent to $\BMIe(I)$.
    \item
      $\cac(B_I,\kappa)$ is equivalent to $\PDIe(I)$, cf.~\ref{lem:gurktu}.
      \\
      (However $\cac(\ro(B_I),\kappa)$ might be a stronger game.)
  \end{itemize}
\end{Facts}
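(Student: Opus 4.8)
The plan is to reduce the last two items to the second one and then to concentrate on the first two. The third item is immediate: applying the second item with $B=B_I$ shows that $\cac(B_I,\infty)$ is equivalent to the Banach Mazur game on $B_I$, which (since $I$ is $\sigma$-complete) was already observed to be equivalent to $\BMIe(I)$. The fourth item is the content of Lemma~\ref{lem:gurktu}; the translation is that a maximal antichain below a positive set $A$ of size at most $\kappa$ is, modulo $I$, a partition of $A$ into at most $\kappa$ positive pieces, and by normality these pieces are (up to $I$) the fibres $f^{-1}(\alpha)\cap A$ of a regressive $f$, so that empty's ``cut'' becomes empty's regressive function and nonempty's ``choice'' becomes the homogeneous set $S_n=f_n^{-1}(\alpha_n)\cap S_{n-1}$ of the normal form for the pressing down game recorded above. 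Thus the real work lies in the first two items, and in both the two games being compared have the \emph{same} winning condition for nonempty, namely that the descending sequence of moves admits a nonzero lower bound; I would prove each equivalence by mutual simulation of strategies.

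For the first item I would use that $B$ is dense in $\ro(B)$: a maximal antichain of $B$ below an element is still maximal in $\ro(B)$, and conversely every maximal antichain of $\ro(B)$ can be refined below each of its elements to a maximal antichain lying in $B$; moreover, by density a nonzero lower bound in $\ro(B)$ of elements of $B$ can be shrunk to a nonzero element of $B$, while lower bounds in $B$ lift trivially. Since $\infty$ imposes no bound on antichain size, both players may refine all their moves into $B$ for free, so a strategy in either game is copied into the other: empty's antichains are refined into $B$, nonempty's picks are transferred unchanged, and the payoff is preserved by the density remarks. This gives equivalence in both the $\propa$ and the $\propb$ directions.

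For the second item three of the four implications are straightforward, and all use the same ``meet'' idea. If empty wins $\cac(B,\infty)$ with $\sigma$, then in the Banach Mazur game empty answers nonempty's shrink $a$ by choosing a cell $c$ of the shadow antichain compatible with $a$ (one exists since the antichain is maximal) and playing $a\wedge c$; the shadow $\cac$-play then has no nonzero lower bound, hence neither does the longer Banach Mazur play. Dually, if nonempty wins the Banach Mazur game with $\tau$, then in $\cac(B,\infty)$ nonempty answers empty's maximal antichain by picking a cell compatible with the shadow response and feeding the meet back to $\tau$ as empty's shadow move. Finally, if empty wins the Banach Mazur game with $\sigma$, then in $\cac(B,\infty)$ empty can \textbf{pack} each antichain with strategically good elements $c\le\sigma(\text{hist},d)$ for suitable legal $d$; these are dense below the current element (take $d$ to be that element itself), so a maximal antichain of them exists, and whichever cell nonempty picks continues a losing shadow play. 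Together these yield $\propa(\cac(B,\infty))\leftrightarrow\propa(\BMIe(B))$ and the implication $\propb(\BMIe(B))\rightarrow\propb(\cac(B,\infty))$.

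The remaining implication $\propb(\cac(B,\infty))\rightarrow\propb(\BMIe(B))$ is the main obstacle, and the asymmetry is real: the packing trick is available to \emph{empty} precisely because empty controls the antichains in $\cac(B,\infty)$, whereas here nonempty must win the Banach Mazur game from a $\cac$-strategy $\tau$ over whose cells it has no direct control. The naive simulation breaks because, after nonempty plays a $\cac$-cell $c$ as its Banach Mazur move and empty shrinks to some $a\le c$, the next antichain must be taken below $c$ and $\tau$ may pick a cell disjoint from $a$, leaving the two plays in incomparable regions. To get around this I would extract from the winning strategy $\tau$ the structural fact that the cells reachable by $\tau$-consistent plays are dense below any reachable element, and use this density (a fusion-style argument along the tree of $\tau$-plays) to let nonempty always find a $\tau$-reachable cell below empty's latest move. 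This is exactly the substantive step, and it is the one for which I would lean on the cited references; combining it with the three clean implications and the third item completes all four facts.
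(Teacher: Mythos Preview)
The paper does not give its own proof of this block: items 1--3 are stated as known facts with references to the literature, and item~4 is deferred to Lemma~\ref{lem:gurktu} (which is proved later). So there is no in-paper argument to compare against beyond that cross-reference; your write-up already goes further than the paper by actually sketching proofs.

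Your reductions of items~3 and~4 to item~2 and to Lemma~\ref{lem:gurktu} are correct and match how the paper organises things. For items~1 and~2 your ``meet'' and ``packing'' arguments for the three easy implications (both $\propa$ directions and $\propb(\BMIe(B))\rightarrow\propb(\cac(B,\infty))$) are fine, and you correctly isolate $\propb(\cac(B,\infty))\rightarrow\propb(\BMIe(B))$ as the one direction where the naive simulation breaks.

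There is, however, a genuine gap in your proposed fix for that direction. You assert that ``the cells reachable by $\tau$-consistent plays are dense below any reachable element'' and plan a fusion along the $\tau$-tree. If ``reachable'' is taken globally (over all choices of empty's opening $a_0$), the density is trivial---just start a fresh play with $a_0=d$---but that does not help, because your BM simulation must extend a \emph{fixed} shadow $\cac$-position $P$ below empty's current BM move $X$. The needed statement is: from any $\tau$-position $P$ ending in $a$, and any nonzero $d\le a$, some $\tau$-extension of $P$ ends below $d$. This can fail: for antichains refining $\{d,a\setminus d\}$, a winning $\tau$ may always select the $a\setminus d$ side and still produce a nonzero lower bound there, so from $P$ the strategy simply never enters $d$. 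Hence the density you invoke is not available in the form you need, and the fusion sketch as written does not close the gap. Since you ultimately defer this step to the cited references---exactly as the paper does---your overall treatment is acceptable, but you should not present the density claim as the mechanism; either drop the sketch and cite, or replace it by the actual argument from the references.
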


It is less clear how the ${\neq}\emptyset$-versions of BM and PD relate
to possible set-versions of the cut-and-choose game. On natural
candidate is a ``set-partition'' game:
\begin{Def}\label{def:Gmin}
  \begin{itemize}
    \item $\Gmin(I,\lambda)$ is played as follows: First, empty chooses 
      some  positive $S_{-1}$. At stage $n$, empty partitions the set
      $S_{n-1}$ into at most $\lambda$ many (arbitrary) pieces, and nonempty
      chooses an $I$-positive%
\footnote{Note that we allow empty to include $I$-null pieces into the
partition, but we require nonempty to choose a positive piece; otherwise
nonempty always wins by picking right from the start an element $\alpha$ and
then always picking the piece containing $\alpha$.}
      piece $S_n$.
      Empty wins the run iff $\bigcap_{n\in\omega} S_n=\emptyset$.
    \item In $\Gmin(I,{<}\kappa)$
      empty cuts into less than $\kappa$ many (arbitrary) pieces.
    \item $\Gminne$ is defined as usual, i.e., $S_{-1}=\kappa$
  \end{itemize}
\end{Def}

The following is straigthforward:
\begin{Fact}
  $\PDze(I)$ is stronger than $\Gmin(I,{<}\kappa)$,
  and $\Gmin(I,{<}\kappa)$ is stronger than $\Gmin(I,2)$.
\end{Fact}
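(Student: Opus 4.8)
The plan is to prove each of the two assertions by a ``subgame'' argument. Recall that ``$G$ is stronger than $H$'' means $\propb(G)\to\propb(H)$ together with $\propa(G)\to\propa(H)$, the second being equivalent to ``empty wins $H$ $\Rightarrow$ empty wins $G$''. So in each case one must transfer a winning strategy for nonempty in one direction and a winning strategy for empty in the other.

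That $\Gmin(I,{<}\kappa)$ is stronger than $\Gmin(I,2)$ is immediate and needs no translation: since $\kappa$ is uncountable we have $2<\kappa$, so a partition into two pieces is in particular a partition into ${<}\kappa$ pieces. Hence every run of $\Gmin(I,2)$ is literally a run of $\Gmin(I,{<}\kappa)$ in which empty always splits into two pieces, with the same winning condition. Therefore a winning strategy for nonempty in $\Gmin(I,{<}\kappa)$ restricts to one in $\Gmin(I,2)$, and a winning strategy for empty in $\Gmin(I,2)$ is already one in $\Gmin(I,{<}\kappa)$ (empty simply never uses the extra moves available to it); this gives both implications.

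For $\PDze(I)$ stronger than $\Gmin(I,{<}\kappa)$ the key is to code one move of empty in $\Gmin(I,{<}\kappa)$ by one move of empty in $\PDze(I)$. Suppose empty must partition a positive set $S$ as $S=\bigcup_{i<\theta}P_i$ with $\theta<\kappa$. Put $C:=S\cap(\theta+1)$; as $C$ is a bounded subset of $\kappa$ and $I$ is ${<}\kappa$-complete and contains all bounded sets, $C\in I$. Define $f\colon\kappa\to\kappa$ by $f(\gamma)=i+1$ if $\gamma\in P_i\setminus C$ for some $i<\theta$, and $f(\gamma)=0$ otherwise (for $\gamma>0$). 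Then $f$ is regressive: if $\gamma\in P_i\setminus C$ then $\gamma\geq\theta+1>i+1=f(\gamma)$. Moreover $f^{-1}(0)\cap S=C\in I$ while $f^{-1}(i+1)=P_i\setminus C\subseteq P_i$, so every $f$-homogeneous positive $T\subseteq S$ is contained in some $P_i$ (the only alternative, $T\subseteq f^{-1}(0)\cap S=C\in I$, contradicts $T\notin I$). This coding is exactly where the restriction to ${<}\kappa$ many pieces is used, and it is the only slightly delicate point in the proof: an arbitrary $\kappa$-partition could place small ordinals in high-index pieces and could not be coded by a regressive function. Everything below is routine parallel-play bookkeeping.

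Now play two games in parallel. For $\propb(\PDze(I))\to\propb(\Gmin(I,{<}\kappa))$: given a winning strategy for nonempty in $\PDze(I)$, let nonempty play $\Gmin(I,{<}\kappa)$ while maintaining a shadow run of $\PDze(I)$ in which shadow-empty copies the initial positive set, replaces each partition move of the real opponent by the regressive function $f$ constructed above, and shadow-nonempty follows its $\PDze$-strategy; when that strategy answers with a positive $f$-homogeneous $T\subseteq P_i$, nonempty plays $P_i$ in the real game. At every stage the real $\Gmin$-set contains the shadow $\PDze$-set, and the shadow run is won by nonempty, so its intersection — hence the real one — is nonempty and nonempty wins. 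For the empty direction one argues symmetrically: from a winning strategy $\tau$ for empty in $\Gmin(I,{<}\kappa)$, empty plays $\PDze(I)$ by copying $\tau$'s initial set, playing the regressive function coding each partition $\tau$ prescribes, and reading any positive $f$-homogeneous set nonempty returns as ``nonempty chose the piece $P_i$ containing it'' in order to keep feeding $\tau$; again the shadow $\Gmin$-set contains the real $\PDze$-set, the shadow run has empty intersection (it is won by empty), so the $\PDze$-run does too and empty wins. In both directions one checks — using that the leftover fiber $f^{-1}(0)\cap S$ is $I$-null, so nonempty can never legally land there — that legal moves are always sent to legal moves, which together with the inclusion of the current sets is all the winning conditions need.
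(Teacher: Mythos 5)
Your proof is correct, and it is essentially the argument the paper has in mind: the paper states this Fact without proof (``straightforward''), and your coding of a ${<}\kappa$-partition by a regressive function via $f(\gamma)=i+1$ on $P_i$ minus a bounded garbage set is exactly the device the paper itself uses in Lemma~\ref{lem:gurktu} to identify $\PDIe(I)$ with $\cac(B_I,\kappa)$ (there normality is needed because the antichain may have size $\kappa$, whereas your observation that ${<}\kappa$ pieces only require ${<}\kappa$-completeness plus bounded sets being null is the right simplification here). The parallel-play bookkeeping in both directions, including the check that a positive homogeneous set can never land in the $I$-null fiber $f^{-1}(0)\cap S$, is complete and correct.
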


\begin{Rem}
  Another variant: Empty has to partition into {\em positive} pieces
  (of size at most $\lambda$),
  and wins a run iff the intersection is in $I$. Let us call this
  game $\cacset(I,\lambda)$ (we will not need it in the rest of the paper).
  It is not entirely clear how this game relates to the previous ones:
  \\
  Obviously there can be at most $\kappa$ many pieces, so
  $\cacset(I,\infty)=\cacset(I,\kappa)$. 
  \\
  For $\lambda<\kappa$ it is easy
  to see that $\cacset(I,\lambda)$ is equivalent to $\cac(B_I,\lambda)$.
  \\
  Also, it is clear that $\cacset(I,\kappa)$ is stronger than
  $\cac(B_I,\kappa)$, which is equivalent to $\PDIe(I)$.
  \\
  The relation of $\cacset(I,\kappa)$ and $\BMIe(I)$ is less clear.
  Of course, 
  if $I$ is $\kappa^+$-saturated, then $\cacset(I,\kappa)$,
  $\cac(B_I,\kappa)$, $\PDIe(I)$ and $\BMIe(I)$ are all equivalent,
  cf.~\ref{lem:gurktu} and~\ref{cor:satpdbm}.
\end{Rem}

Winning strategies for games on a Boolean algebra $B$ have close connections to
the properties of $B$ as Boolean algebra and as forcing notions, again
see~\cite{MR739911,MR716633,MR846604,MR1366514}:
\begin{Facts}
  \begin{itemize}
    \item $B$ having a $\sigma$-closed positive subset
        implies $\propb(\BMIe(B))$.
    \item $\propb(\BMIe(B))$ is also denoted by
        ``$B$ is strategically $\sigma$-closed'' and
        implies that $B$ is proper.
    \item $\propa(\BMIe(B))$ is equivalent
        to ``$B$ is $\sigma$-distributive''.
  \end{itemize}
\end{Facts}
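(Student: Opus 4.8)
The plan is to take the three clauses in turn, the first and the last carrying the real content. For the first clause I read ``$\sigma$-closed positive subset of $B$'' as a dense $D\subseteq B\setminus\{0\}$ in which every descending $\omega$-chain has a lower bound lying in $D$; then nonempty wins $\BMIe(B)$ by the obvious strategy. Whenever empty has just played $a_{2k}$, nonempty answers with some $a_{2k+1}\in D$ with $a_{2k+1}\le a_{2k}$, possible by density. The chain $a_1\ge a_3\ge a_5\ge\cdots$ then lies in $D$, so it has a lower bound $b\in D$; since $b\le a_{2k+1}\le a_{2k}$ for every $k$, $b$ is a nonzero element below all $a_n$, so nonempty wins the run. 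There is nothing hard here.

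For the second clause, ``$B$ is strategically $\sigma$-closed'' is simply a name for $\propb(\BMIe(B))$, so only properness needs an argument, which I would give directly. Fix a winning strategy $\sigma$ for nonempty, a large regular $\theta$, a countable $M\esm H_\theta$ with $B,\sigma\in M$, and a nonzero $p\in B\cap M$; enumerate the (countably many) dense subsets of $B$ lying in $M$ as $\langle D_k:k<\omega\rangle$. Now play a run of $\BMIe(B)$ inside $M$: let empty play $a_0:=p$ (legal, since empty moves first and may choose any nonzero element), and given the $\sigma$-dictated move $a_{2k+1}$ --- which lies in $M$ since $\sigma,a_0,\dots,a_{2k}\in M$ --- let empty use elementarity of $M$ to play some $a_{2k+2}\in D_k\cap M$ with $a_{2k+2}\le a_{2k+1}$. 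All moves lie in $M$ and the run follows $\sigma$, so nonempty wins it: there is a nonzero $q$ below all $a_n$. Then $q\le a_0=p$, and for each dense $D\in M$, say $D=D_k$, every extension of $q$ extends $a_{2k+2}\in D\cap M$, so $D\cap M$ is predense below $q$; thus $q$ is an $(M,B)$-generic condition below $p$. Hence $B$ is proper.

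For the third clause I would first replace $B$ by its completion $\ro(B)$ --- harmless, since $\BMIe(B)$ and $\BMIe(\ro(B))$ are equivalent by~\ref{facts:fa732} --- and prove the two implications separately. If $B$ is \emph{not} $\sigma$-distributive, fix a nonzero $a$ and maximal antichains $\langle A_n:n<\omega\rangle$ with no common refinement below $a$ (no nonzero element below $a$ lies below a member of every $A_n$); then empty wins $\BMIe(B)$ by playing $a_0:=a$ and, after each move $a_{2k+1}$ of nonempty, thinning to $a_{2k+2}:=c\wedge a_{2k+1}$ for some $c\in A_k$ compatible with $a_{2k+1}$, since a nonzero $b$ below all $a_n$ would be below $a$ and below a member of every $A_k$. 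Conversely, suppose $B$ is $\sigma$-distributive and let $\tau$ be any strategy for empty; I will produce a $\tau$-consistent run won by nonempty. Put $a_0:=\tau(\langle\rangle)$, fix in $V$ a wellorder $<^\ast$ of $B$, and pass to a $(B\restriction a_0)$-generic $G$ over $V$. In $V[G]$ build a $\tau$-run recursively, keeping all of empty's moves in $G$: given $a_{2k}\in G$, the downward-closed set $\{c:\ c\le\tau(\langle a_0,\dots,a_{2k},b\rangle)\text{ for some nonzero }b\le a_{2k}\}$ is dense below $a_{2k}$ and belongs to $V$, so $G$ meets it, yielding $b\le a_{2k}$ with $\tau(\langle a_0,\dots,a_{2k},b\rangle)\in G$; let $a_{2k+1}$ be the $<^\ast$-least such $b$ and $a_{2k+2}:=\tau(\langle a_0,\dots,a_{2k+1}\rangle)\in G$. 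The run $\langle a_n:n<\omega\rangle$ is definable in $V[G]$ from $G,\tau,<^\ast$, and since a $\sigma$-distributive complete Boolean algebra adds no new $\omega$-sequences of ground model elements, $\langle a_n\rangle\in V$. Moreover $\bigwedge_k a_{2k}\ne 0$, for otherwise $\{a_{2k}\setminus a_{2k+2}:k<\omega\}\setminus\{0\}$ would be a countable maximal antichain below $a_0$ lying in $V$, hence met by $G$, contradicting $a_{2k}\in G$ for all $k$. So back in $V$ the run $\langle a_n\rangle$ is consistent with $\tau$ and won by nonempty (the nonzero $\bigwedge_k a_{2k}$ is below every $a_n$), so $\tau$ is not winning; as $\tau$ was arbitrary, $\propa(\BMIe(B))$ holds.

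The hard part is this last implication: the other direction of the third clause and the first two clauses are all direct, but ``$\sigma$-distributive $\Rightarrow$ empty cannot win'' seems to need the detour through a generic extension (or an equivalent diagonalization against an enumeration of the dense sets in a countable $M\esm H_\theta$), and one has to be careful both that the run is built canonically enough to reflect back into $V$ and that genericity really forces the infimum of empty's moves to be nonzero rather than $0$.
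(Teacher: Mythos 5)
The paper does not actually prove these three items: they are stated as a \emph{Facts} environment with pointers to the literature (Jech et al.), and the only related argument in the paper is the remark that $\propb(\BMIe(B))\Rightarrow$ proper is ``cf.\ \cite[Thm.\ 7]{MR739910}''. So there is no in-paper proof to compare against line by line; what you have supplied are the standard arguments from those references, and they are correct. A few points worth recording. First, your reading of ``$\sigma$-closed positive subset'' as a $\sigma$-closed \emph{dense} set of nonzero elements is the right one (and the necessary one --- without density the claim fails, e.g.\ for $D=\{1\}$); this matches the paper's later usage ``the family of positive sets has a $\sigma$-closed dense subset''. Second, your properness argument is exactly Jech's: the only thing to watch is that the enumeration $\langle D_k\rangle$ lives outside $M$ while all \emph{moves} stay in $M$, which you handle correctly, and the conclusion that $D_k\cap M$ is predense below $q$ because $q\le a_{2k+2}\in D_k\cap M$ is right. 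Third, for the distributivity clause you correctly isolate the one nontrivial direction; your generic-extension argument is sound: the finite initial segments are ground-model objects, so the relevant dense sets are in $V$; $(\omega,\infty)$-distributivity of $\ro(B)$ pulls the whole run back into $V$; and the maximal-antichain argument $\{a_{2k}\smallsetminus a_{2k+2}\}_k$ correctly rules out $\bigwedge_k a_{2k}=0$. The passage to $\ro(B)$ at the start is legitimate by the paper's Facts~\ref{facts:fa732}, and is needed so that the infimum witnessing nonempty's win exists. The alternative standard route for this direction (diagonalizing against the dense sets of a countable elementary submodel, as you mention) would work equally well; nothing is gained or lost either way. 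I see no gap.
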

It is not surprising that we will get stronger connections if we assume that
the $B$ has the form $B_I=\pow(\kappa)/I$ for a normal ideal $I$.  
We will mention only one example:
\begin{Fact}
  If $B_I$ is proper and $\kappa>2^{\al0}$, then $\propa(\BMIe(I))$ holds.
\end{Fact}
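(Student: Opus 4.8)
The plan is to invoke the equivalence ``$\propa(\BMIe(B))\leftrightarrow B$ is $\sigma$-distributive'' quoted above, and to show directly that $B_I=\pow(\kappa)/I$ is $\sigma$-distributive, i.e.\ adds no new $\omega$-sequence of ordinals. (Only $<\kappa$-completeness of $I$ is used, not normality.) So let $\dot f$ be a $B_I$-name and $A$ a positive set with $A\forc\dot f\colon\omega\to\ON$. It suffices to find a positive $q\subseteq A$ with $q\forc\dot f\in\check V$; doing this below every such $A$ shows that $\dot f$ is forced into $V$ wherever it is forced to be an $\omega$-sequence of ordinals, hence $B_I$ is $\sigma$-distributive.

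The first step is to use properness of $B_I$ to replace the (possibly huge) antichains that decide the values of $\dot f$ by countable ones. Pick a large regular $\theta$ and a countable $N\prec H_\theta$ with $B_I,I,A,\dot f\in N$ for which there is an $(N,B_I)$-generic condition below $A$ — this is exactly what properness of $B_I$ gives. Let $S\le A$ be positive and $(N,B_I)$-generic. For each $n$ fix, by elementarity inside $N$, a maximal antichain $\mathcal A_n\in N$ below $A$ deciding $\dot f(n)$; then $\mathcal A_n\cap N$ is countable, each $W\in\mathcal A_n\cap N$ forces $\dot f(n)=\gamma_W$ for some specific ordinal $\gamma_W\in N$, and by $(N,B_I)$-genericity $\mathcal A_n\cap N$ is predense below $S$. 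As usual this yields $S\setminus\bigcup(\mathcal A_n\cap N)\in I$, since a positive subset of $S$ disjoint from every member of $\mathcal A_n\cap N$ would violate predensity.

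Now $<\kappa$-completeness of $I$ finishes the argument. Since $\omega<\kappa$, the set $S':=S\setminus\bigcup_{n<\omega}\bigl(S\setminus\bigcup(\mathcal A_n\cap N)\bigr)$ is still positive, and for every $\alpha\in S'$ and every $n$ there is $W\in\mathcal A_n\cap N$ with $\alpha\in W$; choosing one such $W=W^\alpha_n$ defines $g\colon S'\to\prod_{n<\omega}(\mathcal A_n\cap N)$. The target has size at most $\al0^{\al0}=2^{\al0}<\kappa$, so the partition of $S'$ by the value of $g$ has a positive piece $q$; say $g\equiv\langle W^*_n:n<\omega\rangle$ on $q$. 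Then $q\subseteq W^*_n$ for every $n$, hence $q\le W^*_n$ in $B_I$ and $q\forc\dot f(n)=\check\gamma_{W^*_n}$; thus $q\forc\dot f=\check g_0$, where $g_0:=\langle\gamma_{W^*_n}:n<\omega\rangle\in V$, as desired. The one genuine obstacle is the first step: without properness the antichains $\mathcal A_n$ can have size up to $2^\kappa$, so $\prod_n\mathcal A_n$ is far too big for the completeness argument; properness is precisely what lets us pass to the countable predense sets $\mathcal A_n\cap N$ below $S$, and the hypothesis $\kappa>2^{\al0}$ is precisely what keeps $\prod_n(\mathcal A_n\cap N)$ small enough to apply $<\kappa$-completeness.
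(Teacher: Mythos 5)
Your proof is correct, but it takes a genuinely different route from the paper's. You prove outright that $B_I$ is $\sigma$-distributive --- one $(N,B_I)$-generic condition $S$, the countable predense sets $\mathcal{A}_n\cap N$, and a single application of ${<}\kappa$-completeness to the ${\leq}2^{\al0}$-sized partition of $S'$ induced by the choice function $g$ --- and then invoke the quoted equivalence of $\propa(\BMIe(B))$ with ``$B$ is $\sigma$-distributive''. The paper instead refutes a putative winning strategy $\tau$ for empty head-on: it takes an $N$-generic $q$ below empty's first move, observes that the union $\mathcal Y$ of all $I$-null countable intersections of positive sets from $N$ is itself in $I$ (this is where $\kappa>2^{\al0}$ enters, via $|[N]^{\al0}|<\kappa$), picks a point $\delta^*$ in the generic part of $q$ but outside $\mathcal Y$, and steers a run of $\tau$ through $N$ so that $\delta^*$ survives every move, contradicting that $\tau$ wins. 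The two arguments use the hypotheses in exactly the same way: properness to cut antichains from $N$ down to countable predense sets below a generic condition, and $\kappa>2^{\al0}$ to control the $2^{\al0}$ many countable objects arising from $N$. What your version buys is a stronger intermediate conclusion ($B_I$ adds no new $\omega$-sequences of ordinals, not merely that empty lacks a winning strategy), at the price of outsourcing the game-theoretic content to the nontrivial direction of Jech's theorem that $\sigma$-distributivity implies $\propa(\BMIe)$; the paper's argument is self-contained on that point. One cosmetic remark: your $W\in\mathcal A_n\cap N$ are classes mod $I$, so ``$\alpha\in W$'' and ``$q\subseteq W^*_n$'' should be read via representatives chosen in $N$ --- standard and harmless.
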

For a proof, see~\ref{lem:propaBM}.

\section{The results}\label{sec:results}

Some of the facts for precipitous ideals can be shown (with similar proofs)
for PD, but there are of course strong differences as well:
\begin{Lem}\label{lem:jhw3t23}
  \begin{enumerate}
    \item $\propb(\Gminne(I,2))$ implies that $\kappa$ is measurable in an inner
      model. 
    \item So in particular, $\propb(\PDzne(I))$ implies that as well.
    \item However, $\propa(\PDIe(I))$ has no consistency strength.
      In particular, for $\kappa=\al2$,
      $\propa(\PDIe(I))$ is implied by $CH$ for every $I$ concentrated
      on $E^{\al2}_{\al1}$.
    \item $\propb(\PDzne(I))$ implies $\kappa>2^{\al0}$ and
      that $I$ is not concentrated on $E^\kappa_{\al0}$.
  \end{enumerate}
\end{Lem}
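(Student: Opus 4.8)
The plan for (1) is to imitate the classical argument that a precipitous ideal on $\kappa$ produces an inner model with a measurable cardinal, and to check that a winning strategy $\sigma$ for nonempty in $\Gminne(I,2)$ is already enough to push it through. I would assume toward a contradiction that no inner model has a measurable; then the Dodd--Jensen core model $K$ exists, is rigid, satisfies covering, and all of this survives set forcing. Forcing with $B_I=\pow(\kappa)/I$ yields a $V$-normal $V$-ultrafilter $U$ on $\kappa$. The crucial point is that $\sigma$ forces the ordinal part of $\ult(K,U)$ to be well-ordered: a would-be descending sequence $\langle f_n:n<\omega\rangle$ with $f_n\in K$ and $A_n:=\{\alpha:\,f_{n+1}(\alpha)<f_n(\alpha)\}\in U$ for all $n$ is decided by a single condition, and then---having thinned out to that condition---one replays a run of $\Gminne(I,2)$ in which empty offers the binary partitions $\{A_n,\kappa\setminus A_n\}$; nonempty's answers by $\sigma$ produce $I$-positive sets $S_n$ with $\bigcap_n S_n\neq\emptyset$, contradicting $A_n\in U$ for all $n$ (since $\bigcap_n A_n=\emptyset$). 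Hence $j_U\restriction K\colon K\to\ult(K,U)$ is a well-founded elementary embedding, so $\ult(K,U)=K$ by core model theory and $j_U\restriction K$ is a nontrivial elementary self-embedding of $K$---contradicting rigidity. I expect this to be the hardest step of the lemma: making the core-model argument run off a length-$\omega$ game strategy rather than an honest generic embedding, in particular the name-thinning bookkeeping needed before $\sigma$ can be invoked. For (2) I would first prove $\propb(\PDzne(I))\to\propb(\Gminne(I,2))$ by a ``shadow play'': nonempty maintains a parallel run of $\PDzne(I)$ (following its strategy $\sigma'$) whose sets lie below the real ones. If the current shadow set is $T\subseteq S_{n-1}$ and empty partitions $S_{n-1}$ into $\{P,Q\}$, then (after discarding the $I$-null pair $\{0,1\}$) nonempty feeds $\sigma'$ a regressive function whose two fibres on $T$ are $P\cap T$ and $Q\cap T$; $\sigma'$ returns an $I$-positive $T'\subseteq P\cap T$, say, and nonempty plays $P$. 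Since $\bigcap_n T_n\neq\emptyset$, also $\bigcap_n S_n\neq\emptyset$. Then (2) is immediate from (1).

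\textbf{Part (3).} Because $\NS_{\al2}\restriction E^{\al2}_{\al1}$ is a normal ideal concentrated on $E^{\al2}_{\al1}$ and exists in $\mathrm{ZFC}$, the ``in particular'' clause already gives $\mathrm{Con}(\mathrm{ZFC})\to\mathrm{Con}(\mathrm{ZFC}+\exists I\,\propa(\PDIe(I)))$, which is the ``no consistency strength'' assertion; so it suffices to prove the $CH$ claim. Given $CH$ and a strategy $\tau$ for empty in $\PDIe(I)$ with opening move $S_{-1}$, discard an $I$-null set so that $S_{-1}\subseteq E^{\al2}_{\al1}$, and build a continuous $\in$-chain $\langle M_i:i<\al2\rangle$ of elementary submodels of some $H(\theta)$, each of size $\al1$ and closed under $\omega$-sequences, with $\langle M_j:j\le i\rangle\in M_{i+1}$, $\al1\cup\{\tau,I,S_{-1}\}\subseteq M_0$, and (throwing in surjections $\al1\to\eta$ for $\eta\in M_i\cap\al2$) arranged so that $\{i:\,M_i\cap\al2=i\}$ is club. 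The decisive ingredient is a genericity fact for this chain: the set $G$ of $\delta$ with $M_\delta\cap\al2=\delta$ and $\delta\notin A$ for every $A\in I\cap M_\delta$ belongs to $I^\ast$. Indeed, if $\delta\in A\in I\cap M_\delta$ then $A\in M_{g(\delta)}$ for some $g(\delta)<\delta$, so on an $I$-positive set $g$ would be constantly some $i^\ast$, forcing that set into $\bigcup(I\cap M_{i^\ast})$, which is in $I$ since $I$ is ${<}\al2$-complete and $|M_{i^\ast}|<\al2$---absurd. Pick $\delta\in G\cap S_{-1}$ (nonempty, as $S_{-1}$ is $I$-positive); then $\cf(\delta)=\al1$, $M:=M_\delta$ is closed under $\omega$-sequences, and $M\cap\al2=\delta$. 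Now have nonempty always play $\alpha_n:=f_n(\delta)$; this value is $<\delta$, hence in $M$, so the whole play stays in $M$, and $f_n^{-1}(f_n(\delta))\cap S_{n-1}$ is $I$-positive because by normality the set of $\gamma\in S_{n-1}$ whose fibre $f_n^{-1}(f_n(\gamma))\cap S_{n-1}$ is $I$-null is a diagonal union of $I$-sets, hence an element of $I\cap M$, hence avoided by $\delta$. Thus nonempty's moves are legal, $\delta\in\bigcap_n S_n$, and $\bigcap_n S_n\in M$; were $\bigcap_n S_n\in I$, it would lie in $I\cap M$ and so not contain $\delta$---contradiction. So nonempty wins this run, and as $\tau$ was arbitrary, $\propa(\PDIe(I))$.

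\textbf{Part (4).} That $\propb(\PDzne(I))$ forces $\kappa>2^{\al0}$ I would take from Lemma~\ref{lem:bPDgtc} (whose proof has, when $\kappa\le 2^{\al0}$, empty defeat any nonempty strategy by playing regressive functions that ``read off'' a coding of $\kappa$ into ${}^\omega 2$). For the remaining assertion, suppose $I$ is concentrated on $E^\kappa_{\al0}$; since $I$ is normal it contains $\NS_\kappa$, hence $I\supseteq J:=\NS_\kappa\restriction E^\kappa_{\al0}$, and by monotonicity of $\PDzne$ it is enough to refute $\propb(\PDzne(J))$. Against a strategy $\sigma$ for nonempty, empty plays regressive functions built from a system $\langle c_\alpha:\alpha\in E^\kappa_{\al0}\rangle$ of cofinal $\omega$-sequences (roughly $f_n(\alpha)=c_\alpha(n)$); by Fodor nonempty must at stage $n$ choose a stationary fibre $\{\alpha:\,c_\alpha(n)=\beta_n\}$, the $\beta_n$ strictly increase to some $\gamma^\ast<\kappa$, and any $\alpha\in\bigcap_n S_n$ satisfies $c_\alpha(n)=\beta_n$ for all $n$, so $\bigcap_n S_n\subseteq\{\gamma^\ast\}$. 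One then diagonalizes against $\sigma$---using the freedom in the choice of the $c_\alpha$ (equivalently, inserting an auxiliary two-valued partition at each stage to spoil the fibre that would keep the current candidate alive)---so as to force $\gamma^\ast\notin S_0$, whence $\bigcap_n S_n=\emptyset$, contradicting that $\sigma$ wins. Making this diagonalization interact correctly with the strategy is the delicate point of (4).
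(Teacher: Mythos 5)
Your parts (2) and (3) are correct. The shadow-play reduction in (2) is the same observation the paper records as a Fact, and your elementary-submodel proof of (3) is a valid alternative to the paper's Lemma~\ref{lem:ar2}, which instead codes the branch that each $\gamma$ traces through empty's strategy tree by a single regressive function (using $\lambda^{\al0}<\kappa$ to find a club on which the coding is regressive) and applies normality; the paper's version yields the more general statement, yours suffices for the case stated. The genuine gaps are in (1) and in the second half of (4).

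In (1) the well-foundedness step fails as described. First, a single condition of $B_I=\pow(\kappa)/I$ cannot decide the countably many names $\name{f}_n$: the quotient need not be countably closed (if it were strategically closed there would be nothing to prove), so ``having thinned out to that condition'' conceals a decreasing $\omega$-sequence of conditions with no lower bound. Second, and decisively, your run of $\Gminne(I,2)$ is disconnected from the generic ultrafilter $U$: when empty offers $\{A_n\cap S_{n-1},\,S_{n-1}\setminus A_n\}$, the strategy may answer with the complementary piece, so the point of $\bigcap_n S_n$ need not lie in $\bigcap_n A_n$; and ``$A_n\in U$ for all $n$ while $\bigcap_n A_n=\emptyset$'' is not by itself a contradiction --- it is exactly the failure of precipitousness you are not entitled to exclude. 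The paper's Lemma~\ref{lem:bPDgne} repairs precisely this by forcing with the tree $P^*$ of positions played according to nonempty's strategy: there the generic ultrafilter is generated by the sets $A(w)$ along the play, so any condition forcing $\bigcap_{l\le n+1}A_l\in\name U$ extends to a position $w_{n+1}$ with $A(w_{n+1})\subseteq\bigcap_{l\le n+1}A_l$, and the interleaved sequence of decisions and shrinkings is itself a run of the game, whose intersection is nonempty because nonempty wins. The conclusion that $\kappa$ itself (not merely some ordinal) is measurable in an inner model is then drawn in Lemma~\ref{wurst} by the Jech--Prikry construction of an iterable $L[A]$-ultrafilter; your $K$-rigidity route would still owe that localization.

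In (4) the assertion you must actually prove is left unproved: $\gamma^\ast=\sup_n\beta_n$ is determined only by the completed run and the $\beta_n$ depend on empty's earlier moves, so ``diagonalize so that $\gamma^\ast\notin S_0$'' is circular as stated; you flag this as the delicate point but do not resolve it. The paper's Lemma~\ref{lem:uzligutzli} resolves it in two steps: a splitting lemma \eqref{eq:lkwet} (every position has two extensions with disjoint associated sets, proved by applying your sup-of-the-$\beta_i$ trick to a hypothetical non-splitting position), and then an elementary submodel $N\esm H(\chi)$ with $N\cap\kappa=\delta\in E^\kappa_{\al0}$: playing entirely inside $N$, the first move uses splitting to exclude $\delta$ from $A(w_0)$, and the subsequent functions force any survivor of the run to be simultaneously ${\ge}\delta$ and ${\le}\delta$. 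That elementary-submodel localization of $\gamma^\ast$ is the missing idea.
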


The proofs can be found in \ref{lem:bPDgne}, \ref{lem:bPDgtc}
and \ref{lem:uzligutzli}.

In this paper, we are not interested in the property ``empty does not win the
pressing down game'', since it has no consistency strength.
Also, the effect of who moves first in Banach Mazur games is trivial.
The remaining properties are pictured in Figure~\ref{fig:zwei}.
\begin{figure}[tb]
\centerline{\xymatrix@!R=0.1cm{
    \propb(\BMIe)&
    &
    \propb(\PDIe)&
    \propb(\PDIne)&
  \\
    &
    \propa(\BMIe)&
    &
    &
  \\
    \propb(\BMze)&
    &
    \propb(\PDze)&
    \propb(\PDzne)&
  \\
    &
    \propa(\BMze)\equiv\textrm{precip.}&
    &
    \infty\textrm{-semi precipitous}&
  \ar"1,1";"3,1"
  \ar"1,1";"2,2"
  \ar"1,1";"1,3"
  \ar"1,3";"3,3"
  \ar"1,3";"1,4"
  \ar"1,4";"3,4"
  \ar"2,2";"4,2"
  \ar"3,1";"3,3"
  \ar"3,1";"4,2"
  \ar"3,3";"3,4"
  \ar"3,4";"4,4"
  \ar"4,2";"4,4"
}}
\caption{Some properties stronger than $\infty$-semi precipitous}
\label{fig:zwei}
\end{figure}
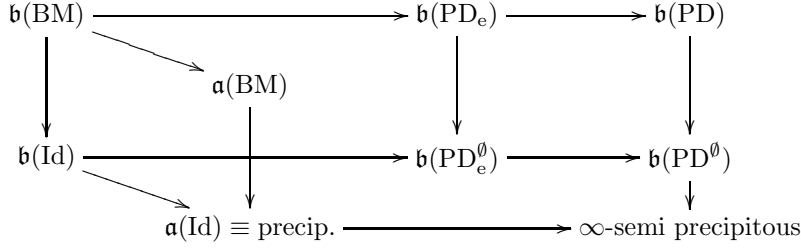
All these properties are equiconsistent to a measurable (e.g., for
$I=\NS_{\al2}\restriction E^{\al2}_{\al1}$).  In fact, they
imply that $I$ is $\infty$-semi precipitous, see
Definition~\ref{def:inftysemi}, which in turn implies that $\kappa$
is measurable in an inner model.
We claim that none of the implications can be reversed. In this paper, we will
prove some strong instances of this claim by assuming larger
cardinals: We show
\begin{itemize}
  \item $\propb(\PDIe)$ does not imply precipitous, and 
  \item $\propa(\BMIe)$ does not imply $\propb(\PDzne)$.
\end{itemize}
We also claim that (consistently relative to a measurable)
\begin{itemize}
  \item $\propb(\BMze)$ does not imply $\propb(\PDIne)$,
\end{itemize}
but we do not give a proof here.  With these claims (for which we assume
cardinals larger than a measurable) it is then easy to check that no
implication of Figure~\ref{fig:zwei} can be reversed.

In \cite{MR2371208},  Pauna and the authors showed that, assuming the
consistency of a measurable, $\propb(\PDIne(I))$ does not imply $\propb(\BMIne(I))$ for
$I=\NS_{\al2}\restriction E^{\al2}_{\al1}$. In fact, a slightly stronger
statement holds (with a simpler proof):
\begin{Lem}\label{lem:burp2}
  It is equiconsistent with a measurable 
  that $\propb(\PDIne(I))$ holds (even for length $\om1$)
  but $\propa(\BMzne(I)))$ fails
  for $I=\NS_{\al2}\restriction E^{\al2}_{\al1}$.
\end{Lem}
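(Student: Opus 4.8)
The lower bound is immediate from the facts already collected: if nonempty wins $\PDIne(I)$ then, since the ${\notin}I$-version dominates the ${\neq}\emptyset$-version, nonempty wins $\PDzne(I)$, so by Lemma~\ref{lem:jhw3t23} $\kappa$ is measurable in an inner model; hence the displayed configuration entails the consistency of a measurable. So the work is the converse: starting from $V\models\text{``}\kappa\text{ is measurable''}$ (and, replacing $V$ by $L[U]$ if convenient, GCH) we must collapse $\kappa$ to $\al2$ so that, in the extension, (a) nonempty has a winning strategy in $\PDIne(\NS_{\al2}\restriction E^{\al2}_{\al1})$ even for runs of length $\om1$, and (b) $I:=\NS_{\al2}\restriction E^{\al2}_{\al1}$ is nowhere precipitous.

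For (a) the natural move is $P=\Levy(\om1,{<}\kappa)$: it is $\sigma$-closed and $\kappa$-c.c., so in $V[G]$ we get $\kappa=\al2$, CH still holds, and every club of $\kappa$ in $V[G]$ contains a ground-model club, hence a $U$-large set for a fixed normal measure $U$. Write $j\colon V\to M=\ult(V,U)$ and use the factorisation $j(P)=\Levy(\om1,{<}j(\kappa))^M\cong P\times Q$ in $M$ with $Q=\Levy(\om1,[\kappa,j(\kappa)))^M$ $\sigma$-closed. Nonempty's strategy: by the usual reduction, answer empty's $f_\alpha$ by $S_\alpha=f_\alpha^{-1}(c_\alpha)\cap S_{\alpha-1}$, where along the run nonempty also builds a descending sequence $\langle q_\alpha:\alpha<\om1\rangle$ in $Q$ with $q_\alpha$ deciding, over $M[G]$, the value $j(\name{f_\alpha})(\check\kappa)=c_\alpha<\kappa$ for a fixed $P$-name $\name{f_\alpha}$ of $f_\alpha$; at successors one extension suffices, and at a limit $\gamma<\om1$ --- which necessarily has cofinality $\omega$, and this is the only place the bound "length $\om1$" is used --- a lower bound of $\langle q_\beta:\beta<\gamma\rangle$ exists by $\sigma$-closure of $Q$. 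One then argues inductively that each $S_\alpha$, and each limit intersection $\bigcap_{\alpha<\gamma}S_\alpha$, is stationary in $V[G]$: a club missing it would contain a ground-model club, hence a $U$-large $A$ with $\kappa\in j(A)$, which contradicts that $\kappa$ has been kept in the relevant $j$-images by the choice of the $c_\alpha$. Thus every run following the strategy lasts $\om1$ steps, i.e.\ $\propb(\PDIne(I))$ even for length $\om1$.

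The difficulty is (b) together with (a): the bare collapse $P=\Levy(\om1,{<}\kappa)$ does \emph{not} suffice, because $U$ induces on $\al2$ a precipitous normal ideal which, restricted to a suitable positive set, agrees there with $\NS_{\al2}$, so $I$ is in fact somewhere precipitous after $P$ alone. The plan is therefore to interleave (or follow) the collapse with a $\sigma$-closed, $\al2$-preserving forcing that destroys precipitousness below \emph{every} stationary subset of $E^{\al2}_{\al1}$ --- uniformly adding the kind of witness to non-precipitousness (a coherent system of maximal antichains admitting no genuine, set-sized branch) --- chosen mild enough that it alters neither $\NS_{\al2}\restriction E^{\al2}_{\al1}$ nor the two ingredients on which the strategy of (a) rests, namely ground-model clubs and the quotient $Q$. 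The main obstacle is exactly this calibration: making (b) strong enough to reach below every stationary subset of $E^{\al2}_{\al1}$ yet gentle enough to preserve nonempty's pressing-down strategy from (a); a secondary point needing care is the inductive stationarity argument in (a), since the $Q$-filter nonempty builds is only partial (meeting the $\om1$ many antichains coming from empty's moves) rather than fully generic over $V[G]$, so the preservation of "$\kappa$ is in the image" at limits must be justified directly from the $\kappa$-c.c.\ reflection of clubs and $\sigma$-closure of $Q$, not by appeal to an honest extension $j^{+}\colon V[G]\to M[G][H]$.
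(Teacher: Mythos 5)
Your lower bound and your part (a) are essentially fine: the paper reaches (a) by a slightly different route (in the extension the $\cl(U)$-positive sets have a $\sigma$-closed dense subset, so nonempty wins $\BMIe$ of the dual ideal and hence, by monotonicity of $\PDIne$, the pressing down game of length $\om1$ on $\NS_{\al2}\restriction E^{\al2}_{\al1}$), whereas you run the master-condition argument by hand; both work, and the subtlety you flag about the partial $Q$-filter is real but repairable. The genuine gap is in part (b). Your premise that the bare Levy collapse ``does not suffice, because $U$ induces on $\al2$ a precipitous normal ideal which, restricted to a suitable positive set, agrees there with $\NS_{\al2}$'' is false, and it is false in exactly the model the paper uses. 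Precipitousness is not monotone in the relevant direction: the dual of $\cl(U)$ being precipitous says nothing about $\NS_{\al2}\restriction S$ being precipitous, because the generic ultrafilter obtained by forcing with $\pow(\al2)/\NS\restriction S$ need not extend the dual of $\cl(U)$. Having started from this false premise, you then only sketch a ``plan'' for an auxiliary forcing and explicitly leave its calibration as an unresolved obstacle, so (b) is not proved.

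The two ingredients you are missing are the ones the paper's proof actually turns on. First, one must work over $V=L[U]$: by Kunen's uniqueness of the normal measure (Lemma~\ref{lem:gurke3}), any normal wellfounded generic $V$-ultrafilter would have to equal $U$, which is impossible for a genuinely generic (hence non-ground-model) filter; therefore $\NS_\kappa\restriction S$ is non-precipitous for \emph{every} stationary $S$ already in $L[U]$ (Corollary~\ref{cor:gurke89}, Corollary~\ref{cor:gurke90}). Second, nowhere precipitousness is witnessed by a decreasing sequence of functionals and is therefore preserved by any $\kappa$-cc forcing (Lemma~\ref{lem:nowhereprec}); since the Levy collapse is $\kappa$-cc, every club of the extension contains a ground-model club, so $\cl(\NS^V_\kappa)=\NS^{V[G]}_{\al2}$ and the nowhere precipitousness transfers. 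With these two facts the bare collapse does suffice, no interleaved forcing is needed, and (a) and (b) coexist automatically --- which is precisely the content of Corollary~\ref{cor:gurk422}.
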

(For a proof, see~\ref{cor:gurk422}.)
Note that ``$\propa(\BMzne(I)))$ fails''
just means that $I$ is nowhere precipitous.

Of course, precipitous cannot generally 
imply a winning strategy for nonempty in any
game, since precipitousness is consistent with $\kappa\leq 2^{\al0}$.
However, we can get counterexamples for $\kappa>2^{\al0}$ as well:
Just adding Cohens destroys any winning strategy for nonempty
(for any ideal on $\al2$), but preserves
precipitous. So we get (see~\ref{lem:fueel} and~\ref{cor:funfz}):
\begin{Lem}\label{lem:burp3}
  It is equiconsistent with a measurable that
  CH holds, $\NS_{\al2}\restriction E^{\al2}_{\al1}$ is precipitous 
  but  $\propb(\PDzne(J))$ fails for any normal ideals $J$ on $\al2$.
\end{Lem}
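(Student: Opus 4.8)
The plan is to start from a measurable cardinal $\mu$, turn it into $\al2$ in a standard way so as to make $\NS_{\al2}\restriction E^{\al2}_{\al1}$ precipitous while $CH$ holds, and then add $\al2$ (or more) Cohen reals to kill all winning strategies for nonempty without destroying precipitousness. For the first part I would quote the classical Jech--Magidor--Mitchell--Prikry style construction (or Mitchell's): collapsing $\mu$ to $\al2$ with the Levy collapse $\Levy(\al1,{<}\mu)$ over a model where GCH holds and $\mu$ is measurable yields a model in which $CH$ holds and $\NS_{\al2}\restriction E^{\al2}_{\al1}$ is precipitous; this is exactly the ``$\NS_{\al2}\restriction E^{\al2}_{\al1}$ is precipitous is equiconsistent with a measurable'' fact already cited in the excerpt, so I may take the ground model $V$ to be such a model. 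For the consistency-strength lower bound I invoke Lemma~\ref{lem:jhw3t23}(2) together with the stated fact that $\propb(\PDzne)$ implies $I$ is $\infty$-semi precipitous, which implies $\kappa$ measurable in an inner model — but here I actually want the \emph{failure} of $\propb(\PDzne(J))$, so no lower bound work is needed beyond noting precipitousness already needs a measurable; equiconsistency is then immediate.

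The heart of the argument is the Cohen-forcing step, which I expect to be the main (though still routine) obstacle. Let $\mathbb{C}=\mathrm{Fn}(\al2,2)$ (or $\mathrm{Fn}(\omega_2\times\omega,2)$) add $\al2$ Cohen reals over $V$. Precipitousness of $I=\NS_{\al2}\restriction E^{\al2}_{\al1}$ is preserved: $\mathbb{C}$ is ccc, hence $\al2$-cc, and preserves stationarity of subsets of $E^{\al2}_{\al1}$ and the relevant generic ultrapower (one checks, as in the standard ``ccc forcing preserves precipitousness'' argument, that the ground-model generic elementary embedding still witnesses well-foundedness of the ultrapower in the extension, using that $\mathbb{C}$ adds no new $\omega$-sequences of ordinals needed to diagonalize the tower — the point is $\mathbb{C}^\omega$ is still ccc so any would-be ill-founded tower can be reflected back). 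For the destruction of winning strategies: suppose toward a contradiction that in $V[\mathbb{C}]$ some normal ideal $J$ on $\al2$ has $\propb(\PDzne(J))$, witnessed by a strategy $\sigma$ for nonempty. By ccc-ness $\sigma$, $J$, and each relevant object has a nice name supported on a set of Cohen coordinates of size $\le\al1$; split the coordinates into $\al1$ blocks $\langle B_\xi:\xi<\al1\rangle$ each of size $\al1$ so that $J$ and $\sigma$ depend only on $\bigcup_\xi B_\xi$ minus enough room, and use the remaining Cohen reals as an oracle: at stage $n$ empty plays the regressive function $f_n(\alpha)=$ ``the $n$-th bit of the Cohen real coded at coordinate-block indexed by $\alpha$'', diagonalizing so that for each candidate $I$-positive set nonempty could return, empty's generic choices force the eventual intersection to be empty. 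More precisely, I would run the standard argument that adding a Cohen real adds a function $g:\al2\to 2^\omega$ such that for every ground-model (indeed every intermediate-model) strategy of nonempty, empty can, using $g$, steer the play so that $\bigcap_n S_n$ has at most one element, and then one further Cohen bit removes that element.

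The order of steps: (1) fix $V\models CH + \mu$ measurable $+$ GCH and Levy-collapse to get $V_1\models CH + \NS_{\al2}\restriction E^{\al2}_{\al1}$ precipitous; (2) force with $\mathbb{C}$ over $V_1$; (3) verify $CH$ persists (trivial, $\mathbb{C}$ is ccc and adds no reals beyond... actually $\mathbb{C}$ adds reals, so take $\mathbb{C}=\mathrm{Fn}(\al2,2)$: then $2^{\al0}=\al2$; if we genuinely need $CH$ in the final model we instead use a forcing that adds Cohen \emph{subsets of $\al1$} rather than reals — i.e. $\mathrm{Fn}(\al2,2,\al1)$, which is $\al1$-closed, $\al2$-cc under CH, preserves CH, still preserves precipitousness, and still adds the diagonalizing function $g:\al2\to 2^{\omega_1}$ whose $\omega$-initial-segments suffice to defeat any nonempty strategy of length $\omega$); (4) verify precipitousness of $I$ survives step (2)–(3); (5) prove no normal $J$ on $\al2$ has $\propb(\PDzne(J))$ by the oracle/diagonalization argument above, citing the analogue in~\cite{MR2371208} or Figure~\ref{fig:zwei}'s framework; (6) read off equiconsistency. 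The main obstacle is step (5) done uniformly over \emph{all} normal ideals $J$ simultaneously rather than a fixed one — this is handled by the fact that a single Cohen-generic function $g$ is ``generic enough'' to diagonalize against every strategy in the extension at once, because any such strategy is, by ccc-ness, already present after adding only an initial segment of the Cohen reals, over which the tail remains Cohen-generic. I expect the detailed verification of precipitousness-preservation and the bookkeeping in the diagonalization to be the only genuinely technical points; everything else is assembly of cited facts.
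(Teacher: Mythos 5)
Your overall architecture matches the paper's: Levy-collapse a measurable to get CH plus a precipitous $\NS_{\al2}\restriction E^{\al2}_{\al1}$, then force with a Cohen-type forcing to kill nonempty's strategies while preserving precipitousness via the $\kappa$-cc preservation lemma, and reduce ``all normal $J$'' to the single ideal $\NS_{\al2}$. (The paper does this last reduction simply by monotonicity of $\PDzne$: every normal $J$ contains $\NS_{\al2}$, so $\propb(\PDzne(J))$ implies $\propb(\PDzne(\NS_{\al2}))$; your ``generic over every intermediate model'' machinery is not needed for uniformity.) However, there are two genuine gaps in the Cohen step, which is where all the content lies.

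First, the choice of forcing. Adding $\al2$ Cohen reals destroys CH, as you notice; but your fallback, $\mathrm{Fn}(\al2,2,\al1)$, is countably closed and therefore adds no new reals and no new $\omega$-sequences of ordinals at all. The destruction argument for a length-$\omega$ game needs a \emph{new} $\omega$-sequence of bits to steer the run against a given strategy; with a countably closed forcing every candidate oracle sequence already lies in the ground model, the run it codes is reconstructible there, and no contradiction arises (note that after the Levy collapse the dual of the measure generates an ideal whose positive sets contain a $\sigma$-closed dense subset, so nonempty genuinely does win $\PDzne$ there, and a countably closed forcing will not destroy this). The paper's resolution, which you never land on, is to add exactly $\al1$ many Cohen reals: this is ccc (hence $\al2$-cc, preserving precipitousness), it preserves CH because $\al1^{\al0}=\al1$ under ground-model CH, and it does add a real over every intermediate model $V[G_\epsilon]$ for $\epsilon<\om1$. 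Second, your diagonalization is underspecified at exactly the point where the pressing-down game differs from Banach--Mazur: empty may only play \emph{regressive functions}, so it is not automatic that at every position $v$ empty has two continuations with disjoint outcomes $A(F'(v))\cap A(F''(v))=\emptyset$. The paper needs a separate splitting lemma (Lemma~\ref{lem:prelim1}) for this, proved from CH by coding countable subsets of $\al2$ via an injection $[\al2]^{\al0}\to\al2$ and a case analysis on the strategy's responses; only with it in hand does the coding argument close (take $N\esm H(\chi)$ countable containing the names, $\epsilon=N\cap\om1$, a real $r\notin V[G_\epsilon]$, follow $F'$ or $F''$ according to $r(n)$, and observe that any $\delta$ in the resulting intersection lets $V[G_\epsilon]$ reconstruct $r$). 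Your sketch ``$f_n(\alpha)=$ the $n$-th bit of the Cohen real at the block indexed by $\alpha$'' neither obviously defines a regressive function on $\al2$ nor substitutes for this splitting lemma.
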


To see that not even $\propa(\BMIe(I))$ implies any winning strategy
for nonempty, we assume CH and a $\al3$-saturated ideal $I$ on
$\al2$ concentrated on $E^{\al2}_{\al1}$.
Saturation is preserved by small forcings, in particular by adding some Cohens,
and saturation (together with CH) implies $\propa(\BMIe(I))$.
So we get:
\begin{Lem}\label{lem:burp4}
  The following is consistent with CH plus an $\al3$-saturated ideal  
  on $\al2$:
  CH holds, $\propa(\BMIe(I))$ holds for some $I$ on $\al2$,  but 
  $\propb(\PDzne(J))$ fails for any normal $J$ on $\al2$.
\end{Lem}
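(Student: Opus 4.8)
The plan is to establish the lemma by starting from a ground model $V$ witnessing the hypothesis and then adding $\al1$ Cohen reals. I would use three ingredients, all available: (a) $\al3$-saturation of a normal ideal is preserved by the (small, ccc) Cohen forcing, as is $CH$; (b) $\al3$-saturation together with $CH$ yields $\propa(\BMIe(I))$, via the equivalence of $\PDIe$ and $\BMIe$ for saturated ideals together with Lemma~\ref{lem:jhw3t23}(3); (c) adding Cohen reals destroys every winning strategy of nonempty in $\PDzne$, for every normal ideal of the extension (the fact already quoted for Lemma~\ref{lem:burp3}).

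Concretely: let $V\models CH$ contain an $\al3$-saturated ideal on $\al2$. Replacing it by a normalization if necessary, fix a normal $\al3$-saturated ideal $I$ on $\al2$; we may take $I$ concentrated on $E^{\al2}_{\al1}$ (a normal $\al3$-saturated ideal on $\al2$ must concentrate there, so $E^{\al2}_{\al1}$ is $I$-positive; alternatively, just replace $I$ by $I\restriction E^{\al2}_{\al1}$). Let $\mathbb C$ be the forcing adding $\al1$ Cohen reals, let $G\subseteq\mathbb C$ be generic, and let $\bar I$ be the ideal on $\al2$ generated by $I$ in $V[G]$.

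Now I would check, in $V[G]$: since $\mathbb C$ is ccc with $|\mathbb C|=\al1$, $CH$ still holds, all cofinalities are unchanged so that $E^{\al2}_{\al1}$ and hence the concentration of $\bar I$ are as before, and, by the standard small-forcing argument (any $V[G]$-sequence of fewer than $\al3$ members of $I$, resp.\ any potential large antichain of $\pow(\al2)^{V[G]}/\bar I$, is captured by a $V$-object using the ccc), $\bar I$ is again a normal $\al3$-saturated ideal on $\al2$. As $\bar I$ is $\al3$-saturated, every maximal antichain of $\pow(\al2)/\bar I$ has size $\le\al2$, so $\cac(\pow(\al2)/\bar I,\al2)$ coincides with $\cac(\pow(\al2)/\bar I,\infty)$; by Fact~\ref{facts:fa732} (with \ref{lem:gurktu} and \ref{cor:satpdbm}) the games $\PDIe(\bar I)$ and $\BMIe(\bar I)$ are therefore equivalent, whence $\propa(\BMIe(\bar I))\leftrightarrow\propa(\PDIe(\bar I))$. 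Since $CH$ holds in $V[G]$ and $\bar I$ is concentrated on $E^{\al2}_{\al1}$, Lemma~\ref{lem:jhw3t23}(3) gives $\propa(\PDIe(\bar I))$, hence $\propa(\BMIe(\bar I))$; so $\bar I$ is the required $I$. Finally, by the Cohen-destruction lemma (Lemma~\ref{lem:fueel}, the one invoked for~\ref{lem:burp3}), in $V[G]$ the statement $\propb(\PDzne(J))$ fails for every normal ideal $J$ on $\al2$. Together these give a model of all three clauses.

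The only points needing real care are the preservation of normality and of $\al3$-saturation of $I$ under $\mathbb C$ (a routine instance of small-forcing preservation of saturated ideals) and making sure the saturated ideal in use is concentrated on $E^{\al2}_{\al1}$ so that Lemma~\ref{lem:jhw3t23}(3) applies. The main genuine obstacle, if any, is confirming that the Cohen-destruction lemma really defeats nonempty's strategies for \emph{all} normal ideals $J$ of $V[G]$ and not merely the ground-model ones; but since already a single added Cohen real suffices to defeat a given strategy of nonempty and the remaining Cohen reals stay generic over any intermediate model, this goes through. The substantive external inputs are exactly Lemmas~\ref{lem:jhw3t23}(3) and~\ref{lem:fueel}, proved elsewhere in the paper.
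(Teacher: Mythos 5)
Your proposal is correct and follows essentially the same route as the paper: start with CH and an $\al3$-saturated normal ideal concentrated on $E^{\al2}_{\al1}$, add $\al1$ Cohen reals, note that saturation and CH survive the small ccc forcing, deduce $\propa(\BMIe(\cl(I)))$ from saturation plus CH (your combination of \ref{cor:satpdbm} with \ref{lem:jhw3t23}(3) is exactly the content of the paper's Corollary~\ref{cor:wrqw}), and invoke Lemma~\ref{lem:fueel} with monotonicity to kill $\propb(\PDzne(J))$ for all normal $J$. The only cosmetic difference is that the paper simply assumes the concentration on $E^{\al2}_{\al1}$ as part of the hypothesis rather than arguing for it.
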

See~\ref{cor:funfe}.
(It seems very likely that saturation is not needed for this, but the
construction might get considerably more complicated without it.)

As mentioned in Lemma~\ref{lem:burp2} it is possible that $\propb(\PDIne(I))$
holds for a nowhere precipitous ideal, i.e., for an ideal such that
$\propa(\BMzne(I))$ fails. With a bit more work, we even get
$\propb(\PDIe(I))$:
\begin{Thm}
  It is equiconsistent with a measurable that for  $\kappa=\al2$
  there is a nowhere precipitous $I$  such that
  $\propb(\PDIe(I))$ holds (even for length $\om1$).
\end{Thm}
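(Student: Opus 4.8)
The equiconsistency has an easy direction: if $\propb(\PDIe(I))$ holds then, since $\PDIe$ is stronger than $\PDIne$ and the ${\notin}I$-version is stronger than the ${\neq}\emptyset$-version, $\propb(\PDzne(I))$ holds, so by Lemma~\ref{lem:jhw3t23}(2) $\kappa$ is measurable in an inner model; hence the statement implies the consistency of a measurable. It remains to produce the model from a measurable.

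I would start with a measurable $\kappa$ in $V$ carrying a normal measure $U$ with ultrapower embedding $j\colon V\to M$, and (after a harmless preparatory forcing) assume $2^\kappa=\kappa^+$. The backbone is the model behind Lemma~\ref{lem:burp2}: collapsing with $P=\Levy(\om1,{<}\kappa)$ yields $V[G]$ in which $\kappa=\al2$, CH holds, $I_0:=\NS_{\al2}\restriction E^{\al2}_{\al1}$ is nowhere precipitous, and nonempty wins $\PDIne(I_0)$ even of length $\om1$ (see~\ref{cor:gurk422}). The reason nonempty wins is that $P$ is $\sigma$-closed and $\kappa$-c.c.: it adds no reals and no new countable sequences of sets of $V$, the $U$-large sets of $V$ become stationary subsets of $E^{\al2}_{\al1}$ generating a normal, countably complete filter in $V[G]$, and any countable descending sequence of such sets already lies in $V$ and therefore has a $U$-large lower bound. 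Starting from $S_{-1}=\kappa$, nonempty can thread its moves through a pre-planned $\om1$-tower of such sets, using normality to absorb each regressive $f_n$ while keeping all countable intersections positive; meanwhile the plain Banach--Mazur strategy for empty below any positive set witnesses nowhere-precipitousness.

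The additional work for $\PDIe$ is that empty now moves first and may open with an arbitrary $I$-positive $S_{-1}$, which need not meet the part of $\kappa$ on which nonempty's strategy was organized; and by Fact~\ref{facts:density} this cannot be repaired by a density reduction, unlike for the Banach--Mazur game. My plan is to replace $I_0$ by a normal ideal $I\supseteq I_0$ obtained by a $\sigma$-closed iteration over $V[G]$ which, with appropriate bookkeeping over all $I$-positive sets appearing in the iteration, attaches to each such set $A$ a sufficiently generic descending $\om1$-tower of positive subsets of $A$ whose countable intersections remain positive; the uniform strategy of the previous paragraph then localizes below every positive $S_{-1}$. Being $\sigma$-closed, the iteration preserves CH, $\al1$ and $\al2$ and adds no reals; the delicate point is to design it so that it does \emph{not} resurrect precipitousness of any restriction $I\restriction A$ (for instance by keeping an ill-founded generic ultrapower available below every positive set via interleaved generics or careful bookkeeping), so that $I$ stays nowhere precipitous. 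Finally one verifies that $I$ is normal and $\al2$-complete and reads $\propb(\PDIe(I))$ of length $\om1$ off the localized strategy.

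The main obstacle is precisely this balance. A winning strategy for nonempty in $\PDIe(I)$ that works below \emph{every} positive set is dangerously close to precipitousness --- note that $\propb(\BMIe(I))$ already implies precipitousness (Figure~\ref{fig:eins}) --- so the construction must live inside the gap between PD and BM: in $\PDIe$ empty is restricted to regressive functions, i.e.\ to partitions of the current set into ${<}\kappa$ many ``thin'' pieces, whereas in $\BMIe$ empty may pass to an arbitrary positive subset. The ideal must be engineered so that the former restriction always leaves nonempty a positive homogeneous piece inside the pre-planned $\sigma$-complete tower, while the latter freedom still lets empty destroy every such tower below every positive set. Carrying this out uniformly for all $2^{\al2}$ positive sets with an iteration that retains enough closure and chain condition, and then verifying that the resulting $I$ is normal, $\al2$-complete and nowhere precipitous, is the technical heart of the argument.
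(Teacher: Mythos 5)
Your easy direction is fine. For the forcing direction, however, there is a genuine gap: the entire construction of the ideal is deferred to an unspecified ``$\sigma$-closed iteration with bookkeeping'' over $V[G]$, and you yourself flag the resulting tension (make nonempty win $\PDIe$ below every positive set while keeping every restriction non-precipitous) as ``the technical heart'' without resolving it. That heart is exactly what the paper supplies, and it supplies it in a different place: not by iterating over the collapsed model at $\al2$, but by first constructing, at the level of the large cardinal $\kappa$ before collapsing, a normal ideal $I_0$ which is nowhere precipitous and such that for \emph{every} $I_0$-positive $S$ the dual of $I_0\restriction S$ extends to a normal ultrafilter (Fact~\ref{thm:oiuqweh}). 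This reformulation is the key missing idea: once one has it, the transfer through the Levy collapse is routine --- below any positive $S$ some $\cl(U)\supseteq \cl(I_0\restriction S)$ has a $\sigma$-closed dense set of positive sets, so nonempty wins $\BMIe(\cl(U)\restriction S)$ and hence $\PDIne(\cl(I_0)\restriction S)$ of length $\om1$, which is exactly $\propb(\PDIe(\cl(I_0)))$; nowhere precipitousness transfers by $\kappa$-cc (Lemma~\ref{lem:nowhereprec}).

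Your plan as stated also has concrete obstacles beyond being unspecified. A $\sigma$-closed iteration over $V[G]$ long enough to handle all $2^{\al2}$ positive sets must preserve $\al2$ and the positivity of every set already treated, and ``attaching a descending $\om1$-tower with positive countable intersections to $A$'' is not a defined forcing operation; moreover any such tower must remain invisible to the Banach--Mazur game (empty must still be able to escape it by passing to arbitrary positive subsets below every positive set), and nothing in your sketch arranges this. The paper arranges it by a tree-indexed ${<}\kappa$-support iteration of length $\kappa^+$ inside a Silver-style reverse Easton preparation: the generic sets $A_a$ along any branch of the tree $(\kappa^+)^{<\omega}$ have empty intersection, which hands empty a winning strategy in $\BMzne(I_0)$ (Lemma~\ref{lem:yrtu}), while master-condition arguments show that every $I_0$-positive set contains some $B_b$ for a ``positive index'' $b$ and hence lies in a normal ultrafilter avoiding all null-indexed sets (Lemmas~\ref{lem:jhwetet} and~\ref{lem:hjwte}). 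Finally, the stated equiconsistency with a measurable (rather than the supercompact used in the paper's own construction) rests on Gitik's version of Fact~\ref{thm:oiuqweh}; your sketch does not address this point either.
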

(See Fact~\ref{thm:oiuqweh}.)

Note that (as opposed to~\ref{lem:burp3},~\ref{lem:burp4}) we just make a
specific ideal non-precipitous, and we do not destroy all precipitous ideals.
It seems very hard (and maybe impossible) to do better:
It is not known how to kill all precipitous ideals%
\footnote{Since we are only interested in normal ideals, it would be enough to
  kill all normal precipitous ideals. This doesn't help much, though; it is not
  known whether the existence of a precipitous ideal does imply the existence of
  a normal precipitous one.  Recently Gitik
  \cite{MR2414461,gitik,onalmostprec} proved some interesting results in
  this direction.%
}
on, e.g., $\al1$ with ``reasonable'' forcings.%
\footnote{More specifically,
  it is not known whether large cardinals imply a precipitous
  ideal on $\al1$, although Woodins are not enough, cf.~\cite{MR2029599}.%
}
And it might be even harder to do so while additionally preserving
$\propb(\PDIe(I))$ for some ideals: By recent results by Gitik \cite{gitik}
(and later Ferber and Gitik \cite{onalmostprec}) a $\infty$-semi precipitous
ideal does imply a normal precipitous ideal under in the absence of larger
cardinals and under some cardinal arithmetic assumptions.

\subsection{Moving first}

Let us now investigate the effect of whether empty moves first.

If we compare $G_\text{e}$ and $H_\text{ne}$ for any games $G$ and $H$,
then these variants will be different for trivial reasons: For
example, $\propb(\BMIne(I))$ does not imply
$\propb(\PDze(I))$: Let $U$ be a normal ultrafilter on $\kappa$,
Levy-collapse $\kappa$ to $\al2$, and let $I_1$ be the ideal generated by the
dual of $U$ (which is concentrated on $E^{\al2}_{\al1}$).  Then nonempty wins
$\BMIe(I_1)$ and therefore $\BMIne(I)$ as well for 
$I=I_1+\NS_{\al2}\restriction E^{\al2}_{\al0})$ as well.  But nonempty
can never win $\PDze(I)$, since nonempty cannot win
$\PDzne(\NS_{\al2}\restriction E^{\al2}_{\al0})$.
The same holds for $I=\NS_{\al2}$ (just use the model of
$\propb(\BMIe(\NS_{\al2}\restriction E^{\al2}_{\al1}))$.)

So the games are very different (for trivial reasons) when we change who
has the first move. However, for the Banach Mazur game, the
effect of who moves first is
a simple density effect,\footnote{In games of
  length bigger than $\omega+1$ however it does make a
  substantial difference who moves first at limits.%
}
as we have mentioned in \ref{facts:density}. For example, $\propb(\BMIne(I))$
holds iff $\propb(\BMIe(I\restriction S))$ holds for some positive $S$.

This is not the case for the pressing down games. 
Of course we still get:
\begin{itemize}
  \item $\propb(\PDIe(I))$ holds iff $\propb(\PDIne(I\restriction S))$ holds
    for all $S\in I^+$.
  \item The same holds for $\PDze$.
\end{itemize}

But unlike the Banach Mazur case, we can have the following:
\begin{Lem}\label{lem:akjwrt}
  It is equiconsistent with a measurable that
  $\propb(\PDIne(I))$ holds but 
  $\propb(\PDIe(I\restriction S))$ fails 
  for all positive $S$,
  e.g., for $I=\NS_{\al2}$.
\end{Lem}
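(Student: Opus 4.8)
The plan is to work in a model obtained from a measurable $\kappa$ by a Levy collapse to $\al2$ followed by an auxiliary forcing: the collapsed measure will supply nonempty's ``global'' winning strategy, while the auxiliary forcing destroys all ``local'' ones. Assume GCH in $V$ and let $U$ be a normal ultrafilter on $\kappa$. In $V_1=V[\Levy(\al1,{<}\kappa)]$ the usual master-condition argument (as in the proof of Lemma~\ref{lem:burp2}, i.e.\ in~\ref{cor:gurk422}) yields a normal ideal $I^*$ on $\al2=\kappa$, concentrated on $E^{\al2}_{\al1}$, with $\pow(\al2)/I^*$ forcing equivalent to the $\sigma$-closed ``tail'' collapse $\Levy(\al1,[\kappa,j(\kappa)))$. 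Hence $\propb(\BMIe(I^*))$ (i.e.\ $I^*$ is strategically $\sigma$-closed), and therefore $\propb(\PDIne(I^*))$. Since $I^*$ is normal, $I^*\supseteq\NS_{\al2}$, and $\PDIne$ is monotone in the ideal, so $\propb(\PDIne(\NS_{\al2}))$. The auxiliary forcing will be chosen to keep $\propb(\PDIne(I^*))$ (for which it is enough that the recomputed $I^*$ remains strategically $\sigma$-closed), so this first half survives into the final model.

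For the second half, recall that $\propb(\PDIe(\NS_{\al2}\restriction S))$ holds iff $\propb(\PDIne(\NS_{\al2}\restriction S'))$ holds for every stationary $S'\subseteq S$; so it suffices to find, below each stationary $S$, a stationary $S'$ with $\neg\propb(\PDIne(\NS_{\al2}\restriction S'))$. If $S\cap E^{\al2}_{\al0}$ is stationary, put $S'=S\cap E^{\al2}_{\al0}$: then $\NS_{\al2}\restriction S'$ is concentrated on $E^{\al2}_{\al0}$, so even $\propb(\PDzne(\NS_{\al2}\restriction S'))$ fails by Lemma~\ref{lem:jhw3t23}(4), hence a fortiori $\propb(\PDIne(\NS_{\al2}\restriction S'))$ fails. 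As $E^{\al2}_{\al2}=\emptyset$, only the case $S\subseteq E^{\al2}_{\al1}$ remains. Here one must be careful: if $S'$ is $I^*$-positive then $\NS_{\al2}\restriction S'\subseteq I^*\restriction S'$ and $I^*\restriction S'$ is again strategically $\sigma$-closed, so $\propb(\PDIne(\NS_{\al2}\restriction S'))$ in fact \emph{holds}; thus a usable ``bad'' $S'$ must be $I^*$-null. In particular the second half requires $I^*\restriction S\supsetneq\NS_{\al2}\restriction S$ for every stationary $S\subseteq E^{\al2}_{\al1}$, i.e.\ $\NS_{\al2}\restriction E^{\al2}_{\al1}$ must be nowhere precipitous — precisely the extra feature already obtained in~\ref{cor:gurk422} — which also guarantees that each stationary $S\subseteq E^{\al2}_{\al1}$ has an $I^*$-null stationary subset.

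It therefore remains to force over $V_1$ (refining the forcing construction of~\ref{cor:gurk422}) so that, on a family of $I^*$-null stationary sets dense below $E^{\al2}_{\al1}$, nonempty loses $\PDIne(\NS_{\al2}\restriction S')$, while the recomputed $I^*$ stays strategically $\sigma$-closed. I would do this with an iteration with bookkeeping through these $S'$, at each step adding a generic run of regressive functions defeating every nonempty-strategy on $S'$ (in the spirit of the forcings that render ideals non-precipitous), set up to be transparent to the master-condition embedding $j$. The main obstacle is exactly this simultaneity: the spoiling forcing must add new stationary sets and regressive functions on $\al2$ (a forcing adding no new subset of $\al2$ cannot kill a local strategy, and — by the effect used for Lemma~\ref{lem:burp3} — a forcing adding reals would destroy \emph{every} winning strategy for nonempty on $\al2$, the global one included), and yet must leave the $j$-derived $\sigma$-closed structure of $I^*$ intact; matching the bookkeeping with the appropriate preservation lemma is the heart of the argument. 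Finally the lower bound is immediate: $\propb(\PDIne(\NS_{\al2}))$ implies that $\NS_{\al2}$ is $\infty$-semi-precipitous (Definition~\ref{def:inftysemi}), hence that $\al2$ is measurable in an inner model.
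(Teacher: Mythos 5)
Your first half (getting $\propb(\PDIne(\NS_{\al2}))$ from the dual $I^*$ of $\cl(U)$ after the Levy collapse) and your reduction of the second half (it suffices to find, below every stationary $S$, a stationary $S'$ with $\lnot\propb(\PDIne(\NS_{\al2}\restriction S'))$, necessarily $I^*$-null when $S\subseteq E^{\al2}_{\al1}$) both match the paper. But the core of the second half is a genuine gap: you reduce everything to the existence of an auxiliary ``spoiling'' iteration that kills nonempty's local strategies on a dense family of $I^*$-null stationary sets while preserving the strategic $\sigma$-closure of $I^*$, and then you explicitly leave open exactly the point on which such a construction would stand or fall (``matching the bookkeeping with the appropriate preservation lemma is the heart of the argument''). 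No such forcing is defined, no preservation lemma is proved, and it is not at all clear that a ccc- or $\sigma$-closed-style iteration adding generic runs of regressive functions on $\al2$ can defeat \emph{every} strategy on $S'$ (strategies are objects of size $2^{\al1}$, and new strategies appear at later stages) without damaging the master-condition structure. So as written the proof is incomplete at its main step.

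The paper avoids this auxiliary forcing entirely by choosing the ground model to be $L[U]$. There, Lemma~\ref{lem:bPDgne} shows that $\propb(\PDzne(J))$ forces $J$ to be normally $\infty$-semi precipitous, and Kunen's uniqueness of the normal measure in $L[U]$ (Lemma~\ref{lem:gurke3}, Corollaries~\ref{cor:gurke89} and~\ref{cor:gurke90}) then gives $\lnot\propb(\PDzne(\NS_\kappa\restriction T))$ for \emph{every} stationary $T\notin U$ already in the ground model --- no forcing needed to arrange the local failures. Lemma~\ref{lem:ez4636} shows the Levy collapse preserves each such failure (a straightforward $\sigma$-closure argument), and the factorization property~\eqref{eq:qwkjrqwr} of the collapse shows that every stationary $\name S\subseteq E^{\al2}_{\al1}$ in the extension contains a stationary $T'\cap\name S$ with $T'$ a ground-model stationary set not in $U$; monotonicity of $\PDzne$ finishes the argument. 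If you want to salvage your approach over an arbitrary ground model with a measurable, you would have to actually build and verify the spoiling iteration; the cleaner route is to let the minimality of $L[U]$ do that work for you.
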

(See~\ref{cor:gurk422}.)
So in other words, $\propb(\PDIne(I))$ can hold but for all
positive $S$ there is a positive $S'\subseteq S$ such that
$\propb(\PDIne(I\restriction S'))$ fails.

\section{Empty not winning}
\begin{Lem}\label{lem:ar2}
  \begin{itemize}
    \item CH implies $\propa(\PDIne(I))$ for every $I$ on $\al2$ that is not
      concentrated on $E^{\al2}_{\al0}$.
    \item
      More generally, if $\lambda^{\al0}<\kappa$ for all $\lambda<\kappa$, then
      empty wins $\PDIne(I)$ iff $E^\kappa_{{>}\omega}\in I$.
    \item So if $I$ is concentrated on $E^\kappa_{>\omega}$ (and
      the same cardinal assumptions hold) then $\propa(\PDIe(I))$ holds. 
  \end{itemize}
\end{Lem}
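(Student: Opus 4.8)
The plan is to prove the middle assertion and read off the other two from it; throughout I use that a normal ideal contains $\NS_\kappa$ (so $\kappa\setminus E^\kappa_{\al0}$ and $E^\kappa_{{>}\omega}$ differ only by an $I$-set). Assume $\lambda^{\al0}<\kappa$ for every $\lambda<\kappa$; I want ``empty wins $\PDIne(I)$ iff $E^\kappa_{{>}\omega}\in I$''. The direction $E^\kappa_{{>}\omega}\in I\Rightarrow$ empty wins needs no cardinal arithmetic: fix strictly increasing cofinal maps $c_\alpha\colon\omega\to\alpha$ for $\alpha\in E^\kappa_{\al0}$, and let empty always play the regressive $f_n$ with $f_n(\alpha)=c_\alpha(n)$ for $\alpha\in E^\kappa_{\al0}$ and $f_n(\alpha)=0$ otherwise. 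If each $S_n$ is $f_n$-homogeneous with value $\gamma_n$, then every $\alpha\in\bigcap_n S_n\cap E^\kappa_{\al0}$ satisfies $c_\alpha(n)=\gamma_n$ for all $n$, hence $\alpha=\sup_n\gamma_n$; so $\bigcap_n S_n$ meets $E^\kappa_{\al0}$ in at most one point, and as $\kappa\setminus E^\kappa_{\al0}\in I$ here, $\bigcap_n S_n\in I$.

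The substantial half is the converse, which I prove contrapositively: assuming $E^\kappa_{{>}\omega}\notin I$, I show that no strategy $\sigma$ of empty is winning. By the first ``obvious fact'' about PD I may restrict nonempty to moves of the form $S_n=f_n^{-1}(\beta_n)\cap S_{n-1}$, so a run against $\sigma$ is coded by the ordinals $\beta_n$ played by nonempty; I extend $\sigma$ arbitrarily to positions reached via illegal moves so that it is everywhere defined. For $\delta\in E^\kappa_{{>}\omega}$ let nonempty ``follow $\delta$'': play $\beta_n^\delta:=f_n^\delta(\delta)$, where $f_n^\delta$ is empty's $\sigma$-answer after $\beta_0^\delta,\dots,\beta_{n-1}^\delta$. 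Each $\beta_n^\delta<\delta$, and since $\cf(\delta)>\omega$ the ordinal $\gamma_\delta:=\sup_n\beta_n^\delta$ is $<\delta$; thus $\delta\mapsto\gamma_\delta$ is regressive on the $I$-positive set $E^\kappa_{{>}\omega}$, and by normality it is constant, say $\equiv\gamma^*$, on some $I$-positive $T\subseteq E^\kappa_{{>}\omega}$. For $\delta\in T$ the sequence $(\beta_n^\delta)_{n<\omega}$ is an $\omega$-sequence from $\gamma^*+1$, and there are only $|\gamma^*|^{\al0}<\kappa$ such sequences, so by ${<}\kappa$-completeness of $I$ there is one sequence $\bar\beta=(\beta_n)_{n<\omega}$ with $T':=\{\delta\in T:(\beta_n^\delta)_{n<\omega}=\bar\beta\}\in I^+$. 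Now let nonempty play $\bar\beta$ against $\sigma$; an induction on $n$ (using $\beta_n^\delta=\beta_n$ for $\delta\in T'$) shows that for every $\delta\in T'$ the $\delta$-run coincides with the $\bar\beta$-run, so in particular $\delta\in S_n$ for all $n$. Hence $T'\subseteq S_n$ for all $n$, so every $S_n\in I^+$ (the run is legal) and $T'\subseteq\bigcap_n S_n\notin I$: nonempty wins this run, so $\sigma$ is not winning.

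The two outer bullets now follow. For the first, take $\kappa=\al2$: CH gives $\lambda^{\al0}<\al2$ for all $\lambda<\al2$, $E^{\al2}_{{>}\omega}=E^{\al2}_{\al1}$, and ``$I$ not concentrated on $E^{\al2}_{\al0}$'' means precisely (modulo $\NS_{\al2}$) that $E^{\al2}_{\al1}\notin I$, so the middle statement yields $\propa(\PDIne(I))$. For the third, note that empty wins $\PDIe(I)$ iff empty wins $\PDIne(I\restriction S)$ for some $I$-positive $S$ (its possible first moves); but if $I$ is concentrated on $E^\kappa_{{>}\omega}$, then $S\setminus E^\kappa_{{>}\omega}\in I$ for each $S\in I^+$, so $E^\kappa_{{>}\omega}\notin I\restriction S$, and since $I\restriction S$ is again ${<}\kappa$-complete and normal the middle statement shows empty does not win $\PDIne(I\restriction S)$; hence $\propa(\PDIe(I))$. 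I expect the only real obstacle to be in the converse direction above: the individual ``$\delta$-runs'' are typically \emph{not} legal, and one must notice that the amalgamated run $\bar\beta$ nonetheless is legal, precisely because the $I$-positive set $T'$ sits inside every $S_n$; and the cardinal arithmetic has to be used just so, namely to pass from ``$\gamma_\delta$ constant on a positive set'' (normality) and ``$|\gamma^*|^{\al0}<\kappa$'' (${<}\kappa$-completeness) to a single run that nonempty wins.
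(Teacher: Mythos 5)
Your proof is correct and follows essentially the same route as the paper's: the easy direction is identical, and your ``$\delta$-run'' $(\beta_n^\delta)_{n<\omega}$ is exactly the paper's branch $b_\gamma\in\gamma^\omega$ through the tree of positions generated by empty's strategy (extended by dummy moves at illegal positions), with the same use of $\lambda^{\al0}<\kappa$ to force a positive set of ordinals to share one branch. The only cosmetic difference is the final counting step: the paper codes each branch as a single ordinal via an injection $\phi:\kappa^\omega\to\kappa$ and a club and applies normality once, whereas you apply normality to $\sup_n\beta_n^\delta$ and then ${<}\kappa$-completeness to the $|\gamma^*|^{\al0}<\kappa$ candidate sequences.
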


\begin{proof}
  Assume that $I$ is concentrated on $E^\kappa_{\al0}$.
  Just as in \cite{MR0485391}, it is easy to see that empty wins $\PDIne(I)$:
  For every $\alpha\in E^\kappa_{\al0}$, let
  $(\seq(\alpha,n))_{n\in\omega}$ be a cofinal sequence in $\alpha$.
  Let $F_n$ map $\alpha$ to $\seq(\alpha,n)$. If empty plays
  $F_n$ at stage $n$, then the intersection can contain at most one
  element. 

  So assume towards a contradiction that
  $E^\kappa_{{>}\omega}\notin I$ and that empty has a winning
  strategy for $\PDIne(I)$.
  The strategy assigns sets $X_t$ and regressive function $f_t$
  to nodes $t$ in the tree $T=\kappa^{<\omega}$ in the following way:

  For $t=\langle\rangle$, set $X_{\langle\rangle}=\kappa$ and
  let $f_{{\langle\rangle}}$ be empty's first move.
  For $\alpha\in \kappa$, set $X_{(\alpha)}=f_t^{-1}(\alpha)$.
  Note that $\alpha$ is a valid response for nonempty iff
  $X_{(\alpha)}$ is positive.
  Generally, fix $t\in T$. We can assume by induction that
  one of the following cases hold:
  \begin{itemize}
    \item $t$ corresponds to a partial run $r_t$ with (positive) partial
      result $X_t$; then we set $f_t$ to be empty's response to
      $r_t$.
    \item $X_t\in I$; then we set $f_t\equiv 0$.
  \end{itemize}
  In both cases we set
   $X_{t^\frown \alpha}=X_t\cap f_t^{-1}(\alpha)$.

  Let $b$ be a branch of $T$ (i.e.,  $b\in\kappa^\omega$).
  We set
  $ X^b=\bigcap_{n\in\omega} X_{b\restriction n}$.

  Assume that $b$ corresponds to a run of the game; this is the case iff
  $X_{b\restriction n}$ is $I$-positive for all $n$.
  Then $X^b\in I$, since empty uses the winning strategy.
  If $b$ does not correspond to a run, then $X^b\in I$ as well. So
  \begin{equation}\label{eq:allnull}
    X^b\in I\text{ for all branches }b
  \end{equation}
  $X^b$ and $X^c$ are disjoint for different branches $b,c$;
  and for all $\gamma\in\kappa$ there is exactly one branch $b_\gamma$
  such that $\gamma\in X^{b_\gamma}$. We assume $\gamma\neq 0$ from now on.
  By definition, for all $n$
  \[
    f_{b_\gamma\restriction n}(\gamma)=b_\gamma(n)
  \]
  Since $f_{b_\gamma\restriction n}$ is regressive,
  $b_\gamma(n)<\gamma$ for all $n\in\omega$. In other words,
  $b_\gamma\in\gamma^\omega$.

  Fix an injective function
    $\phi:\kappa^{\omega}\to \kappa$.
  Since $\gamma^{\al0}<\kappa$ for $\gamma<\kappa$, we can find
  a club $C$ such that
  \[
    \phi''\gamma^\omega\subseteq \gamma\text{ for all }
    \gamma\in C\cap E^\kappa_{{>}\omega}.
  \]
  This defines a regressive function
  $g:C\cap E^\kappa_{{>}\omega}\to\kappa$ by
  $g(\gamma)=\phi(b_\gamma)$. Since $I$ is normal and
  does not contain $E^\kappa_{{>}\omega}$, there is a positive
  set $S$ and a $\zeta\in\kappa$ (or equivalently a branch $b$ of $T$)
  such that $g(\gamma)=\zeta$, i.e., $b_\gamma=b$ for all $\gamma\in S$.
  This implies that $S\subseteq X^b$ is positive, a contradiction to~\eqref{eq:allnull}.
\end{proof}

\begin{Lem}\label{lem:gurktu}
  If $I$ is normal, then
  $\PDIe$ is equivalent to $\cac(B_I,\kappa)$.
\end{Lem}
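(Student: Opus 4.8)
The plan is to set up a dictionary between positions of $\PDIe(I)$ and positions of $\cac(B_I,\kappa)$ that preserves the winner of every run, and then simply translate winning strategies back and forth across it. Recall first that in $\PDIne$ (hence in $\PDIe$) we may assume nonempty always plays a whole fibre $S_n=f_n^{-1}(\alpha_n)\cap S_{n-1}$, so that nonempty's move is really the choice of an element of a maximal antichain below $[S_{n-1}]$ in $B_I$. The dictionary sends empty's first move $a_0\in B_I$ to a fixed representative $S_{-1}$; a regressive move $f_n$ of empty in PD to the antichain $A_n=\{[f_n^{-1}(\alpha)\cap S_{n-1}]:\alpha<\kappa,\ f_n^{-1}(\alpha)\cap S_{n-1}\notin I\}$ below $[S_{n-1}]$; and, conversely, an antichain move $A_n$ of empty in cac to a regressive $f_n$ whose positive fibres relative to $S_{n-1}$ realise exactly $A_n$ modulo $I$. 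Under this correspondence a PD-run $S_{-1}\supseteq S_0\supseteq\cdots$ matches a cac-run $a_0\geq a_1\geq\cdots$ with $a_{n+1}=[S_n]$, and — using that $B_I$ is $\sigma$-complete, so $[\bigcap_nS_n]=\bigwedge_n[S_n]$ — the condition ``$\bigcap_nS_n\notin I$'' is equivalent to ``there is a nonzero $b\in B_I$ below all $a_n$''. Hence the two runs always have the same winner.

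The one substantial point is the existence, for every maximal antichain $\mathcal A=\{[T_i]:i<\lambda\}$ with $\lambda\leq\kappa$ below a nonzero $[S]$ in $B_I$, of a regressive $f\colon\kappa\to\kappa$ whose positive fibres relative to $S$ are, mod $I$, exactly the $T_i$. I would first reduce to the case that the $T_i$ are pairwise disjoint subsets of $S$: replacing $T_i$ by $T_i\setminus\bigcup_{j<i}T_j$ does not change its class since $I$ is ${<}\kappa$-complete, and by maximality of $\mathcal A$ the set $S\setminus\bigcup_iT_i$ is positive-free, i.e.\ in $I$, so we may assume $\bigcup_iT_i=S$. The crucial observation is then that $M:=\{\min T_i:i<\lambda\}$ belongs to $I$: otherwise $[M]$ is nonzero and below $[S]$, hence compatible with some $[T_i]$ by maximality, yet $M\cap T_i=\{\min T_i\}$ is a singleton and singletons lie in $I$ (normal ideals extend $\NS_\kappa$), a contradiction. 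Now the map $\gamma\mapsto\min T_{i(\gamma)}$, where $i(\gamma)$ is the index of the unique piece containing $\gamma$, is $\leq\gamma$ with equality exactly on $M$; setting it equal to $0$ on $M$ (and off $\bigcup_iT_i$) gives a genuinely regressive $f$ whose fibre over $\min T_i$ is $T_i\setminus M$, equal mod $I$ to $T_i$, all remaining fibres being null. The converse — that the positive fibres of any regressive $f$ below a positive $S$ form a maximal antichain of size ${\leq}\kappa$ below $[S]$ — is the routine Fodor argument and uses only normality of $I$.

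With this correspondence in hand I would fix, once and for all, a representative $S_a$ of each nonzero $a\in B_I$ and, via the above plus choice, a regressive function $f_{\mathcal A,S}$ realising each maximal antichain $\mathcal A$ of size ${\leq}\kappa$ below $[S]$; then I would spell out the four strategy translations. A winning strategy for nonempty (resp.\ empty) in $\cac(B_I,\kappa)$ is turned into one in $\PDIe(I)$ by running the associated PD-game and letting the cac-strategy dictate the cac-side moves — here the forward direction of the antichain/function correspondence is invoked precisely when empty's cac-antichains have to be converted into regressive functions, and the converse direction otherwise — and symmetrically in the other direction. In each of the four cases the associated run on the other side is a run played according to the given winning strategy, so by the previous paragraph it is won by the same player, whence the translated strategy wins. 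This yields $\propb(\PDIe(I))\leftrightarrow\propb(\cac(B_I,\kappa))$ and $\propa(\PDIe(I))\leftrightarrow\propa(\cac(B_I,\kappa))$, i.e.\ the two games are equivalent. I expect the main obstacle to be exactly the antichain/function correspondence of the second paragraph — in particular finding the observation that the set of minima of a disjointified maximal antichain must be null — after which everything else is bookkeeping about positions of the two games.
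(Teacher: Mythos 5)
Your proof is correct and follows essentially the same route as the paper's: both reduce the lemma to the correspondence between regressive functions and maximal antichains of size at most $\kappa$ in $B_I$, with normality (Fodor) giving one direction and an explicit relabelling of a disjointified maximal antichain giving the other. The only difference is cosmetic --- the paper labels the $i$-th piece by the ordinal $1+i$ and uses maximality to discard the bounded part where this fails to be regressive, while you label each piece by its minimum and use maximality to discard the (null) set of minima.
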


\begin{proof}
  A regressive function defines a maximal antichain
  in $B_I$ of size at most $\kappa$.
  On the other hand, let $A$ be a maximal antichain of size $\lambda\leq
  \kappa$.
  We can choose pairwise disjoint representatives $(S_i)_{i\in\lambda}$
  for the elements of $A$, and define
  \[
     f(\alpha)=
     \begin{cases}
       1+i & \text{if } \alpha\in S_i\text{ and }1+i<\alpha,\\
       0   & \text{otherwise.}
     \end{cases}
  \]
  $f^{-1}(0)\in I$. (Otherwise there is an $S_i$ in $A$ such
  that $T=S_i\cap f^{-1}(0)\in I^+$, pick $\alpha\in T\setminus (1+i+1)$,
  contradiction.)
  So the partition $A$ is equivalent to the regressive function $f$.
\end{proof}

Together with \ref{facts:fa732} we get:
\begin{Cor}\label{cor:satpdbm}
  If $I$ is normal and $\kappa^+$-saturated, then 
  $\BMIe(I)$ and  $\PDIe(I)$ are equivalent. The same holds for
  $\BMIne(I)$ and $\PDIne(I)$.
\end{Cor}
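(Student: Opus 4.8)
The plan is to read the corollary off from Lemma~\ref{lem:gurktu} and Facts~\ref{facts:fa732}, the one new ingredient being that $\kappa^+$-saturation makes the parameter $\kappa$ in the cut-and-choose game irrelevant. Recall that ``$I$ is $\kappa^+$-saturated'' says exactly that $B_I=\pow(\kappa)/I$ has no antichain of size $\kappa^+$, i.e.\ $B_I$ has the $\kappa^+$-chain condition. Since a maximal antichain below a fixed $a\in B_I$, glued together with a maximal antichain below its complement, is a maximal antichain of $B_I$, every maximal antichain below any element of $B_I$ has size at most $\kappa$. Hence the size restriction in $\cac(B_I,\kappa)$ is vacuous, so $\cac(B_I,\kappa)$ and $\cac(B_I,\infty)$ are literally the same game, in particular equivalent.

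Granting this, the ``e'' case is immediate: Lemma~\ref{lem:gurktu} gives $\PDIe(I)\equiv\cac(B_I,\kappa)$, the previous paragraph gives $\cac(B_I,\kappa)=\cac(B_I,\infty)$, and Facts~\ref{facts:fa732} gives $\cac(B_I,\infty)\equiv\BMIe(I)$; composing, $\PDIe(I)$ and $\BMIe(I)$ are equivalent in the paper's sense (for both players, since the equivalences cited carry the $\propa$-part as well).

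For the ``ne'' case I would run the same three-step argument with the obvious nonempty-first variant of cut-and-choose: play $\cac(B_I,\lambda)$ but with the opening element fixed to be $1_{B_I}$ instead of chosen by empty. The proof of Lemma~\ref{lem:gurktu} applies verbatim to show that this variant with parameter $\kappa$ is equivalent to $\PDIne(I)$, and the standard arguments behind Facts~\ref{facts:fa732} show that with parameter $\infty$ it is equivalent to $\BMIne(I)$ (passing through a maximal antichain costs nonempty only one extra round, which is immaterial for existence of winning strategies). The $\kappa^+$-chain condition again collapses the $\kappa$-version onto the $\infty$-version, giving $\PDIne(I)\equiv\BMIne(I)$. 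If one prefers not to introduce a new variant of $\cac$, the ``ne'' case can instead be obtained by density: for every positive $S$ the ideal $I\restriction S$ is again normal and $\kappa^+$-saturated, hence $\BMIe(I\restriction S)\equiv\PDIe(I\restriction S)$ by the ``e'' case; one then combines this with $\propb(\BMIne(I))\leftrightarrow\exists S\in I^+\,\propb(\BMIe(I\restriction S))$ (Facts~\ref{facts:density}), its analogue for ``empty wins'', the reductions $\propb(\PDIe(I\restriction S))\leftrightarrow\forall S'\subseteq S\,\propb(\PDIne(I\restriction S'))$ from the ``Moving first'' discussion, and the fact that the Banach--Mazur game is always stronger than the corresponding pressing down game.

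There is no genuinely hard step; the only care needed is in the ``ne'' bookkeeping, in particular checking the ne-analogue of Facts~\ref{facts:fa732} (equivalently, verifying the combinatorial identities used in the density route). I would emphasise in the write-up that the implication $\propb(\PDIne(I))\to\propb(\BMIne(I))$ really does consume the saturation hypothesis: by Lemma~\ref{lem:akjwrt} it can fail outright in general, since $\propb(\PDIne(I))$ need not yield $\propb(\PDIe(I\restriction S))$ for any positive $S$.
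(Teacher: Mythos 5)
Your proof is correct and is essentially the paper's own argument: the paper derives the corollary by combining Lemma~\ref{lem:gurktu} with Facts~\ref{facts:fa732}, the implicit point being exactly your observation that $\kappa^+$-saturation bounds every maximal antichain of $B_I$ by $\kappa$, so that $\cac(B_I,\kappa)$ and $\cac(B_I,\infty)$ coincide. For the ``ne'' half the paper says nothing beyond ``the same holds'', and your primary route (the nonempty-first variant of $\cac$) is the right reading of that; I would only caution that your alternative density route is not self-contained as sketched, since the quoted equivalences give no way to pass from $\propb(\PDIne(I))$ to $\propb(\PDIe(I\restriction S))$ for some positive $S$ --- that step needs its own saturation argument and, as Lemma~\ref{lem:akjwrt} shows, is false without one.
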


If $I$ is $\kappa^+$-saturated, then it is precipitous, i.e.,
$\propa(\BMze(I))$ holds \cite[22.22]{MR1940513}.  However, $I$ can
be concentrated
on $E^\kappa_{\al0}$ (for example, $\kappa$ could be $\al1$), which negates
$\propa(\BMIe(I))$.  However, with Lemma~\ref{lem:ar2} we get:
\begin{Cor}\label{cor:wrqw}
  If $I$ is $\kappa^+$-saturated, $\lambda^{\al0}<\kappa$ for
  all $\lambda<\kappa$, and $I$ is normal and 
  concentrated on $E^\kappa_{{>}\al0}$,
  then $\propa(\BMIe(I))$ holds.
\end{Cor}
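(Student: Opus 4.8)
The plan is to derive this corollary directly from Lemma~\ref{lem:ar2} and Corollary~\ref{cor:satpdbm}; essentially all the work has already been done, and what remains is to check that the hypotheses line up. Recall the situation: $\kappa^+$-saturation already gives precipitousness of $I$ (so $\propa(\BMze(I))$), but as remarked just before the statement this is compatible with $I$ being concentrated on $E^\kappa_{\al0}$, which negates $\propa(\BMIe(I))$; the extra assumption that $I$ lives on $E^\kappa_{{>}\al0}$ is exactly what is needed to push through to the ${\notin}I$ game.

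First I would apply the third bullet of Lemma~\ref{lem:ar2}. Its hypotheses are: $I$ normal, $I$ concentrated on $E^\kappa_{{>}\omega}$, and $\lambda^{\al0}<\kappa$ for all $\lambda<\kappa$. These hold by assumption, noting only that $E^\kappa_{{>}\al0}$ and $E^\kappa_{{>}\omega}$ denote the same set (cofinalities are cardinals, so $\cf(\alpha)>\al0$ iff $\cf(\alpha)>\omega$). Hence $\propa(\PDIe(I))$ holds, i.e., empty has no winning strategy in the pressing down game $\PDIe(I)$. Next I would invoke Corollary~\ref{cor:satpdbm}: since $I$ is normal and $\kappa^+$-saturated, the games $\BMIe(I)$ and $\PDIe(I)$ are equivalent, and by the definition of equivalence this includes $\propa(\PDIe(I))\leftrightarrow\propa(\BMIe(I))$. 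Combining the two displays yields $\propa(\BMIe(I))$, as desired.

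There is no genuine obstacle here — this is a bona fide corollary. The only points deserving a moment's care are the trivial identifications just mentioned ($E^\kappa_{{>}\al0}=E^\kappa_{{>}\omega}$, and ``concentrated on'' meaning the complement of this set is in $I$), and making sure there is no circularity: Lemma~\ref{lem:ar2} is proved directly from normality of $I$ and the covering assumption $\lambda^{\al0}<\kappa$, while Corollary~\ref{cor:satpdbm} rests on Lemma~\ref{lem:gurktu} and Facts~\ref{facts:fa732}, none of which use the present corollary. So the chain of implications is legitimate and the proof is complete.
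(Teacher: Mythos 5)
Your proof is correct and is exactly the derivation the paper intends: the corollary is stated immediately after Corollary~\ref{cor:satpdbm} with the remark ``with Lemma~\ref{lem:ar2} we get,'' i.e., one combines $\propa(\PDIe(I))$ from the third bullet of Lemma~\ref{lem:ar2} with the equivalence of $\BMIe(I)$ and $\PDIe(I)$ for normal $\kappa^+$-saturated ideals. Your checks of the hypotheses and of non-circularity are accurate, so nothing further is needed.
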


In the rest of the section, we show that properness implies $\propa(\BMIe(I))$. 
This is not needed for the rest of the paper.

For any Boolean algebra $B$, $\propb(\BMIe(B))$ implies that $B$ is proper (as
a forcing notion), cf.\ e.g.~\cite[Thm.\ 7]{MR739910}.  For Boolean algebras of
the form $B_I=\pow(\kappa)/I$ we also get:
\begin{Lem}\label{lem:propaBM}
  Assume $\kappa>2^{\al0}$. If $B_I$ is proper then
  $\propa(\BMIe(I))$ holds. 
\end{Lem}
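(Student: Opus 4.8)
The plan is to reduce the statement, via the Fact that $\propa(\BMIe(B))$ is equivalent to ``$B$ is $\sigma$-distributive'' (together with the fact that $\BMIe(I)$ is the game $\BMIe(B_I)$), to showing that the forcing $B_I$ is $\sigma$-distributive, i.e.\ that it adds no new $\omega$-sequence of ordinals. So I would assume towards a contradiction that some $A_0\in I^+$ forces $\name{s}$ to be a function $\omega\to\ON$ that is not in the ground model. Since $B_I$ is proper, I fix a large regular $\theta$, a countable $N\prec H(\theta)$ containing $I$, $\kappa$, $A_0$ and $\name{s}$, and an $(N,B_I)$-generic condition $q\le A_0$. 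For each $n$ I choose (inside $N$) a maximal antichain $\mathcal A_n$ deciding the value $\name{s}(n)$. Because $N$ is countable, each trace $\mathcal A_n\cap N$ is countable, so I can enumerate $\bigcup_{n}(\mathcal A_n\cap N)$ as $\{a_k:k\in\omega\}$ and fix positive representatives $A_k\subseteq\kappa$ (all lying in $V$). Genericity of $q$ guarantees that for every $n$ the generic filter $\name{G}$ meets $\mathcal A_n\cap N$ in exactly one element, and that this element determines $\name{s}(n)$; thus the whole of $\name{s}$ is read off from the countably many bits ``$A_k\in \name{G}$''.

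The decisive step uses $\kappa>2^{\al0}$. Define a ground-model function $F\colon\kappa\to\pow(\omega)$ by $F(\xi)=\{k:\xi\in A_k\}$. Its range has size at most $2^{\al0}<\kappa$, so the sets $F^{-1}(s)$ (for $s$ in the range) form a partition of $\kappa$ into fewer than $\kappa$ pieces. I first record the elementary fact that the generic ultrafilter $\name{G}$ concentrates on a single piece of any such partition: given a positive $B$, the ${<}\kappa$-completeness of $I$ forces some $B\cap F^{-1}(s)$ to be positive (otherwise $B$ would be a union of ${<}\kappa$ many null sets), so the set of conditions contained in one piece is dense, and the generic meets it. Hence there is a fixed $s^\ast\in\pow(\omega)^V$ with $F^{-1}(s^\ast)\in \name{G}$. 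Since $F^{-1}(s^\ast)\subseteq A_k$ when $k\in s^\ast$ and $F^{-1}(s^\ast)\cap A_k=\emptyset$ otherwise, this pins down the generic completely on the relevant bits: $A_k\in \name{G}$ iff $k\in s^\ast$, for every $k$.

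It then remains to derive the contradiction. The value $\name{s}(n)$ is the ordinal forced by the unique $a_k\in\mathcal A_n\cap N$ with $a_k\in\name{G}$, and by the previous step this is the unique $k$ with $a_k\in\mathcal A_n$ and $k\in s^\ast$; both $\mathcal A_n$ and the map sending $a_k$ to the ordinal it forces are ground-model objects, so $\name{s}$ is computed from the single real $s^\ast\in V$ by a ground-model recipe. Thus $q\forc \name{s}\in V$, contradicting $q\le A_0$. I expect the main obstacle to be making precise, from properness, that $\name{s}$ is genuinely determined by the countable trace $\name{G}\cap N$ (equivalently, by the countably many antichains $\mathcal A_n\cap N$); this is exactly what collapses the codomain of $F$ to $\pow(\omega)$ and lets the hypothesis $\kappa>2^{\al0}$ bite. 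By contrast, the concentration lemma needs only ${<}\kappa$-completeness, normality of $I$ is not used at all, and the argument never forms the (possibly ill-founded) generic ultrapower.
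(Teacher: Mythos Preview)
Your argument is correct and takes a genuinely different route from the paper's proof. The paper does not pass through the distributivity characterisation at all: it assumes a winning strategy $\tau$ for empty, picks a countable $N\esm H(\chi)$ containing $\tau$ and an $N$-generic $q$, and then chooses a single ordinal $\delta^\ast\in q$ that lies in $\bigcup(\mathcal D\cap N)$ for every predense $\mathcal D\in N$ but avoids every $I$-null countable intersection of positive sets from $N$. (The hypothesis $\kappa>2^{\al0}$ enters because there are only $|[N]^{\al0}|\le 2^{\al0}<\kappa$ such intersections, so their union is in $I$ by ${<}\kappa$-completeness.) Nonempty then plays by always keeping $\delta^\ast$ inside empty's next response, and the resulting intersection is an $I$-null set containing $\delta^\ast$, contradicting the choice of $\delta^\ast$.

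Your proof trades the explicit ``follow a point'' play for a coding argument: the function $F:\kappa\to\pow(\omega)$ packages the countably many traces $\mathcal A_n\cap N$ into a partition of $\kappa$ into ${\le}2^{\al0}$ pieces, and ${<}\kappa$-completeness forces the generic to sit in one piece $F^{-1}(s^\ast)$, from which $\name s$ is read off in $V$. The underlying combinatorics is the same (countably many sets from $N$ produce only $2^{\al0}$ ``types'', and completeness handles that), but your packaging via $\sigma$-distributivity is arguably cleaner and makes the role of properness (reducing to countably many relevant antichain elements) more transparent; the paper's version, on the other hand, stays closer to the game and yields an explicit counter-play for nonempty rather than a forcing-theoretic contradiction. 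Both arguments use only ${<}\kappa$-completeness, not normality, as you note.
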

Normality of $I$ is not needed, just ${<}\kappa$-completeness.

\begin{proof}
  Assume
  towards a contradiction that $\tau$ is a winning strategy for empty.
  Let $p_0\in I^+$ be empty's first move according to $\tau$.
  Pick $N\esm H(\chi)$ countable containing $I$ and $\tau$ (and therefore $p_0$),
  and let $q\leq p_0$ be
  $N$-generic. In other words,
  if $\mathcal D\in N$ is a predense subset of $I^+$,
  then $q$ is (mod $I$) a subset of $\bigcup(\mathcal  D\cap N)$.
  Therefore
  \[
    \mathcal X=q\cap \bigcap\{\bigcup (\mathcal D\cap N):\,
    \mathcal D\subseteq I^+\text{ is predense and }\mathcal D\in N\},
  \]
  is positive. We set
  \[
    \mathcal Y=\bigcup \{\bigcap_{n\in\omega}A_n:\,
      (\forall n\in\omega)\,A_n\in N\cap I^+,\, \bigcap_{n\in\omega}A_n\in I\}
  \]
  $\mathcal Y\in I$, since $|[N]^{\al0}|<\kappa$. So we can pick some
  \[
    \delta^*\in \mathcal X\setminus \mathcal Y.
  \]
  We now construct a run of the game such that every
  initial segment is in $N$.
  Assume that we already know the initial segment of the first $n-1$ stages,
  and that this segment is in $N$. Then empty's move $A_n$ given by
  $\tau$ is in $N$ as well. We further assume that $\delta^*\in A_n$.
  (This is true for $n=0$, since $\delta^*\in q\leq p_0$.)
  For any $I$-positive $B\subseteq A_n$ let empty's response be
  $f(B)$. The set
  \[
    D=\{\kappa\setminus A_n\}\cup
             \{f(B): B\subseteq A_n\text{ positive}\}
  \]
  is dense in $I^+$ and is in $N$. Since $\delta^*\in\mathcal X$,
  $\delta^*\in \bigcup (D\cap N)$,
  i.e.\ there is some $B\in N$ such that
  $\delta^*\in f(B)$.
  Let $B$ be nonempty's move.

  So $\delta^*$ will be in the intersection $Z=\bigcap_{n\in\omega} A_n$, and
  since empty wins the run, $Z\in I$.
  Since each $A_n$ is in $N$, we get $Z\subseteq \mathcal Y$.
  This contradicts $\delta^*\in Z$.
\end{proof}

\section{$\infty$-semi precipitous ideals}
\begin{Def}\label{def:inftysemi}
  A $\kappa$-complete ideal  $I$ on
  $\kappa$ is called
  (normally) $\infty$-semi precipitous, if there is some partial order
  $P$ which
  forces  that there is a (normal) wellfounded, nonprincipal,
  $\kappa$-complete $V$-ultrafilter containing the dual
  of $I$.
\end{Def}
Donder, Levinski \cite{MR1026565} introduced the notion
of $\lambda$-semi precipitous, and Ferber and Gitik  \cite{onalmostprec}
extended the notation to $\infty$-semi precipitous. Another name,
``weakly precipitous'', is used for this notion in
\cite{MR1240627}. However, Jech uses the term ``weakly precipitous'' for
another concept, cf.~\cite{MR739911, MR1026565}.

We will see in Lemma~\ref{lem:bPDgne} that $\propb(\PDzne(I))$ implies
that $I$ is normally $\infty$-semi precipitous. This will establish
the consistency strength of $\propb(\PDzne(I))$:

\begin{Lem}\label{wurst}
  If there is an $\infty$-semi precipitous ideal on $\kappa$,
  then $\kappa$ is measurable in an inner model.
\end{Lem}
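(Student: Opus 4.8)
The statement is a classical fact: the existence of an $\infty$-semi precipitous ideal on $\kappa$ implies $\kappa$ is measurable in an inner model. The approach is to reduce to the well-known theorem that a precipitous ideal implies an inner model with a measurable, by showing that from an $\infty$-semi precipitous ideal we can manufacture a genuinely precipitous ideal in some forcing extension — and since ``measurable in an inner model'' is absolute between $V$ and forcing extensions (inner models are not added or destroyed by forcing, and the relevant $0^\#$-like / core-model statements are upward and downward absolute under set forcing), this suffices. First I would unpack the definition: there is a poset $P$ and a $P$-name $\name{U}$ such that $P$ forces ``$\name{U}$ is a wellfounded, nonprincipal, $\kappa$-complete $V$-ultrafilter extending the dual filter of $I$''. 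The key point is that such a $V$-ultrafilter $U$ in $V[G]$ gives an elementary embedding $j\colon V\to M=\ult(V,U)$ with $M$ wellfounded (hence transitive), $\crit(j)=\kappa$ (using $\kappa$-completeness and nonprincipality to see the critical point is exactly $\kappa$, after noting each bounded subset of $\kappa$ in $V$ is decided by $U$ the trivial way), all computed in $V[G]$ but with $j\restriction V$ definable from $U$.

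\textbf{From the embedding to an inner model with a measurable.} Given $j\colon V\to M$ in $V[G]$ with critical point $\kappa$ and $M$ transitive, the standard argument (Kunen, Jech~\cite{MR1940513}) runs as follows. Consider the core model $K$ (or, if one wants to avoid the full fine-structural machinery, argue by contradiction: if $\kappa$ is not measurable in any inner model, then in particular $K$ exists and is rigid, i.e.\ admits no nontrivial elementary self-embedding). Restricting $j$ to $K^V$: since $K$ is absolutely definable and $K^{V[G]}=K^V$ (the core model is not changed by set forcing, under the anti-large-cardinal hypothesis), $j\restriction K$ is a nontrivial elementary embedding of $K$ into $M$, and one shows $j\restriction K\colon K\to K$ by the usual comparison/iterability argument — contradicting the rigidity of $K$. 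Hence $\kappa$ must be measurable in an inner model. I would cite this as the known hard input rather than reprove it; the excerpt already invokes ``a precipitous ideal implies $\kappa$ measurable in an inner model'' as a Fact, and the present lemma is the natural common generalization, so the honest move is to say: the proof of that Fact only uses the existence of such a $j$ in a forcing extension, and $\infty$-semi precipitousness delivers exactly that.

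\textbf{The expected obstacle.} The only subtle point is that $j$ lives in $V[G]$, not in $V$, so one must be careful that the inner-model-theoretic conclusion is about $V$. This is handled by the absoluteness of $K$ (or of ``$\kappa$ is measurable in an inner model'', equivalently of the failure of the relevant covering-type principle) between $V$ and $V[G]$ for set forcing $P$: $K^{V[G]}=K^V$ and an inner model of $V[G]$ with a measurable at $\kappa$ yields, by reflecting back, that $K^V$ has a measurable at $\kappa$, or more directly that $\kappa$ is measurable in $K^V\subseteq V$. A second minor point is checking $\crit(j)=\kappa$ exactly: nonprincipality plus $\kappa$-completeness plus the fact that $U$ contains the dual of $I$ (a proper ideal, so $U$ is nontrivial) forces the critical point up to $\kappa$, while $U\in V[G]$ being a $\kappa$-complete ultrafilter on $\kappa^V$ measuring all of $\pow(\kappa)^V$ with some set of size $\kappa$ outside it (e.g.\ each singleton) keeps it at most $\kappa$; normality, when present, is not needed here. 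Thus the proof is essentially a pointer to the classical argument together with the remark that the forcing extension does not matter.

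\begin{proof}
  Let $P$ witness that $I$ is $\infty$-semi precipitous, and let $G$ be
  $P$-generic over $V$. In $V[G]$ there is a wellfounded, nonprincipal,
  $\kappa$-complete $V$-ultrafilter $U\supseteq I^\ast$ (the dual filter
  of $I$). Form the ultrapower $j=j_U\colon V\to M=\ult(V,U)$, computed in
  $V[G]$; by wellfoundedness we take $M$ transitive. Since $U$ is
  nonprincipal and $\kappa$-complete and $\kappa$ is regular uncountable,
  a standard argument shows $\crit(j)=\kappa$: $\kappa$-completeness forces
  the critical point to be at least $\kappa$, and $U$ is a $\kappa$-complete
  ultrafilter on $\kappa$ with $\{\alpha\}\notin U$ for each $\alpha<\kappa$,
  so $\kappa$ is moved. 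Thus $V[G]$ contains a nontrivial elementary
  embedding of $V$ with critical point $\kappa$ and transitive target.

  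Now argue as in the proof that a precipitous ideal yields an inner model
  with a measurable (see~\cite{MR1940513}). Suppose toward a contradiction
  that $\kappa$ is not measurable in any inner model of $V$. Then the core
  model $K$ exists and is rigid, and it is absolute for set forcing, so
  $K^{V[G]}=K^V=:K$. Restricting $j$ gives $j\restriction K\colon
  K\to j(K)$, a nontrivial elementary embedding, and the usual
  comparison/iterability argument shows $j(K)=K$ and $\crit(j\restriction
  K)=\kappa$, i.e.\ $K$ admits a nontrivial elementary self-embedding ---
  contradicting the rigidity of $K$. Hence $\kappa$ is measurable in an
  inner model of $V[G]$, and therefore, by the absoluteness of $K$ and of
  the statement ``$\kappa$ is measurable in an inner model'' for set
  forcing, $\kappa$ is measurable in an inner model of $V$.
\end{proof}
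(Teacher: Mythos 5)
There is a genuine gap, and it is precisely the one the paper itself flags. Your reductio hypothesis is ``$\kappa$ is not measurable in any inner model of $V$'', but the rigidity of the Dodd--Jensen core model $K$ (and its absoluteness under set forcing) is a consequence of the much stronger hypothesis ``there is no inner model with a measurable cardinal at all''. If some ordinal other than $\kappa$ is measurable in an inner model, $K$ need not be rigid and your contradiction evaporates. Running the argument under the correct global hypothesis, what you actually obtain from the embedding $j\colon V\to M$ in $V[G]$, the black-box core model facts, and $K^{V[G]}=K^V$ is only that \emph{some} ordinal is measurable in an inner model; the step ``hence $\kappa$ is measurable in an inner model'' does not follow from refuting rigidity. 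The paper says exactly this: ``this only tells us that there is some ordinal which is measurable in an inner model, and not that this ordinal is indeed $\kappa$.'' Pinning the measurable at $\kappa$ requires either genuinely finer core model theory (weak amenability and iterability of the derived $K$-ultrafilter at $\kappa$; the paper's pointer is to Gitik and to [7.4.8, 7.4.11] of Zeman's book), which is considerably more than ``the usual comparison/iterability argument'', or a different argument altogether.

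The paper takes the second route, following Jech--Prikry: it fixes a set $A$ of strong limit cardinals $\mu$ with $\cf(\mu)>\kappa$ and $\mu>|P|$ (hence fixed by $j$), chooses $p_0$ deciding $[\mathrm{Id}]=\alpha_0$, and shows that for $x\in\pow(\kappa)\cap L[A]$ the statement ``$x\in\name D[G]$'' is decided by $p_0$ alone, because $x$ is definable in $L[A]$ from parameters fixed by $j$. Consequently the derived ultrafilter $U=\{x\in\pow(\kappa)\cap L[A]:\, p_0\not\forc x\notin\name D\}$ is already an element of $V$, and one verifies directly that (after normalizing) it is a ${<}\kappa$-complete, fine, iterable $L[A]$-ultrafilter all of whose iterated ultrapowers are wellfounded; Kunen's theorem then yields an inner model in which $\kappa$ is measurable. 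This step --- producing a measure-like object on $\kappa$ that lives in $V$ rather than only in $V[G]$ --- is the actual content of the lemma, and it is absent from your proof. (Your computation that the critical point of $j$ is exactly $\kappa$ is fine, but that embedding lives in $V[G]$ and by itself says nothing about inner models of $V$.)
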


This is of course no surprise: the proof is a simple
generalization of the proof~\cite[Theorem 2]{MR560220}
for precipitous;
Jech and others have used in fact very similar
generalizations. (E.g., in~\cite{MR739911} it is shown more or less
that pseudo-precipitous ideals are $\infty$-semi precipitous.)

\begin{proof}
  We assume that there is a forcing $P$ and a name $\name D$ for
  the $V$-generic filter. In particular:
  \begin{equation}\label{eq:gu3}
    \parbox{0.8\textwidth}{
    $P$ forces that in $V[G]$ there is an elementary
    embedding $j:V\to M$ for some transitive class $M$
    in $V[G]$.}
  \end{equation}

  If we are only interested in consistency strength, we can use
  Dodd-Jensen core model theory as a black-box: \eqref{eq:gu3} is equiconsistent
  to a measurable cardinal,  which follows immediately, e.g., from 
  \cite[35.6]{MR1940513} and the remark after 
  \cite[35.14]{MR1940513}: $K^V=K^{V[G]}$, and there is
  a measurable iff there is an elementary embedding $j:K\to M$ (which also
  implies $M=K$). However, this only tells us that there is some ordinal
  which is measurable in an inner model, and not that this ordinal is indeed $\kappa$.

  To see this, we can either use more elaborate core model theory
  (as pointed out by Gitik, cf.~\cite[7.4.8,7.4.11]{MR1876087}).
  Alternatively, we can just slightly modify
  the proof of~\cite[Theorem 2]{MR560220}
  (which can also be found in \cite[22.33]{MR1940513}). We will do
  that in the following:
  Let $K$ be the class of strong limit cardinals $\mu$ such that
  $\cf(\mu)>\kappa$ and $\mu>|P|$. Let $(\gamma_n)_{n\in\omega}$ be an increasing sequence in
  $K$ such that $|K\cap \gamma_n|=\gamma_n$. Set $A=\{\gamma_n:\, n\in\omega\}$
  and $\lambda=\sup(A)$.

  By a result of Kunen, it is enough to show the following:
  \begin{equation}\label{eq:gu}
    \parbox{0.8\textwidth}{
    There is (in $V$) an iterable, normal, fine $L[A]$-ultrafilter $W$
    such that every iterated ultrapower is wellfounded.}
  \end{equation}

  We have a name $\name D$ for the $V$-generic filter.
  $\name D$ does not have to be normal, but
  there is some $p_0\in P$ and
  $\alpha_0\geq \kappa$ such that $p_0$ forces
  that $[\rm{Id}]=\alpha_0$.
  We set
  \begin{gather*}
    \mathcal{J}=\{x\subseteq \kappa:\, p_0\forc x\notin \name D\},\text{ and}\\
    U=\{x\in \pow(\kappa)\cap L[A]:\, x\notin \mathcal{J} \}.
  \end{gather*}
  $U$ is generally not normal, but the normalized version of $U$
  will be as required.

  {\em $U$ is an $L[A]$ ultrafilter:}
  Let $x\subseteq \kappa$ be in $L[A]$. We have to show:
  $x$ or $\kappa\setminus x$ are in $\mathcal{J}$.
  \begin{itemize}
    \item
      There is a formula $\varphi$
      and a finite
      $E\subseteq \kappa\cup K$ such that  (in  $L[A]$) $\alpha\in x$
      iff $\alpha<\kappa$ and  $\varphi(\alpha,E,A)$.
    \item Assume $G$ is $P$-generic over $V$ and contains $p_0$.
      $[\rm{Id}]=\alpha_0$, so
      $x\in \name D[G]$ iff $\alpha_0\in j(x)$.
    \item
      By elementarity
      (in $V[G]$)
      $\alpha_0\in j(x)$ iff
      $j(L[A])$ thinks that  $\varphi(\alpha_0,j(E),j(A))$.
      But $j(\mu)=\mu$ for every $\mu \in K$.
    \item
      So we get $x\in \name D[G]$ iff (in $L[A]$)
      $\varphi(\alpha_0,E,A)$ holds, independently of $G$ (provided
      $G$ contains $p_0$). In other words, if there is
      some generic $G$ such that $x\in \name D[G]$, then
      $x\in \name D[G]$ for all generic $G$ (containing $p_0$);
      i.e.\ $p_0$ forces that $x\in \name D[G]$; i.e.\
      $\kappa\setminus x\in \mathcal J$.
    \item
      Assume that $x$ is not in $\mathcal{J}$.
      Then there is some $q\leq p_0$ forcing
      that $x\in \name D$. So $\kappa\setminus x\in \mathcal J$.
  \end{itemize}

  {\em $U$ is ${<}\kappa$-complete, fine and wellfounded:}
    Pick $\lambda<\kappa$ and  $(x_\alpha)_{\alpha\in\lambda}$
    in $L[A]$ such that each $x_\alpha\in U$.
    Then $p_0$ forces that $\kappa\setminus x_\alpha\notin \name D$,
    and therefore that $\bigcup \kappa\setminus x_\alpha \notin \name D$
    (since $\name D$ is a ${<}\kappa$-complete ultrafilter).
\\
    This also shows that
    (in $V$) the intersection of $\al0$ many $U$-elements
    is nonempty; which implies that every iterated ultrapower
    is wellfounded (provided iterability).

  {\em $U$ is iterable:}
  Let (in $L[A]$) $(x_\alpha)_{\alpha\in\kappa}$ be
  a sequence of subsets of $\kappa$.
  Let $G$ be $P$-generic over $V$ and contain $p_0$.
  In $V[G]$,
  $x_\alpha\in \name D[G]$ iff $\alpha_0\in j(x_\alpha)$.
  The sequence $(j(x_\alpha))_{\alpha\in\kappa}$ is
  in $L[j(A)]$, and therefore also the set
  $\{\alpha\in\kappa:\, \alpha_0\in j_G(x_\alpha)\}$.
  But $L[j(A)]=L[A]$.

  {\em normalizing:}
  Since we now know that $U$ is wellfounded,
  we know that there is some
  $f:\kappa\to\kappa$ in $L[A]$  representing
  $\kappa$.
  Set $W=f_*(U)$. Then $W$ is as required.
\end{proof}

The following follows easily from Kunen's method of iterated ultrapowers
(see, e.g.,~\cite[4.3]{MR2371208} for a proof):
\begin{Lem}\label{lem:gurke3}
  Assume $V=L[U]$, where $U$ is a normal ultrafilter on $\kappa$.
  Let $V'$ be a forcing extension of $V$ and $D\in V'$ a normal,
  wellfounded $V$-ultrafilter on $\kappa$. Then $D=U$.
\end{Lem}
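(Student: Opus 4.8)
The plan is to reconstruct $D$ from the ultrapower it induces, using Kunen's analysis of the iterated ultrapowers of $L[U]$.

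First I would form $M=\ult(V,D)$, the ultrapower of $V$ by $D$ taken with functions from $V$ (this is legitimate since a $V$-ultrafilter measures every $A\in\pow(\kappa)^V$), with ultrapower embedding $j\colon V\to M$. Since $D$ is wellfounded we may take $M$ transitive; since $D$ is a normal, nonprincipal, $\kappa$-complete $V$-ultrafilter we get $\mathrm{crit}(j)=\kappa$ and $[\,\mathrm{id}\,]_D=\kappa$, so that
\[
  A\in D \;\Longleftrightarrow\; \kappa\in j(A)\qquad\text{for every }A\in\pow(\kappa)^V .
\]
By elementarity and $V=L[U]$, the model $M$ satisfies ``$V=L[\,\text{normal ultrafilter}\,]$'', so $M=L[j(U)]$ with $j(U)$ a normal ultrafilter on $j(\kappa)$ in $M$.

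Next I would invoke Kunen's method of iterated ultrapowers. Let $(M_\alpha,j_{\alpha,\beta})_{\alpha\le\beta}$ be the iteration of $V=M_0=L[U]$ by $U$ and its images, with $\kappa_\alpha=j_{0,\alpha}(\kappa)$ and $U_\alpha=j_{0,\alpha}(U)$; recall that all $M_\alpha$ are wellfounded, $M_\alpha=L[U_\alpha]$, and the $\kappa_\alpha$ are strictly increasing. The relevant fact is that any elementary embedding of $L[U]$ into a wellfounded model is an iteration map of this iteration; applied to $j\colon V\to M$ (which is nontrivial, as $\mathrm{crit}(j)=\kappa$) this yields an ordinal $\theta\ge 1$ with $M=M_\theta$ and $j=j_{0,\theta}$. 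This is exactly the content worked out, in the present situation, in~\cite[4.3]{MR2371208}, and I expect it to be the only non-routine step.

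Finally I would read off $D$. For $A\in\pow(\kappa)^V$ we have $A\in U$ iff $\kappa_0\in j_{0,1}(A)$ (the defining property of $\ult(V,U)$ together with $[\,\mathrm{id}\,]_U=\kappa_0$). Moreover $j_{0,1}(A)\subseteq j_{0,1}(\kappa_0)=\kappa_1=\mathrm{crit}(j_{1,\theta})$, hence $j_{1,\theta}(j_{0,1}(A))\cap\kappa_1=j_{0,1}(A)$; since $\kappa_0<\kappa_1$ this gives $\kappa_0\in j_{0,1}(A)$ iff $\kappa_0\in j_{0,\theta}(A)=j(A)$ (for $\theta=1$ this is trivial). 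Combining with the displayed equivalence, $A\in D$ iff $\kappa\in j(A)$ iff $A\in U$, so $D=U$, as desired.
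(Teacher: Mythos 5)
Your proof is correct and takes essentially the same route the paper intends: the paper gives no argument for this lemma beyond the remark that it ``follows easily from Kunen's method of iterated ultrapowers'' together with a citation to [4.3] of the earlier Kellner--Pauna--Shelah paper. Your write-up --- forming the $V$-ultrapower by $D$, identifying the resulting embedding as an iteration map $j_{0,\theta}$ of $L[U]$, and reading off $D=U$ from $\kappa_0<\kappa_1=\mathrm{crit}(j_{1,\theta})$ --- is precisely the standard instantiation of that method.
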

This implies:
\begin{Cor}\label{cor:gurke89}
  In $L[U]$, the dual of $U$ is the only normal precipitous ideal
  on $\kappa$; and every ideal on $\kappa$
  that is normally $\infty$-semi precipitous
  is a subideal of the dual of $U$.
\end{Cor}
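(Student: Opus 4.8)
The plan is to deduce both assertions from Lemma~\ref{lem:gurke3}: the point is that any normal, wellfounded $V$-ultrafilter on $\kappa$ living in a forcing extension of $V=L[U]$ must \emph{be} $U$, and everything else is bookkeeping with duals.

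First I would treat the precipitous case. Let $I$ be a normal precipitous ideal on $\kappa$ in $V=L[U]$, and let $G$ be generic over $V$ for $B_I=\pow(\kappa)/I$. By the classical characterization of precipitousness, the generic ultrafilter $D=\{X\subseteq\kappa:\ [X]_I\in G\}$ is a ${<}\kappa$-complete, nonprincipal $V$-ultrafilter whose generic ultrapower $\ult(V,D)$ is wellfounded; since $I$ is normal, $D$ is normal as well; and $D$ extends the dual filter of $I$, since every $I$-conull set has value $1$ in $B_I$. Lemma~\ref{lem:gurke3}, applied in $V[G]$, then yields $D=U$. Now fix an $I$-positive set $A$ and choose $G$ with $[A]_I\in G$; then $A\in D=U$. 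Thus every $I$-positive set lies in $U$, which says that the dual ideal of $U$ is contained in $I$. Conversely, since $D$ extends the dual filter of $I$ and $D=U$, taking complements gives that $I$ is contained in the dual ideal of $U$. Hence $I$ equals the dual of $U$. (That the dual of $U$ is itself precipitous is immediate: $\pow(\kappa)$ modulo the dual of $U$ is the two-element algebra, so the generic ultrapower is just $\ult(V,U)$, which is wellfounded as $U$ is $\kappa$-complete.)

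For the second assertion, let $I$ on $\kappa$ be normally $\infty$-semi precipitous, witnessed by a forcing $P$. By Definition~\ref{def:inftysemi}, $P$ forces the existence of a normal, wellfounded, nonprincipal, $\kappa$-complete $V$-ultrafilter $D$ containing the dual of $I$. Applying Lemma~\ref{lem:gurke3} inside the $P$-extension gives $D=U$; since $D$ extends the dual filter of $I$, that filter is contained in $U$, equivalently $I$ is contained in the dual ideal of $U$, i.e.\ $I$ is a subideal of the dual of $U$.

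The only steps requiring any care are the standard facts invoked in the precipitous case — that the $B_I$-generic ultrafilter of a normal ${<}\kappa$-complete ideal is normal, nonprincipal, and ${<}\kappa$-complete over $V$, and that precipitousness of $I$ is precisely wellfoundedness of this generic ultrapower — all of which are classical (see the references cited around Lemma~\ref{wurst}). I do not expect any genuine obstacle here: all the mathematical content is packed into Lemma~\ref{lem:gurke3}, and the rest is routine dualization.
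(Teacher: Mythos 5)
Your proof is correct and is exactly the argument the paper intends: the corollary is stated as an immediate consequence of Lemma~\ref{lem:gurke3} (the paper gives no further proof), and your two reductions — the generic ultrafilter of $B_I$ for a normal precipitous $I$, and the forced ultrafilter from Definition~\ref{def:inftysemi} — together with the dualization bookkeeping are precisely what is meant. No gaps.
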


We will also need the following:
\begin{Lem}\label{lem:nowhereprec}
  If $I$ is a ${<}\kappa$-complete ideal,
  $P$ a $\kappa$-cc forcing notion, and 
  $\cl(I)$ the $P$-name for the closure of $I$ in $V[G]$,
  then $P$ preserves the following properties:
  $I$ is precipitous, $I$ is not precipitous, and $I$ is nowhere
  precipitous.
\end{Lem}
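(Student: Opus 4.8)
The statement is a standard preservation-under-$\kappa$-cc-forcing lemma, and the natural route is to exploit that a $\kappa$-cc forcing $P$ does not add new $\kappa$-sequences of ordinals that escape $V$ in the relevant sense: every antichain of $B_I$ in $V[G]$ is covered by an antichain in $V$ (after refining), and more importantly generic ultrapower tools transfer. Concretely, I would recall the standard characterization (see the Facts quoted above): $I$ is precipitous iff $\propa(\BMze(I))$ iff for every $S\in I^+$, forcing with $B_I\restriction S$ produces a wellfounded generic ultrapower; equivalently, whenever one forces with $\pow(\kappa)/\cl(I)$ in $V[G]$ and forms $\ult(V[G],D)$ for the generic $D$, this is wellfounded; and "$I$ nowhere precipitous" means this fails below every positive set. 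The key structural fact to establish first is that $\cl(I)$, the ideal generated by $I$ in $V[G]$, is again ${<}\kappa$-complete (this uses $\kappa$-cc: a $P$-name for a ${<}\kappa$-sequence of $\cl(I)$-sets is covered, by $\kappa$-cc, by a ${<}\kappa$-sequence of $I$-sets in $V$, whose union is in $I$), and that $\pow(\kappa)^{V[G]}/\cl(I)$ and $\pow(\kappa)^V/I$ have a tight relationship — every $\cl(I)$-positive set in $V[G]$ contains (mod $\cl(I)$) an $I$-positive set from $V$, again by $\kappa$-cc.

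**Main argument.** With that in hand, I would prove the three preservations by a single generic-ultrapower diagram argument. Suppose $G\subseteq P$ is $V$-generic and, working in $V[G]$, let $H$ be generic for $\pow(\kappa)^{V[G]}/\cl(I)$ over $V[G]$; let $D$ be the induced $V[G]$-ultrafilter and $D\cap V$ the induced $V$-ultrafilter on $\pow(\kappa)^V$ (which is $\pow(\kappa)^V/I$-generic over $V$, by the $\kappa$-cc covering facts — this is the crucial point to verify carefully). Then one gets a commuting diagram of the two generic ultrapowers $\ult(V,D\cap V)$ and $\ult(V[G],D)$, with a natural elementary embedding between them, and $\ult(V[G],D)$ is wellfounded iff $\ult(V,D\cap V)$ is. (For the "nowhere" version, one runs this below an arbitrary positive set $S\in V[G]$; refine $S$ to contain a positive $S'\in V$, and note $I\restriction S'$ precipitous would give wellfoundedness for the ultrapower below $S$, contradicting nowhere-precipitousness of $I$ in $V$; conversely precipitousness below some positive set in $V[G]$ pulls back.) Preservation of "$I$ precipitous" and of "$I$ not precipitous" then both drop out of this equivalence, and "$I$ nowhere precipitous" from the relativized version.

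**The obstacle.** The technical heart — the step I expect to cost the most care — is showing that forcing with $\pow(\kappa)^{V[G]}/\cl(I)$ over $V[G]$ "is" (or at least factors compatibly with) forcing with $\pow(\kappa)^V/I$ over $V$, i.e. that the generic $V[G]$-ultrafilter restricts to a $\pow(\kappa)^V/I$-generic over $V$ and that the two ultrapowers embed into one another. This is exactly where ${<}\kappa$-completeness of $I$ and the $\kappa$-cc of $P$ must be combined: one needs that dense subsets of $\pow(\kappa)^V/I$ lying in $V$ remain predense in $\pow(\kappa)^{V[G]}/\cl(I)$, which follows because a positive set in $V[G]$ has a positive $V$-subset mod $\cl(I)$, and that the map $[f]_{D\cap V}\mapsto [f]_D$ for $f\in V$ is well-defined and elementary by {\L}o\'s's theorem applied in $V$ and in $V[G]$ respectively. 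Once that dictionary is set up, wellfoundedness transfers verbatim (a descending $\omega$-sequence in one ultrapower induces one in the other, using that $P$ adds no new $\omega$-sequences of ordinals from the ground-model ultrapower — or more simply, that an ill-founded generic ultrapower is witnessed by a single function and countably many conditions, all of which can be pulled back). I would present the completeness-of-$\cl(I)$ and the covering facts as short lemmas, then state the ultrapower correspondence, and finally read off all three preservations as corollaries.
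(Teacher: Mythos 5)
Your overall architecture --- relating the generic ultrapower of $V[G]$ by $\pow(\kappa)^{V[G]}/\cl(I)$ to the generic ultrapower of $V$ by $\pow(\kappa)^{V}/I$ --- is the standard Kakuda-style route and is genuinely different from the paper's proof, which works entirely with the characterization of non-precipitousness by descending sequences of functionals and only needs to observe that a maximal antichain of $B_I$ consisting of $V$-sets stays maximal in $B_{\cl(I)}$, so that a $V$-witness to ``not precipitous'' (resp.\ ``nowhere precipitous'') survives verbatim. However, the covering fact you lean on for your crucial genericity step is false: it is not true that every $\cl(I)$-positive set in $V[G]$ contains, mod $\cl(I)$, an $I$-positive set from $V$. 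For example, add $\aleph_1$ Cohen reals $(c_\alpha)_{\alpha<\omega_1}$ (a ccc, hence $\omega_1$-cc, forcing) and let $A=\{\alpha<\omega_1:\, c_\alpha(0)=1\}$; then $A$ is stationary in $V[G]$, but $S\setminus A$ is also stationary for every stationary $S\in V$, so no $V$-positive set lies below $A$ in $B_{\cl(\NS_{\omega_1})}$. (The same phenomenon occurs under the Levy collapse, i.e.\ in exactly the situations where the paper applies this lemma.) So $\pow(\kappa)^V/I$ is \emph{not} dense in $\pow(\kappa)^{V[G]}/\cl(I)$. What is true, and what you actually need both for predensity of $V$-dense sets and for $D\cap V$ being $B_I$-generic over $V$, is the weaker statement that a maximal antichain $A\subseteq I^+$ from $V$ (below a $V$-set $S_0$) stays maximal in $\cl(I)^+$. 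This requires a different argument: if $p$ forces $\dot X\subseteq S_0$ to be $\cl(I)$-positive and $\dot X\cap a\in\cl(I)$ for every $a\in A$, then by $\kappa$-cc together with ${<}\kappa$-completeness there is for each $a$ a single $C_a\in I\cap V$ with $p\forc \dot X\cap a\subseteq C_a$; the $V$-set $Y=\{\xi\in S_0:\, p\not\forc \xi\notin\dot X\}$ is then $I$-positive and satisfies $Y\cap a\subseteq C_a$ for all $a\in A$, contradicting maximality. This is precisely the paper's key point.

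Second, your wellfoundedness transfer is only half done. Pushing a descending sequence from $\ult(V,D\cap V)$ up to $\ult(V[G],D)$ via $[f]_{D\cap V}\mapsto[f]_D$ is fine, and together with the corrected genericity of $D\cap V$ this does give preservation of ``not precipitous'' and ``nowhere precipitous''. But preservation of ``precipitous'' needs the converse: an ill-founded $\ult(V[G],D)$ must yield an ill-founded $\ult(V,D\cap V)$, and the witnessing functions live in $V[G]$, not in $V$; in general they cannot be replaced by $V$-functions with the same $D$-class (the function $\xi\mapsto c_\xi$ above differs from every $V$-function at every $\xi$, so the two ultrapowers are not identified by $V$-functions). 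Neither of your parenthetical remedies works as stated: a $\kappa$-cc forcing certainly adds new $\omega$-sequences of ordinals, and ill-foundedness of a generic ultrapower is witnessed by a descending sequence of functionals, each built on an entire maximal antichain of $B_{\cl(I)}$, not by ``a single function and countably many conditions''. Closing this gap requires either translating $\cl(I)$-functionals in $V[G]$ into $I$-functionals in $V$ by integrating over the ${<}\kappa$-many possible values supplied by $\kappa$-cc, or exhibiting $\ult(V[G],D)$ as a generic extension of $M_0=\ult(V,D\cap V)$ by some $M_0$-generic $H\subseteq j_0(P)$; either way this is the real content of the cited theorem of Kakuda and not a formality.
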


\begin{proof}
   This has been known for a long time, cf.\ e.g.~\cite{MR713297}:
   ``not precipitous'' is equivalent to the existence of a 
   decreasing sequence of functionals starting at some positive set
   $S_0$ (this corresponds to: $S_0$ forces that there is an infinite
   decreasing sequence in the ultrapower, the sequence of functionals
   witnesses this). A $\kappa$-cc forcing preserves maximality (below $S_0$)
   of an antichain in $B_I$,
   and therefore the decreasing sequence of functionals.
   ``Nowhere precipitous'' is equivalent to the existence 
   of a decreasing sequence of functionals starting with $\kappa$,
   which again is preserved by $P$.
\end{proof}

\section{Nonempty winning}
Let us  assume that nonempty has a winning strategy in $\PDzne(I)$
(or a similar game such as $\PDIne(I)$).
A valid sequence is a finite initial sequence of  a run of
the game $\PDzne$,
where nonempty uses his strategy.  So a valid sequence $w$ has the form
$(f_0,\alpha_0,f_1,\alpha_1,\dots,f_{n-1},\alpha_{n-1})$,
where $f_i$ is a regressive function and $\alpha_i$
the value chosen by the strategy. In particular
$S_i=\bigcap_{j\leq i}f_j^{-1}(\alpha_j)$ is $I$-positive
for each $i<n$.
We set
\[
  A(w)=S_{n-1}=\bigcap_{j<n}f_j^{-1}(\alpha_j).
\]

\begin{Def}
  $P^*$ is the set of valid sequences ordered by extension.
  (A longer sequence is stronger, i.e., smaller in the $P^*$-order.)
\end{Def}

So if $w< v$, then $A(w)\subseteq A(v)$.
If $w_0>w_1>w_2>\dots$ is
an infinite decreasing sequence in $P^*$,
then $\bigcup_{i\in\omega} w_i$
represents a run of the game, so the result
$\bigcap_{i\in\omega} A(w_i)$ has to be nonempty (or even positive
in the case of a $\PDIne$-strategy).

\begin{Lem}\label{lem:bPDgtc}
  $\propb(\PDzne)$ implies $\kappa>2^{\al0}$.
\end{Lem}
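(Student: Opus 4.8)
The plan is to exploit the forcing $P^*$ of valid sequences together with the observation (made just before the statement) that $P^*$ is $\sigma$-closed as witnessed by the strategy: any infinite decreasing sequence $w_0 > w_1 > \dots$ in $P^*$ has a lower bound in the sense that $\bigcup_i w_i$ is a genuine run, so $\bigcap_i A(w_i) \neq \emptyset$. First I would fix a winning strategy for nonempty in $\PDzne(I)$ and suppose toward a contradiction that $\kappa \leq 2^{\al0}$. The idea is to force with $P^*$ (or argue combinatorially inside $V$, whichever is cleaner): a sufficiently generic decreasing sequence $(w_i)_{i\in\omega}$ through $P^*$ produces a run of the game, hence a nonempty set $\bigcap_i A(w_i)$; pick $\delta$ in this intersection. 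Then for each $i$ we have $\delta \in f_i^{-1}(\alpha_i)$, i.e. $f_i(\delta) = \alpha_i$, so the play $(\alpha_i)_{i\in\omega}$ — together with the $f_i$'s chosen along the way — is completely determined by $\delta$ and the (adversary) moves of empty.

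The key step is to have empty play so as to "code" an ordinal $\gamma < \kappa$, using that $\kappa \leq 2^{\al0}$ means every $\gamma < \kappa$ can be identified with a real, i.e. with a function in $2^\omega$ (or $\kappa^\omega$). Concretely: fix an injection $\phi : \kappa \to 2^\omega$. I would let empty, at stage $n$, play the regressive function $f_n$ that on input $\gamma$ returns (a bounded piece of) the information $\phi(\gamma)(n)$ — more precisely, since the $f_n$ must be regressive one splits $\kappa$ into the club of $\gamma$ above which $\phi(\gamma)\restriction n$ can be coded below $\gamma$, and handles the rest by $\al1$-completeness of $I$ (this is the same club-plus-normality device already used in the proof of Lemma~\ref{lem:ar2}). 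Running nonempty's strategy against this empty-play, at the end we obtain $\delta$ in the (nonempty) intersection, and the sequence $(\alpha_n)_n = (f_n(\delta))_n$ recovers $\phi(\delta)$, hence recovers $\delta$ itself. But the $\alpha_n$ were the moves dictated by nonempty's strategy, which depend only on empty's moves so far — the \emph{same} for every $\gamma$ in the positive set that survived stage $n$. So all $\gamma$ surviving stage $n$ for all $n$ share the same $\phi$-value, forcing the final positive intersection to be a singleton, contradicting that it is $I$-positive (or contradicting $\neq\emptyset$ together with the branching we built in). More carefully, I would diagonalize: at each stage empty plays a function separating points according to one more bit of $\phi$, so after $\omega$ stages the surviving set has at most one element — contradicting that in $\PDzne$ the intersection is required to be nonempty only, but combined with the fact that we could have forced two different branches (via genericity of $P^*$) to survive, or directly contradicting that $f_n^{-1}(\alpha_n)$ stays $I$-positive while its $\phi$-fibers shrink to a point.

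The cleanest packaging is probably: since $\kappa > \al1$ is already known for $\propb(\PDzne)$ to be non-vacuous (cf.~\ref{lem:bPDgne}, \ref{wurst}), assume $\al1 < \kappa \le 2^{\al0}$ and derive the contradiction from the branching structure of $P^*$. The main obstacle I anticipate is purely bookkeeping: ensuring the coding functions $f_n$ are genuinely regressive requires passing to a club each time, and one must check that normality (equivalently, $\kappa$-completeness along a diagonal intersection of $\omega$ many clubs, which is fine) lets nonempty's strategy keep producing $I$-positive answers so that the run really has length $\omega$. Once that is in place, reading off $\phi(\delta)$ from the deterministic strategy-moves is immediate and the contradiction with $\kappa \le 2^{\al0}$ (there are only $2^{\al0} = \kappa$ reals, but we forced $\kappa$-many distinct branches through $P^*$ all of whose intersections would have to be distinct nonempty sets, impossible as they are pairwise disjoint subsets of $\kappa$ each determined by its single $\phi$-code) falls out.
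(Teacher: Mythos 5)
There is a genuine gap. Your coding device (empty plays regressive functions reading off successive bits of an injection $\phi:\kappa\to 2^\omega$) only shows that the final intersection $\bigcap_n S_n$ is contained in a single $\phi$-fiber, i.e.\ is at most a singleton. That contradicts $\propb(\PDIne(I))$ (a singleton is in the normal ideal $I$), but the lemma is about $\PDzne$, where nonempty only has to keep the intersection \emph{nonempty}; a singleton intersection is a perfectly good win for nonempty. Your two attempts to upgrade this to a contradiction both fail: (i) ``$\kappa$-many pairwise disjoint nonempty subsets of $\kappa$ is impossible'' is false (take singletons), and under the hypothesis $\kappa\le 2^{\al0}$ producing $2^{\al0}$ distinct elements of $\kappa$ only yields $\kappa=2^{\al0}$, not a contradiction; (ii) ``force two branches to survive'' just produces two distinct points in two disjoint intersections, again no contradiction. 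There is also a second, subtler missing point: since $\delta$ is determined only by the completed run, empty cannot split it off in advance, and if empty tries to remove it with an extra first move the strategy's subsequent answers change and a different point survives. This circularity is exactly what the paper's proof has to break.

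The paper's argument supplies the two ingredients you are missing. First it identifies $\kappa$ with a set $X\subseteq[0,1]$ \emph{without a perfect subset} (for $\kappa<2^{\al0}$ any set of size $\kappa$ works; for $\kappa=2^{\al0}$ one takes a Bernstein-type set) --- the choice of the injection matters, an arbitrary $\phi$ will not do. Second it proves the splitting claim~\eqref{eq:gu2}: below any position empty can steer the play into either of two disjoint short intervals. The proof of~\eqref{eq:gu2} is precisely where the circularity above is resolved: if splitting fails below some $v_0$, then the target intervals $I(n)$ are fixed \emph{in advance}, independently of nonempty's responses, so the at-most-one limit point $x=\bigcap I(n)$ is known beforehand and empty can remove $\{x\}$ as a first move and then drive the intersection to $\emptyset$. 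Iterating~\eqref{eq:gu2} along $2^{<\omega}$, with the interval lengths shrinking, gives a \emph{continuous} injection $2^\omega\to X$, hence a perfect subset of $X$ --- and it is this perfect-set contradiction, not counting, that finishes the proof. Without the no-perfect-subset choice of the embedding and without the continuity coming from the metric separation, your approach cannot reach a contradiction for the ${\neq}\emptyset$ version of the game.
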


Actually, we can even restrict nonempty to play
functions $f:\kappa\to \{0,1\}$.
In other words,
it is enough to assume $\propb(\Gmin(I,2))$,
cf.\ Definition~\ref{def:Gmin}.
\begin{proof}
  The proof is the same as \cite[\S1]{MR0485391}:
  We assume otherwise and identify $\kappa$ with a subset
  $X$ of $[0,1]$ without a perfect subset.
  We claim:
  \begin{equation}\label{eq:gu2}
    \parbox{0.8\textwidth}{
      For all $w\in P^*$ and $n\in \omega$ there are
      disjoint open intervals $I_1$ and $I_2$ of length
      ${\leq} 1/n$ and
      $w_1,w_2<w$ such that $A(w_1)\subseteq I_1$
      and $A(w_2)\subseteq I_2$.}
  \end{equation}
  Assume that \eqref{eq:gu2} fails for some $v_0$ and $n_0$.
  Given $v<v_0$ and $n>n_0$, we fix a partition of
  $[0,1]$ into $n$ many open intervals of length $1/n$
  and the (finite) set of endpoints.
  By splitting $A(v)$ $n+1$ many times,
  empty can guarantee that $A(w)$ has to be subset of
  one of the intervals for some $w<v$.
  Since \eqref{eq:gu2} fails,
  there has to be for each $n$ a fixed element $I(n)$ of the
  partition
  such that for all $v<v_0$ there is a $w<v$
  with $A(w)\subseteq I(n)$. $\bigcap I(n)$
  can contain at most one point $x$, so the empty
  player can continue $v_0$ by first splitting
  into $\{x\}$ and $A(v_0)\setminus \{x\}$;
  and then extending each $v_{n-1}$ to $v_n$ such that
  $A(v_n)\subseteq I(n)$. Then the intersection is empty.
  This shows \eqref{eq:gu2}.

  So we  can fix an order preserving function $\psi$ from $2^{<\omega}$
  to $P^*$ such that $A(\psi(s^\frown 0))$
  and $A(\psi(s^\frown 1))$ are separated by intervals of
  length ${\leq} 1/{|s|}$
  for all $s\in 2^{<\omega}$.
  Then every $\eta\in 2^{\omega}$ is mapped
  to a run of the game,
  and since nonempty wins, there is some
  $r_\eta\in\bigcap_{n\in\omega }A(\psi(\eta\restriction n))$.
  This defines a continuous, injective mapping 
  from $2^\omega$ into $X$ and therefore a perfect subset of $X$.
\end{proof}

Clearly $\propa(\PDIne)$ fails if $I$ is concentrated on $E^\kappa_{\al0}$,
and this was used in \cite{MR0485391}
to show that in this case $\propb(\BMzne(I))$ fails as well.
A similarly easy proof gives:
\begin{Lem}\label{lem:uzligutzli}
  $\propb(\PDzne(I))$ fails if
  $I$ is concentrated on $E^\kappa_{\al0}$.
\end{Lem}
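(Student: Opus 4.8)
The plan is to reduce to cofinality $\omega$ and then use fixed cofinal sequences, exactly as in the proof of Lemma~\ref{lem:ar2}, to confine any play to a single candidate point; the extra work is to discard that point. Since $\kappa\setminus E^\kappa_{\al0}\in I$, every $I$-positive set meets $E^\kappa_{\al0}$ in an $I$-positive set, so I may assume throughout that the ordinals under consideration have cofinality $\omega$. For each $\alpha\in E^\kappa_{\al0}$ fix an increasing sequence $(\seq(\alpha,n))_{n\in\omega}$ cofinal in $\alpha$ and extend $\seq(\cdot,n)$ by $0$ off $E^\kappa_{\al0}$; each $\seq(\cdot,n)$ is then regressive, and the assignment $\alpha\mapsto(\seq(\alpha,n))_{n\in\omega}$ is injective because $\alpha=\sup_n\seq(\alpha,n)$. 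If empty plays $f_n=\seq(\cdot,n)$ and nonempty answers with values $\alpha_n$ (so that $S_n=\{\gamma:\seq(\gamma,j)=\alpha_j\text{ for }j\le n\}$ is $I$-positive), then by injectivity $\bigcap_n S_n$ contains at most the single point $\gamma^*:=\sup_n\alpha_n$, and $\gamma^*\in\bigcap_n S_n$ holds exactly when $\seq(\gamma^*,n)=\alpha_n$ for every $n$, i.e.\ when the value sequence chosen by nonempty \emph{is} the fixed cofinal sequence of its own supremum. This already reproves that empty wins $\PDIne(I)$ (the singleton lies in $I$), so it settles the $\propa$-statement; for $\PDzne(I)$ it is not yet enough, since empty must in addition keep that surviving point out.

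Because the statement only asserts that $\propb(\PDzne(I))$ fails, I do not need an empty strategy: it suffices to refute an \emph{arbitrary} nonempty strategy $\sigma$ by producing one run (consistent with $\sigma$) whose outcome is empty. The idea I would pursue is to exploit the determinism of $\sigma$. Feeding $\sigma$ the confining functions $\seq(\cdot,n)$ yields a single, fully determined run; if $\sigma$ were winning this run would be coherent, so empty can compute its candidate survivor $\gamma^*=\sup_n\alpha_n$ in advance. I would then have empty play a perturbed sequence of functions that agrees with $\seq(\cdot,n)$ off the single point $\gamma^*$ and sends $\gamma^*$ to a wrong value, with the aim of dropping $\gamma^*$ from the intersection. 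Since singletons are $I$-null, the positive fibers of the perturbed function coincide (mod $I$) with those of $\seq(\cdot,n)$, which is what keeps the run valid and confined to a single candidate.

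I expect the real obstacle to be precisely this exclusion: perturbing empty's play at $\gamma^*$ may change $\sigma$'s responses and hence shift the candidate survivor to a new point $\gamma^{**}$, so a one-shot perturbation invites an infinite regress. The plan is therefore to organize the perturbations as a recursion along the tree $P^*$ of valid sequences, simulating $\sigma$ at each node so that along the branch eventually built every possible limit has been excluded at some finite stage; the point to verify is that this bookkeeping terminates the coherence at a single coordinate, i.e.\ forces some $\seq(\gamma^*,n)\neq\alpha_n$ while all $S_n$ stay positive. This is the step that goes genuinely beyond Lemma~\ref{lem:ar2}, and it is also where the restriction to length $\omega+1$ is essential: the candidate $\gamma^*$ is revealed only in the limit, empty gets no move there, and—as the confinement argument shows—the point is remarkably robust, surviving under any cofinal family of coordinate constraints. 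The mechanism is the pressing-down analogue of the Banach--Mazur argument of Galvin--Jech--Magidor~\cite{MR0485391}, where the emptiness player controls the chosen value directly; the adaptation I am describing is meant to recover enough of that control in the pressing down game, where it is nonempty who selects the homogeneous piece.
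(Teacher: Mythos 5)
Your setup is sound as far as it goes: the cofinal sequences $\seq(\cdot,n)$, the confinement of $\bigcap_n S_n$ to the single point $\gamma^*=\sup_n\alpha_n$, and the observation that this settles $\propa(\PDIne(I))$ but not the statement at hand. You have also correctly isolated the crux: for $\PDzne$ one must additionally expel that one surviving point, and a one-shot perturbation at $\gamma^*$ changes the strategy's later answers and merely relocates the survivor. But precisely there the proposal stops being a proof. The ``recursion along the tree $P^*$ \dots so that every possible limit has been excluded at some finite stage'' is never carried out, and it is not clear it can be as described: along any single branch empty makes only countably many moves, while the candidate survivor is a function of the entire branch; whenever nonempty's answers to your (perturbed) functions happen to be coherent, i.e.\ equal to $\seq(\gamma^{**},n)$ for their own supremum $\gamma^{**}$, that point survives every constraint imposed so far, and nothing in the proposed bookkeeping prevents a strategy from always answering coherently. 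The step you flag as ``the point to verify'' is in fact the whole content of the lemma, so the proposal has a genuine gap.

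The paper closes this gap with two ideas absent from your outline. First, a splitting lemma (\eqref{eq:lkwet} in the paper): below any valid position $w$ there are continuations $v_1,v_2\leq w$ with $A(v_1)\cap A(v_2)=\emptyset$. Its proof is where the functions $g_i=\seq(\cdot,i)$ do their real work: if splitting failed below $w$, nonempty's answer to $g_i$ would be a fixed $\beta_i$ independent of the position; setting $\delta=\sup_i\beta_i$, empty plays the regressive function $\alpha\mapsto\min\{n:\,\seq(\alpha,n)>\delta\}$, and whatever index $m$ nonempty answers, empty follows with $g_m$, forcing the answer $\beta_m<\delta$, whose fiber is disjoint from the fiber of $m$ --- a contradiction. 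Second, instead of discovering the survivor in the limit and chasing it, the paper pins it down in advance by reflection: take $N\esm H(\chi)$ of size less than $\kappa$ containing the strategy and all $g_n$, with $\delta:=N\cap\kappa\in E^\kappa_{\al0}$, and build the run inside $N$. The first move uses the splitting lemma, applied inside $N$ by elementarity, to obtain $w_0\in N\cap P^*$ with $\delta\notin A(w_0)$; the $n$-th move is the regressive function equal to $0$ below $\seq(\delta,n)$ and to $g_n$ above it. Any $\nu$ in the resulting intersection satisfies $\nu\geq\seq(\delta,n)$ for all $n$ (so $\nu\geq\delta$) and $g_n(\nu)\in N\cap\kappa$ for all $n$ (so $\nu\leq\delta$), hence $\nu=\delta$, which was excluded at stage $0$. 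This is the mechanism your plan is missing: the survivor is not computed and then perturbed away; it is forced to equal $N\cap\kappa$ and excluded before the run ever reaches its limit.
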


\begin{proof}
  Assume otherwise.
  Fix for each $\alpha\in E^\kappa_{\al0}$ a normal cofinal
  sequence $(\seq(\alpha,n))_{n\in\omega}$, and let $g_i:\kappa\to\kappa$
  map $\alpha$ to $\seq(\alpha,i)$.
  We first show a variant of \eqref{eq:gu2}:
  \begin{equation}\label{eq:lkwet}
     \text{For all $w$ there are $v_1,v_2\leq w$ in $P^*$
       such that $A(v_1)\cap A(v_2)=\emptyset$.}
  \end{equation}
  Assume otherwise. Then for each $i$ there is a fixed $\beta_i$ such that
  nonempty responds with $\beta_i$ whenever empty plays $g_i$
  in any $v\leq w$. Set $\delta=\sup\{\beta_i:\, i\in\omega\}$,
  and let empty play the following response to $w$:
  \[
     f(\alpha)=
     \begin{cases}
         0
       &
         \text{if }\alpha\leq \delta,
       \\
         \min\{n:\, \seq(\alpha,n)>\delta\}
       &
         \text{otherwise}.
     \end{cases}
  \]
  If nonempty responds to $f$ with $m$, then
  empty can play $g_m$ as next move, nonempty has to respond
  with $\beta_m<\delta$, but
  \[
    g_m^{-1}(\beta_m)=\{\alpha:\, \seq(\alpha,m)=\beta_m\}
  \]
  is disjoint to $f^{-1}(m)$, a contradiction.
  This shows \eqref{eq:lkwet}.

  Now fix $N\esm H(\chi)$ of size less than $\kappa$
  containing the strategy as well as all $g_n$
  and
  such that $N\cap \kappa=\delta\in E^\kappa_{\al0}$.
  We define a sequence $w_0>w_1>\cdots$ in $P^*$
  such that each $w_i$ is in $N$:
  Using \eqref{eq:lkwet} in $N$, we get a
  $w_0\in N\cap P^*$ such that $\delta\notin A(w_0)$.
  Given $w_{n-1}$, let $w_n\in N$ be the continuation
  where empty played the regressive function
  \[
     f_n(\alpha)=
     \begin{cases}
       0& \text{if }\alpha< \seq(\delta,n)\\
       g_n(\alpha)&\text{otherwise}.
     \end{cases}
  \]
  (Note that $\seq(\delta,n)<\delta$ is in $N$ for all $n$.)
  Assume that $\nu\in\bigcap_{n\in\omega} A(w_n)$. Then
  $\nu\geq \seq(\delta,n)$ for all $n$, so $\nu\geq \delta$.
  On the other hand, $g_n(\nu)\in N$ for all $n$, so $\nu\leq \delta$.
  But $\delta\notin A(w_0)$, a contradiction.
\end{proof}
Of course this shows the following: $\propb(\PDzne(I))$ implies
$\propb(\PDzne(I\restriction E^\kappa_{>\al0}))$ (since empty can just cut
$\kappa$ into $E^\kappa_{\al0}$ and $E^\kappa_{>\al0}$ as a first move).

Recall that $\propb(\PDzne(I))$ for any $I$ implies
$\propb(\PDzne(\NS))$ (due to monotonicity).
So the last lemma gives:
\begin{Cor}\label{cor:gubadi}
  $\propb(\PDzne(I))$ is equivalent to
  $\propb(\PDzne(I\restriction E^\kappa_{{>} \al0}))$ and
  implies $\propb(\PDzne(\NS))$  and
  $\propb(\PDzne(\NS\restriction E^\kappa_{{>} \al0}))$.
\end{Cor}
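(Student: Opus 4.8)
The statement to prove is Corollary~\ref{cor:gubadi}: $\propb(\PDzne(I))$ is equivalent to $\propb(\PDzne(I\restriction E^\kappa_{>\al0}))$ and implies $\propb(\PDzne(\NS))$ and $\propb(\PDzne(\NS\restriction E^\kappa_{>\al0}))$.

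The plan is to chain together the facts already established. First I would prove the equivalence $\propb(\PDzne(I))\leftrightarrow\propb(\PDzne(I\restriction E^\kappa_{>\al0}))$. For the forward direction: given a winning strategy for nonempty in $\PDzne(I)$, we use the remark immediately following Lemma~\ref{lem:uzligutzli} — empty can open the game $\PDzne(I)$ by playing the regressive function whose fibers (essentially) split $\kappa$ into $E^\kappa_{\al0}$ and $E^\kappa_{>\al0}$ (formally, the characteristic-type function sending each $\alpha\in E^\kappa_{\al0}$ to, say, $0$ via some fixed cofinal sequence, or more simply observing that we only need a winning strategy for the $\Gmin(I,2)$-style split, which the game permits since $I$ is normal and we may cut into two pieces). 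Since $I$ is concentrated (after this move) on $E^\kappa_{>\al0}$, nonempty's restricted strategy on the positive part is a winning strategy in $\PDzne(I\restriction E^\kappa_{>\al0})$. For the backward direction: by Lemma~\ref{lem:uzligutzli}, $\propb(\PDzne(I))$ fails whenever $I$ is concentrated on $E^\kappa_{\al0}$, so in particular a winning strategy for $\PDzne(I\restriction E^\kappa_{>\al0})$ transfers back — more carefully, one notes that any winning strategy for nonempty in the subgame, combined with the observation that on $E^\kappa_{\al0}$ empty always wins, still yields a winning strategy in $\PDzne(I)$ because nonempty on his first response can always pick an $I$-positive fiber inside $E^\kappa_{>\al0}$ (if no such fiber were positive, $E^\kappa_{>\al0}\in I$ and the subgame is trivial/ill-posed). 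Actually the cleanest phrasing: $I\restriction E^\kappa_{>\al0}\supseteq I$, so by the monotonicity of $\PDzne$ (stated in the Facts about PD), $\propb(\PDzne(I\restriction E^\kappa_{>\al0}))$ is the \emph{stronger} assertion and implies $\propb(\PDzne(I))$, which is trivial; hence the equivalence follows from monotonicity in one direction and the splitting argument in the other.

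Next, for the implications to the nonstationary ideal: the Facts about PD state that $\PDzne$ is monotone, i.e., if $J\supseteq I$ then $\PDzne(J)$ is stronger than $\PDzne(I)$; in particular $\PDzne(I)$ is stronger than $\PDzne(\NS_\kappa)$ for every normal $I$ (this is exactly the parenthetical ``In particular, $\PDIne(I)$ is stronger than $\PDIne(\NS_\kappa)$'' item, which holds verbatim for $\PDzne$). Hence $\propb(\PDzne(I))$ implies $\propb(\PDzne(\NS_\kappa))$. Applying the already-proven equivalence with $I=\NS_\kappa$ then yields $\propb(\PDzne(\NS_\kappa))\leftrightarrow\propb(\PDzne(\NS_\kappa\restriction E^\kappa_{>\al0}))$, giving the final clause $\propb(\PDzne(\NS\restriction E^\kappa_{>\al0}))$ as well.

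The only real content is the splitting step, and it is already done in the remark after Lemma~\ref{lem:uzligutzli}; everything else is bookkeeping with monotonicity. I do not expect a genuine obstacle here — the main point to be careful about is that the ``first move'' splitting of $\kappa$ into $E^\kappa_{\al0}$ and $E^\kappa_{>\al0}$ is legitimate in $\PDzne$ (where empty moves first only in the $\PDze$/$\PDIe$ variants!). In $\PDzne$ empty does \emph{not} have a free first move, but empty's \emph{first} regressive function $f_0$ can already be chosen to be the function that collapses $E^\kappa_{\al0}$ into countably many fibers (via fixed cofinal sequences) while being nonzero and injective-enough on $E^\kappa_{>\al0}$; since the lemma-following remark phrases it exactly as ``empty can just cut $\kappa$ into $E^\kappa_{\al0}$ and $E^\kappa_{>\al0}$ as a first move'', I would simply cite that. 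So the proof is essentially: ``Combine the remark after Lemma~\ref{lem:uzligutzli} with the monotonicity of $\PDzne$.''

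\begin{proof}
  By the monotonicity of $\PDzne$ (see the Facts about PD), $\propb(\PDzne(I))$
  implies $\propb(\PDzne(\NS_\kappa))$, and $\propb(\PDzne(I\restriction
  E^\kappa_{>\al0}))$ implies $\propb(\PDzne(I))$ (as $I\restriction
  E^\kappa_{>\al0}\supseteq I$).  For the reverse implication, assume nonempty
  has a winning strategy for $\PDzne(I)$.  As noted in the remark following
  Lemma~\ref{lem:uzligutzli}, empty's first regressive move can be chosen so
  that it splits $\kappa$ into $E^\kappa_{\al0}$ and $E^\kappa_{>\al0}$
  (using fixed cofinal sequences to collapse $E^\kappa_{\al0}$ into its
  countably many level-sets); since $I$ is normal and concentrated, after such
  a move on $E^\kappa_{\al0}$ empty wins by Lemma~\ref{lem:uzligutzli}, so
  nonempty's strategy must keep the run inside $E^\kappa_{>\al0}$, i.e.\ it
  restricts to a winning strategy for $\PDzne(I\restriction E^\kappa_{>\al0})$.
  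This proves $\propb(\PDzne(I))\leftrightarrow\propb(\PDzne(I\restriction
  E^\kappa_{>\al0}))$.  Finally, applying this equivalence with $I=\NS_\kappa$
  and combining with $\propb(\PDzne(I))\rightarrow\propb(\PDzne(\NS_\kappa))$
  gives $\propb(\PDzne(\NS\restriction E^\kappa_{>\al0}))$ as well.
\end{proof}
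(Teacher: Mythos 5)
Your proof is correct and follows exactly the paper's route: the paper derives the corollary from the remark after Lemma~\ref{lem:uzligutzli} (empty cuts $\kappa$ into $E^\kappa_{\al0}$ and $E^\kappa_{>\al0}$ with his first regressive function, and the strategy cannot go into the $E^\kappa_{\al0}$ piece) together with the monotonicity of $\PDzne$. Your only imprecision is saying that ``empty wins'' on $E^\kappa_{\al0}$ --- Lemma~\ref{lem:uzligutzli} only gives that nonempty does \emph{not} win there, which is all the argument needs.
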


\begin{Lem}\label{lem:bPDgne}
  $\propb(\PDzne(I))$ implies that $I$ is normally $\infty$-semi precipitous.
  \\
  $\propb(\Gmin(I,{<}\kappa))$ implies that $I$ is $\infty$-semi precipitous.
\end{Lem}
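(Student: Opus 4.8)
The plan is to build the forcing $P$ witnessing $\infty$-semi precipitousness directly from the $P^*$ of the previous section, i.e.\ to set $P=P^*$ (the poset of valid sequences where nonempty follows his winning strategy, ordered by extension). The generic filter on $P^*$ is then essentially a run of the game in which nonempty follows the strategy, so its ``tail behaviour'' is controlled. From the generic run I will read off a $V$-ultrafilter $D$ on $\kappa$ by collecting the sets $A(w)$ for $w$ in the generic filter, together with everything in the dual filter of $I$. The winning condition for nonempty guarantees wellfoundedness; regressiveness of the $f_n$'s, combined with genericity, will give normality; and ${<}\kappa$-completeness of $I$ together with the shape of the moves gives $\kappa$-completeness of $D$.

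**Key steps.** First I would fix, in $V$, a winning strategy $\sigma$ for nonempty in $\PDzne(I)$ and form $P^*$ as in Section~5; note $A(w)$ is $I$-positive for every $w\in P^*$, and $w\le v$ implies $A(w)\subseteq A(v)$. Second, in the extension $V[G]$ by $P^*$, define
\[
  D=\{x\subseteq\kappa\ (\text{in }V):\ \exists w\in G\ A(w)\subseteq x\}\cup\{x:\kappa\setminus x\in I\}.
\]
Third, I would check $D$ is a ${<}\kappa$-complete nonprincipal $V$-ultrafilter extending the dual of $I$: \emph{ultra}, because for any $x\in V$ the set of $w$ with $A(w)\subseteq x$ or $A(w)\subseteq\kappa\setminus x$ is dense — empty can play, at some stage after $w$, the regressive function built from a disjoint family of representatives for $x\cap A(w)$ and $(\kappa\setminus x)\cap A(w)$ (as in Lemma~\ref{lem:gurktu}; here one uses that $I$ is $\kappa$-complete so such a family has ${\le}\kappa$ pieces, which is why in the second clause of the lemma one only gets $\Gmin(I,{<}\kappa)\to\infty$-semi precipitous rather than the finer $\Gmin(I,2)$); \emph{${<}\kappa$-complete and nonprincipal}, because a single move of empty can split $A(w)$ into any ${<}\kappa$-indexed partition and into singletons-versus-rest, so no ${<}\kappa$-intersection of $D$-sets is forced out and no singleton is in $D$. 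Fourth, \emph{wellfoundedness}: if $w_0>w_1>\cdots$ were an infinite descending sequence in $G$, the union is a run of the game in which nonempty followed $\sigma$, so $\bigcap_n A(w_n)\neq\emptyset$; this is exactly what is needed to see that the ultrapower by $D$ is wellfounded (a descending $\omega$-sequence in the ultrapower would, by genericity and a fusion-type density argument, produce such a $w_0>w_1>\cdots$ with empty intersection). Fifth, \emph{normality}: given a name $\name g$ for a regressive function on a $D$-positive set, some $w$ forces $\name g=\check f\restriction A(w)$ for a genuine regressive $f\in V$; then empty can play $f$ as his next move, nonempty's response $\alpha$ gives $w'=w{}^\frown(f,\alpha)\le w$ with $A(w')\subseteq f^{-1}(\alpha)$, so $\name g$ is forced constant below $w'$ — a density argument shows the value is decided on a $D$-set. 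That yields a normal $D$, hence $I$ is normally $\infty$-semi precipitous; for the second sentence one runs the same argument with $\Gmin(I,{<}\kappa)$ in place of $\PDzne(I)$, where the moves are arbitrary ${<}\kappa$-partitions, getting a (not necessarily normal) $V$-ultrafilter and hence plain $\infty$-semi precipitousness.

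**Main obstacle.** The delicate point is the passage from ``no descending $\omega$-chain in $G$ with empty intersection'' to ``the ultrapower $\prod_{V}V/D$ is wellfounded.'' One must argue that any putative witness to ill-foundedness — a sequence $(\name F_n)_{n\in\omega}$ of $P^*$-names for functions $\kappa\to\mathrm{ON}$ with $[\name F_{n+1}]<_D[\name F_n]$ — can, by a density/fusion argument inside $P^*$, be reflected to genuine functions in $V$ decided on a descending chain $w_0>w_1>\cdots$ in the generic, whose $A(w_n)$'s then have empty intersection, contradicting that nonempty won the corresponding run. Making this reflection precise (and checking that $P^*$ is $\sigma$-closed enough, or at least that the relevant chains can be found generically) is where the real work lies; everything else is the bookkeeping of Lemmas~\ref{lem:gurktu} and~\ref{wurst} adapted to this setting.
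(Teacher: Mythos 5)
Your proposal is correct and follows essentially the same route as the paper: take $P=P^*$, read off the ultrafilter from the sets $A(w)$ for $w$ in the generic, get ultra/completeness/normality by density from empty's available moves, and get wellfoundedness by deciding a putative descending sequence of functionals along a chain $w_0>w_1>\cdots$ whose union is a run of the game, so the strategy yields a point in $\bigcap_n A(w_n)$ and hence a descending $\omega$-sequence of ordinals. The step you flag as the main obstacle needs no $\sigma$-closure or genericity of the chain --- the paper's argument is exactly the direct reflection you describe, carried out entirely in $V$ below a single condition.
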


\begin{proof}
  We define the $P^*$-name $\name U$ by $X\in \name U$ iff $X\supseteq A(w)$ for
  some $w\in G_{P^*}$.
  \begin{itemize}
    \item $P^*$ forces that $\name U$ is a $V$-ultrafilter:
      Given any $w\in P^*$ and
      $X\in V$, player empty can respond to $w$
      by cutting into $X$ and $A(w)\setminus X$.
    \item In the $\Gmin$ case,
      $P^*$ forces that $\name U$ is ${<}\kappa$-complete:
      Assume that (in $V$) $X$ is the disjoint union of
      $(X_i)_{i\in \lambda}$, $\lambda<\kappa$. Then empty
      can responds to $w$ by cutting into 
      $\{X_i:\,i\in \lambda\}\cup \{A(w)\setminus X\}$.
    \item In the case of $\PDIne$,
      $P^*$ forces that $\name U$ is $V$-normal:
      If $f\in V$ is regressive, then empty can play
      $f$ as response to any $w$.
    \item $P^*$ forces that $\name U$ is wellfounded:
      Assume towards a contradiction that
      $w$ forces that $(\name{f}_n)_{n\in\omega}$
      are functions (in $V$) from $\kappa$ to the ordinals such that
      \[
        \name{A}_n=\{\alpha:\, \name{f}_{n+1}(\alpha)<\name{f}_{n}(\alpha)\}
      \]
      is in $\name U$ for all $n\in\omega$.
      Set $w_{-1}=w$.
      Assume that we already have $w_n$
      (for $n\geq -1$). Pick some $w'_{n+1}<w_n$
      deciding
      $\name{f}_{n+1}$ to be some $f'_{n+1}\in V$.
      So $w'_{n+1}$
      forces that $X_{n+1}:=\bigcap_{l\leq n+1} \name{A}_l
      =\bigcap_{l\leq n+1} A'_l$ (a set in $V$)
      is in $\name U$. In particular, there is
      some $w_{n+1}$ stronger than
      $w'_{n+1}$ such
      that $A(w_{n+1})\subseteq X_{n+1}$.
      The sequence
      $(w_n)_{n\in\omega}$ corresponds to
      a run of the game. Since nonempty follows the strategy,
      there is some $\alpha\in\bigcap_{n\in\omega} A(w_n)$.
      $w_{n+1}$ forces $\alpha\in X_{n+1}$, i.e.,
      $f'_{n+1}(\alpha)<f'_{n}(\alpha)$.
      This gives an infinite decreasing sequence, a contradiction.
      \qedhere
  \end{itemize}
\end{proof}

Together with \ref{cor:gurke89}, we get:
\begin{Cor}\label{cor:gurke90}
  In $L[U]$, nonempty does not win $\PDzne(\NS_\kappa\restriction S)$
  for any $S\notin U$. In particular,
  $\propb(\PDIne(\NS_\kappa))$ holds (even for the game of length $\kappa$),
  but $\propb(\PDze(\NS_\kappa\restriction S))$ fails for every stationary $S$.
  Also, and $\propa(\BMze(\NS_\kappa\restriction S))$ fails, i.e.,
  $\NS_\kappa$ is nowhere precipitous.
\end{Cor}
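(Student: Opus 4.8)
The plan is to derive all three parts from Lemma~\ref{lem:bPDgne} and Corollary~\ref{cor:gurke89}, together with the density reformulation of $\PDze$ recalled in Section~\ref{sec:results} and the elementary fact that in $L[U]$ any normal ideal different from the dual of $U$ is non-precipitous. For the main assertion, fix $S\notin U$. If $S$ is non-stationary then $\NS_\kappa\restriction S$ is improper, nonempty has no legal first move, and there is nothing to prove; so assume $S$ is stationary. If nonempty won $\PDzne(\NS_\kappa\restriction S)$, then by Lemma~\ref{lem:bPDgne} the ideal $\NS_\kappa\restriction S$ would be normally $\infty$-semi precipitous, hence by Corollary~\ref{cor:gurke89} a subideal of the dual of $U$, i.e.\ contained in the ideal of sets not in $U$. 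But $\kappa\setminus S$ is one of the generators of $\NS_\kappa\restriction S$, while $\kappa\setminus S\in U$ because $S\notin U$; this is the desired contradiction.

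Next I would check the ``in particular'' clauses. That $\propb(\PDIne(\NS_\kappa))$ holds, even for the game of length $\kappa$, is because $U$ witnesses a winning strategy for nonempty: normality of $U$ lets nonempty answer each regressive $f_\alpha$ by a value $c_\alpha$ with $f_\alpha^{-1}(c_\alpha)\in U$ and set $S_\alpha=f_\alpha^{-1}(c_\alpha)\cap\bigcap_{\beta<\alpha}S_\beta$, which by ${<}\kappa$-completeness of $U$ remains in $U$ (hence stationary, hence $\NS_\kappa$-positive) at every stage $\alpha<\kappa$, so the run never terminates. That $\propb(\PDze(\NS_\kappa\restriction S))$ fails for every stationary $S$ follows from the main assertion together with the equivalence ``$\propb(\PDze(J))$ iff $\propb(\PDzne(J\restriction S'))$ for all $S'\in J^+$'': split $S$ into $\kappa$ many disjoint stationary pieces and pick one, say $S'$, that is not in $U$ (at most one piece can be, as $U$ is an ultrafilter); then $S'\in(\NS_\kappa\restriction S)^+$, $(\NS_\kappa\restriction S)\restriction S'=\NS_\kappa\restriction S'$, and by the main assertion nonempty does not win $\PDzne(\NS_\kappa\restriction S')$.

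Finally, for nowhere precipitousness fix a stationary $S$. The ideal $\NS_\kappa\restriction S$ is normal, so by Corollary~\ref{cor:gurke89} it is precipitous only if it coincides with the dual of $U$; but it does not, since if $S\notin U$ then $\kappa\setminus S$ lies in $\NS_\kappa\restriction S$ but not in the dual of $U$, whereas if $S\in U$ one obtains (splitting $S$ as above) a stationary set $T\subseteq S$ with $T\notin U$, which lies in the dual of $U$ but, as $T\cap S=T$ is stationary, not in $\NS_\kappa\restriction S$. Hence $\NS_\kappa\restriction S$ is not precipitous, i.e.\ $\propa(\BMze(\NS_\kappa\restriction S))$ fails, for every stationary $S$; that is, $\NS_\kappa$ is nowhere precipitous. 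I expect the only points requiring care to be the small ideal-arithmetic identities such as $(\NS_\kappa\restriction S)\restriction S'=\NS_\kappa\restriction S'$ and the bookkeeping of the two cases $S\in U$ and $S\notin U$; there is no substantial obstacle, as this corollary is purely a matter of assembling the preceding lemmas.
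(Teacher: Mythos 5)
Your proposal is correct and follows exactly the route the paper intends (the corollary is stated as an immediate consequence of Lemma~\ref{lem:bPDgne} and Corollary~\ref{cor:gurke89}, with the remaining clauses supplied by the $U$-strategy for $\propb(\PDIne(\NS_\kappa))$, the density reformulation of $\PDze$, and the uniqueness of the normal precipitous ideal in $L[U]$). The ideal-arithmetic details you flag, such as $(\NS_\kappa\restriction S)\restriction S'=\NS_\kappa\restriction S'$ for stationary $S'\subseteq S$ and the case split on whether $S\in U$, are handled correctly.
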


We can use a Levy Collapse to reflect this situation down to, e.g., $\al2$.
We first list some properties of the Levy collapse.
Assume that $\kappa$ is inaccessible, $\theta<\kappa$ regular, and
let $Q = \textrm{Levy}(\theta, <\kappa)$ be
the Levy collapse of $\kappa$ to $\theta^+$:
A condition $q \in Q$ is a function defined on a subset of $\kappa \times \theta$, such that
$|\dom(q)| < \theta$ and $q(\alpha, \xi) < \alpha$ for $\alpha>1, (\alpha, \xi) \in \dom(q)$
and $q(\alpha, \xi)=0$ for $\alpha\in\{0,1\}$.
Given $\alpha < \kappa$, define
$Q_\alpha = \{q:\, \dom(q) \subseteq \alpha \times \theta\}$ and
$\pi_\alpha: Q \to Q_\alpha$ by
$q \mapsto q \upharpoonright (\alpha \times \theta)$.
The following is well known:
\begin{itemize}
  \item If $q\Vdash p\in G$, then $q\leq p$ (i.e. $\leq^*$ is the same as $\leq$).
  \item $Q$ is $\kappa$-cc and $<\theta$-closed.
  \item In particular, if $p$ forces that $C\subseteq \kappa$ is club, then
    there is a club $C_0\in V$ such that $p$ forces 
    $C_0\subseteq C$. The ideal generated by $\NS^V_\kappa$
    in $V[G]$ is $NS^{V[G]}_\kappa$.
\end{itemize}

We also need the following simple fact (see, e.g.,~\cite[6.2]{MR2371208}
for a proof):
\begin{equation}\label{eq:qwkjrqwr}
  \parbox{0.8\textwidth}{
  Let $I$ be a normal ideal concentrated on $E^\kappa_{{\geq} \theta}$, 
  let $T$ be $I$-positive, $p\in Q$ and 
  $p_\alpha\leq p$ for all $\alpha\in T$. Then
  there is an $I$-positive $T'\subseteq T$ and a $q\leq p$
  such that $\pi_\alpha(p_\alpha)=q$ for all $\alpha\in T'$.
  }
\end{equation}
So in particular, every $q'\leq q$ is compatible with
$p_\alpha$ for all but boundedly many $\alpha\in T'$.

We will also use:
\begin{Lem}\label{lem:ez4636}
  Let $\kappa$ be inaccessible and $T\subset \kappa$ be stationary.
  The Levy collapse preserves
  $\lnot \propb(\PDzne(\NS_\kappa\restriction T))$. The same
  holds for $\PDIne$.
\end{Lem}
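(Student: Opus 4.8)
The plan is the standard one for showing that a sufficiently closed, stationary-set-preserving forcing cannot manufacture a winning strategy for nonempty: pull a putative winning strategy back to $V$. First a reduction. If $\theta=\omega$ there is nothing to prove, since then $\kappa$ is collapsed to $\al1$ in $V[G]$, and $\propb(\PDzne(\NS_{\al1}\restriction T))$ fails by \ref{lem:bPDgtc}, as does $\propb(\PDIne(\NS_{\al1}\restriction T))$ (which implies it). So assume $\theta>\omega$; then $Q$ is $\sigma$-closed, and since $Q$ is $\kappa$-cc it keeps $\kappa$ regular and $T$ stationary, and every club of $V$ remains a club of $V[G]$ — so a subset of $\kappa$ lying in $V$ that is $\NS_\kappa\restriction T$-positive in $V[G]$ is already $\NS_\kappa\restriction T$-positive in $V$, and the game $\PDzne(\NS_\kappa\restriction T)$ is played on the same board in $V$ and in $V[G]$ when the regressive functions come from $V$.

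Now suppose towards a contradiction that some $p\in Q$ forces that $\name{\sigma}$ is a winning strategy for nonempty in $V[G]$. Working in $V$, I define by recursion on length a strategy $\tau$ for nonempty in the $V$-game, together with conditions $q_w\le p$ attached to the $\tau$-valid partial runs $w$ (with $q_{\langle\rangle}=p$), decreasing along plays, such that $q_w$ forces ``$w$ is a valid partial run of $\name{\sigma}$''. Given such a $w=(f_0,\alpha_0,\dots,\alpha_{n-1})$ with its $q_w$, and empty's next (necessarily ground-model, regressive) move $f_n$, I use a fixed wellorder of $Q$ to choose a $q\le q_w$ deciding $\name{\sigma}$ at $(f_0,\alpha_0,\dots,f_n)$, say $q\Vdash\name{\sigma}(f_0,\alpha_0,\dots,f_n)=\check{\alpha}_n$, and set $\tau(w^\frown f_n)=\alpha_n$ and $q_{w^\frown f_n^\frown\alpha_n}=q$. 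This is legitimate: $q\le p$ forces that $(f_0,\alpha_0,\dots,f_n,\alpha_n)$ is a valid partial run of the winning strategy $\name{\sigma}$, so $q$ forces $f_n^{-1}(\alpha_n)\cap S_{n-1}$ to be $\NS_\kappa\restriction T$-positive in $V[G]$; since that set lies in $V$, it is $\NS_\kappa\restriction T$-positive in $V$ too, i.e.\ $\alpha_n$ is a legal response for nonempty in the $V$-game. The whole recursion takes place in $V$, so $\tau\in V$.

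It remains to see that $\tau$ wins. Let $\rho=(f_0,\alpha_0,f_1,\alpha_1,\dots)$ be a run in which nonempty follows $\tau$. The attached conditions $q_{\langle f_0,\alpha_0\rangle}\ge q_{\langle f_0,\alpha_0,f_1,\alpha_1\rangle}\ge\cdots$ form a decreasing $\omega$-sequence below $p$, so by $\sigma$-closure of $Q$ they have a common lower bound $q^*\le p$; pick a $V$-generic $G\ni q^*$. In $V[G]$, $\rho$ is literally a run of $\PDzne(\NS_\kappa\restriction T)$ in which nonempty follows the winning strategy $\name{\sigma}[G]$, so nonempty wins $\rho$ in $V[G]$: $\bigcap_n S_n\ne\emptyset$ (and, for $\PDIne$, $\bigcap_n S_n$ is $\NS_\kappa\restriction T$-positive). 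As $\bigcap_n S_n\in V$ and ``nonempty'' is absolute (and ``positive in $V[G]$'' implies ``positive in $V$''), the same holds in $V$, so $\tau$ wins $\rho$. Hence nonempty wins the $V$-game, contradicting the hypothesis $V\models\lnot\propb(\PDzne(\NS_\kappa\restriction T))$ (resp.\ $\lnot\propb(\PDIne(\NS_\kappa\restriction T))$). The only points requiring care are the legality step in the second paragraph — harmless here precisely because non-stationarity is easier to witness in the inner model — and keeping the $q_w$ coherent along every play so that $\sigma$-closure may be invoked at the end; the $\PDIne$ case is verbatim the same, reading ``positive'' for ``nonempty'' in the winning condition.
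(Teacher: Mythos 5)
Your proof is correct and follows essentially the same route as the paper's: decide the $V[G]$-strategy's responses by a decreasing sequence of conditions, note that stationarity (in $T$) of a ground-model set in $V[G]$ implies its stationarity in $V$ so the responses are legal in the $V$-game, and use $\sigma$-closure of the collapse to find a lower bound witnessing that the resulting run is won. The extra remarks (the $\theta=\omega$ reduction, the explicit tree of conditions $q_w$) are harmless elaborations of the same argument.
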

\begin{proof}
  Assume towards a contradiction that $q$ forces that nonempty does have a
  winning strategy in $V[G]$. We describe a winning strategy
  in $V$: Assume empty plays $f_0$ (in $[V]$).  Let $q_0\leq q$ decide that in
  $V[G]$ nonempty chooses $\alpha_0$ as response to $f_0$ according to the
  winning strategy in $V[G]$.
  So  $q_0$ forces that $f_0^{-1}(\alpha_0)\cap T$ is stationary,
  therefore $f_0^{-1}(\alpha_0)\cap T$ is stationary in $V$.
  Generally, let $q_n\leq q_{n-1}$ decide that nonempty
  plays $\alpha_n$ as response to $f_n$.
  Since $Q$ is $\sigma$-closed, there is a $q_\omega<q_n$
  for all $n$. So $q_\omega$ forces that
  $\bigcap f_n^{-1}(\alpha_n)\cap T$ is stationary.
\end{proof}

Starting with $L[U]$ and using a Levy collapse we get:
\begin{Cor}\label{cor:gurk422}
  Consistently relative to a measurable,
  $\propb(\PDIne(\NS_{\al2}))$ holds (even for length $\al1$) but
  $\propb(\PDze(\NS_{\al2}\restriction S))$
  fails for every stationary $S$, and
  $\NS_{\al2}$ is nowhere precipitous.
\end{Cor}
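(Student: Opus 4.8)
The plan is to start with the model $L[U]$, where $U$ is a normal ultrafilter on a measurable cardinal $\kappa$, and perform the Levy collapse $Q=\Levy(\al1,{<}\kappa)$, which turns $\kappa$ into $\al2$. Everything we need is already packaged in the previous lemmas, so the proof should be a short matter of collecting facts. In $L[U]$ we have, by Corollary~\ref{cor:gurke90}, that $\propb(\PDIne(\NS_\kappa))$ holds (even for the game of length $\kappa$), that $\propb(\PDze(\NS_\kappa\restriction S))$ fails for every stationary $S$, and that $\NS_\kappa$ is nowhere precipitous. The task is to show each of these three properties is preserved (appropriately reinterpreted) by $Q$.

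The preservation of the first property, $\propb(\PDIne(\NS_{\al2}))$, is not literally a quoted lemma but follows from the same kind of argument as Lemma~\ref{lem:ez4636}: since $Q$ is ${<}\al1$-closed (i.e.\ $\sigma$-closed) and $\kappa$-cc, nonempty's winning strategy in $V$ for the game of length $\al1$ lifts to $V[G]$. Indeed, given a $Q$-name for a run in $V[G]$ of length $\le\al1$, one can, using $\sigma$-closure at countable stages and taking lower bounds, build a corresponding run in $V$; more carefully, one uses that $Q$ is $<\!\kappa$-closure-like on the relevant cofinalities — actually the cleanest route is: $\propb(\PDIne(\NS_\kappa))$ in $L[U]$ for length $\kappa$, combined with the fact from the Levy-collapse list that $\NS^{V}_\kappa$ generates $\NS^{V[G]}_\kappa$, plus a strategy-lifting argument as in~\ref{lem:ez4636} but in the positive direction (nonempty plays in $V$, empty's $V[G]$-moves are caught on a ground-model club/positive set). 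Here one invokes~\eqref{eq:qwkjrqwr} and the $\sigma$-closure to run nonempty's ground-model strategy against empty's $V[G]$-regressive functions restricted to stationary sets. For the second property we invoke Lemma~\ref{lem:ez4636} directly: it states precisely that $Q$ preserves $\lnot\propb(\PDzne(\NS_\kappa\restriction T))$ for stationary $T$, and since in $V[G]$ every stationary $S\subseteq\al2$ contains (mod $\NS$) a ground-model stationary set — or rather, a positive set is positive iff it meets a ground-model club — we get $\lnot\propb(\PDze(\NS_{\al2}\restriction S))$ for all stationary $S$. (One should phrase this via: if nonempty won $\PDze(\NS_{\al2}\restriction S)$ for some stationary $S$, then by the club-reflection property of $Q$ there would be a ground-model stationary $T\subseteq S$, and then $Q$ would force $\propb(\PDzne(\NS_\kappa\restriction T))$ to have been true in $L[U]$, contradicting~\ref{cor:gurke90} via~\ref{lem:ez4636}.) For the third, we invoke Lemma~\ref{lem:nowhereprec}: $Q$ is $\kappa$-cc, so it preserves "$\NS_\kappa$ is nowhere precipitous"; since $\NS^{V}_{\kappa}$ generates $\NS^{V[G]}_{\al2}$ this says exactly that $\NS_{\al2}$ is nowhere precipitous in $V[G]$.

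Finally, equiconsistency: a measurable gives $L[U]$ and hence (via the collapse) a model of the conclusion; conversely $\NS_{\al2}$ being nowhere precipitous is compatible with — indeed, the whole package implies by Lemma~\ref{lem:bPDgne} (applied to the length-$\om1$ winning strategy, which in particular gives a length-$\omega$ one) that $\NS_{\al2}$ is normally $\infty$-semi precipitous, which by Lemma~\ref{wurst} forces $\al2$ to be measurable in an inner model, yielding the lower bound. So the statement is equiconsistent with a measurable.

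The main obstacle is the preservation of $\propb(\PDIne(\NS_{\al2}))$ across the collapse, since this is the only ingredient not literally stated as a preservation lemma above. One has to be careful that the length-$\al1$ game is handled: at countable stages $\sigma$-closure of $Q$ suffices to decide nonempty's replies and stay on stationary sets (using $\kappa$-cc to keep stationarity, via the fact that $\NS^V_\kappa$ generates $\NS^{V[G]}_\kappa$), but at the (unique) limit stage of uncountable cofinality below $\al1$ — there is none, since $\al1$ is the length and all proper initial segments are countable — so in fact the game of length $\om1$ decomposes into countably-many-step pieces and $\sigma$-closure is exactly enough. Thus the argument of~\ref{lem:ez4636}, run with the roles of "winning for nonempty" rather than "not winning", goes through. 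I expect this to be routine but it is the one place needing a line of genuine argument rather than a citation; everything else is assembly of~\ref{cor:gurke90},~\ref{lem:ez4636},~\ref{lem:nowhereprec}, and the listed properties of the Levy collapse.
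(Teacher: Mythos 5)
There are two genuine gaps, one in each of the two nontrivial halves of your argument.

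First, the preservation of $\propb(\PDIne(\NS_{\al2}))$. Your plan is to lift nonempty's ground-model strategy through the collapse, ``catching'' empty's $V[G]$-regressive functions on ground-model positive sets. This does not work: a regressive function $f$ added by $Q=\Levy(\al1,{<}\kappa)$ need not agree with any ground-model regressive function on a stationary set, and no single condition decides $f$ on a stationary set ($Q$ is only $\sigma$-closed, while $f$ has domain $\kappa$); the $\kappa$-cc only bounds the set of possible values of $f(\alpha)$ by something of size ${<}\kappa$, which is useless for pressing down. Lemma~\ref{lem:ez4636} transfers a strategy from $V[G]$ down to $V$, i.e.\ it preserves the \emph{non}existence of a winning strategy, and cannot be ``run in the positive direction''. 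The paper avoids this entirely: in $V[G]$ the filter $\cl(U)$ is normal and its positive sets contain a $\sigma$-closed dense subset (the classical fact from~\cite{MR0485391}), so nonempty wins $\BMIe$ on the dual ideal $I$ of $\cl(U)$, hence $\PDIe(I)$, hence $\PDIne(\NS_{\al2})$ by monotonicity --- even for length $\al1$. Your proposal never brings $U$ into this part of the argument, and without it the claim is unproved.

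Second, the failure of $\propb(\PDze(\NS_{\al2}\restriction S))$ for \emph{every} stationary $S$. You reduce to ground-model stationary sets via the claim that every stationary $S\subseteq\al2$ in $V[G]$ contains a ground-model stationary set. That is false: $Q$ adds new stationary subsets of $E^{\al2}_{\al1}$ neither containing nor contained in (mod $\NS$) any ground-model stationary set; what is true is only that every \emph{club} of $V[G]$ contains a ground-model club. This is exactly why the paper's proof is more delicate: for a name $\name S$ it forms $T_0=\{\alpha:\,p\not\forc\alpha\notin\name S\}$, chooses a stationary $T\subseteq T_0$ with $T\notin U$ (a condition you also omit, and which is needed because Corollary~\ref{cor:gurke90} only gives failure of $\propb(\PDzne(\NS_\kappa\restriction T))$ for $T\notin U$), and then uses~\eqref{eq:qwkjrqwr} and the witnesses $p_\alpha$ to produce a condition $q$ and a ground-model stationary $T'$ with $q\forc T'\cap\name S$ stationary; only then do Lemma~\ref{lem:ez4636} and monotonicity apply to the positive subset $T'\cap\name S$ of $\name S$. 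Your treatment of the third property (nowhere precipitous via Lemma~\ref{lem:nowhereprec} and the $\kappa$-cc) is correct and matches the paper.
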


\begin{proof}
  Assume $V=L[U]$ and let
  $Q=\textrm{Levy}(\al1,{<}\kappa)$
  be the Levy collapse of $\kappa$ to $\al2$.

  To see that $\NS_{\al2}$ is forced to be nowhere precipitous, 
  note that ${<}\kappa$-cc implies $\cl^{V[G]}(\NS^V_\kappa)=\NS^{V[G]}_\kappa$ 
  and use \ref{lem:nowhereprec}.

  In $V[G]$, $\cl^{V[G]}(U)$ is a normal filter such that the family
  of positive sets has a $\sigma$-closed
  dense subset \cite{MR0485391}. Let $I$ be the dual ideal. 
  So nonempty wins $\BMIe(I)$, and therefore $\PDIe(I)$ and $\PDIne(\NS_\kappa)$
  (even of length $\al1$).

  It remains to be shown that $\propb(\PDze(\NS_{\al2}\restriction S))$ fails
  in $V[G]$ for all stationary $S$.
  Assume towards a contradiction that some $p$
  forces that $\name S$ is stationary and
  $\propb(\PDzne(\NS_\kappa\restriction S'))$ holds
  for all stationary $S'\subseteq \name S$.
  According to~\ref{cor:gubadi}
  we can assume $\name S\subseteq E^{\al2}_{\al1}$.
  Set
  \[
    T_0=\{\alpha\in\kappa:\, p\not\forc \alpha\notin \name S\}
  \]
  $T_0\subseteq E^\kappa_{{\geq} \al1}$
  is stationary. Fix some stationary $T\subseteq T_0$
  not in $U$; and for $\alpha\in T$ fix some $p_\alpha\leq p$ forcing
  $\alpha\in \n S$.
  Apply~\eqref{eq:qwkjrqwr} to $T$, the nonstationary ideal
  and $(p_\alpha)_{\alpha\in T}$. This results in $q\leq p$
  and $T'\subseteq T$ stationary.
  \begin{equation}\label{eq:gkj3}
     q\forc S':=T'\cap \name S\text{ is stationary}.
  \end{equation}
  Otherwise some $q_1\leq q$ forces that $S'$
  is nonstationary. Then there is in $V$ a club $C$ and a $q_2\leq q_1$
  forcing that $S'\cap C=\emptyset$. Pick $\alpha\in T'\cap C$
  such that $p_\alpha$ and $q_2$ are compatible. Then
  $q_3\leq p_\alpha,q_2$ forces that $\alpha\in T'\cap C\cap \n S$,
  a contradiction. This shows~\eqref{eq:gkj3}.

  By our assumption, $p$ forces that nonempty wins $\PDzne(\NS\restriction S')$.
  But $\propb(\PDzne(\NS_\kappa\restriction T'))$ fails in $V$
  (since $T'\subset T$ and $T\notin U$), 
  therefore $\propb(\PDzne(\NS_{\al2}\restriction T'))$
  fails in $V[G]$ according to~\ref{lem:ez4636}, and by monotonicity
  $\propb(\PDzne(\NS_{\al2}\restriction S'))$ fails as well,
  a contradiction.
\end{proof}

We will now force nonempty not to win PD. For simplicity we will assume 
CH and look at $\kappa=\al2$. It turns out that it is enough to 
add $\al1$ many Cohen reals (actually, many similar forcings also work).
First we need another variant of~\eqref{eq:gu2} or~\eqref{eq:lkwet}:
\begin{Lem}\label{lem:prelim1}
  Assume CH and $\propb(\PDzne(\NS_{\al2}))$.
  For each $v\in P^*$ there are
  $F'(v)\leq v$ and $F''(v)\leq v$ such that $A(F'(v))$ and
  $A(F''(v))$ are disjoint.
\end{Lem}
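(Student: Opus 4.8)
The plan is to imitate the splitting arguments already carried out in Lemmas~\ref{lem:bPDgtc} and~\ref{lem:uzligutzli}, but now phrased directly in the forcing $P^*$ associated to a hypothetical winning strategy of nonempty, and to replace the geometric input (a set without a perfect subset) resp.\ the cofinal-sequence input by a counting argument based on CH. Fix the winning strategy for nonempty in $\PDzne(\NS_{\al2})$ and form $P^*$ as in the previous section. By Corollary~\ref{cor:gubadi} we may assume throughout that nonempty's strategy only ever keeps us inside $E^{\al2}_{\al1}$, i.e.\ $A(w)\subseteq E^{\al2}_{\al1}$ for every $w\in P^*$; this is what will let a ``stationary split'' argument work. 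So fix $v\in P^*$; I want to produce $F'(v),F''(v)\le v$ with $A(F'(v))\cap A(F''(v))=\emptyset$.

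First I would argue by contradiction, assuming that for every two extensions $w_1,w_2\le v$ the sets $A(w_1),A(w_2)$ meet. From this I extract, for each regressive $f:\al2\to\al2$, a single ``forced response'' $\beta_f$: if some $w\le v$ answers the move $f$ (on a board whose current value is a subset of $A(v)$) with value $\alpha$, then $A(w)\subseteq f^{-1}(\alpha)\cap A(v)$ is stationary, and if two such answers $\alpha\ne\alpha'$ occurred we would get two extensions of $v$ with disjoint $A$'s (as $f^{-1}(\alpha)\cap f^{-1}(\alpha')=\emptyset$). Hence the answer to $f$ from $v$ is a well-defined ordinal $\beta_f<\al2$. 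Now the point of CH: there are only $\al2$ many regressive functions on $\al2$... no, there are $2^{\al2}$ of them, so I cannot sup over all of them. Instead I should only use regressive functions chosen from a suitable family of size $\al1$, or better, run the argument inside a small elementary submodel: pick $N\esm H(\chi)$ with $|N|=\al1$, $N\supseteq\al1$, containing the strategy, $v$, and with $N\cap\al2=\delta\in E^{\al2}_{\al1}$ (here CH, or just $\al1^{\al0}=\al1$ via $|N^\omega|=\al1$, makes such $N$ available closed under $\omega$-sequences, which I'll need for the final intersection step). Then the only responses I care about are $\beta_f$ for $f\in N$, all of which lie below $\delta$ since $N\cap\al2=\delta$ and $N\esm H(\chi)$. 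So $\sup\{\beta_f:f\in N\}=\delta$ — that is the crucial replacement for the ``$\sup\beta_i<\delta$'' step in~\ref{lem:uzligutzli}; here the sup is exactly $\delta$, not below it, which forces me to be a little more careful.

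Next, mimicking~\eqref{eq:lkwet}: inside $N$ I build a descending sequence $v=w_{-1}>w_0>w_1>\cdots$ in $N\cap P^*$. At step $n$ I have empty play (inside $N$) a regressive function $f_n$ that is the identity (or some fixed cofinal-type coding map) above some ordinal $\gamma_n<\delta$ with $\gamma_n\in N$ and $\sup_n\gamma_n=\delta$, and is $0$ below $\gamma_n$; since $f_n\in N$, nonempty's strategic reply $\alpha_n$ is in $N$, hence $<\delta$, and I let $w_n\in N$ be the resulting valid sequence, simultaneously thinning so that $A(w_n)\subseteq f_n^{-1}(\alpha_n)$. Because $N$ is closed under $\omega$-sequences, the run $(w_n)_{n\in\omega}$ is itself (coded) in $N$, and nonempty wins it, so there is $\nu\in\bigcap_n A(w_n)$. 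On the one hand $f_n(\nu)=\alpha_n<\delta$ and $f_n$ is the coding map above $\gamma_n$, forcing $\nu\ge\sup_n\gamma_n=\delta$; on the other hand, arranging (exactly as in~\ref{lem:uzligutzli}) that $\nu\le\delta$ via membership in $N$, or arranging that $A(w_0)$ avoids $\delta$ by an initial split, yields $\nu=\delta\notin A(w_0)$ or $\nu\notin A(w_0)$ — a contradiction. Hence the contradiction hypothesis fails and $F'(v),F''(v)$ exist.

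The main obstacle is the bookkeeping at the top ordinal $\delta$: unlike in~\ref{lem:uzligutzli}, where the relevant sup was strictly below $\delta$ and one simply coded ``the first level beating $\delta$'', here the responses $\beta_f$ are cofinal in $\delta$, so the ``beat $\delta$'' trick must be reorganized as a diagonal over a cofinal $\omega$-sequence $\gamma_n\nearrow\delta$, and one must make sure the single functions $f_n$ actually land in $N$ (which needs $\gamma_n\in N$, hence $\cf(\delta)=\al1$ is used, together with the fact that $\delta$ is approached by an $\omega$-sequence living in $N$ because $|N|^{\al0}=\al1$, i.e.\ CH). Everything else — that stationary $A(w)$'s with disjoint preimages give the desired $F',F''$, that $P^*$-extensions can be thinned to lie below a prescribed positive set, that nonempty wins the assembled run — is already in hand from the previous section.
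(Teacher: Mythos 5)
Your opening moves are sound and match the template of \eqref{eq:lkwet}: under the hypothesis that every two extensions of $v$ have intersecting $A$-sets, the strategy's response to a fixed regressive $f$ is the same ordinal $\beta_f$ no matter where below $v$ it is played, since two distinct responses would put the resulting $A$-sets inside disjoint preimages. But the way you try to close the argument does not work, and the failure is not just bookkeeping. You ask for $N\esm H(\chi)$ closed under $\omega$-sequences with $N\cap\al2=\delta\in E^{\al2}_{\al1}$, and then for ordinals $\gamma_n\in N\cap\delta$ with $\sup_n\gamma_n=\delta$: no such countable sequence exists, precisely because $\cf(\delta)=\al1$ (which is forced by the closure of $N$ under $\omega$-sequences that you also need). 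More fundamentally, the ``trap $\nu$ at $\delta$'' mechanism of Lemma~\ref{lem:uzligutzli} depends on the intersection point $\nu$ having \emph{countable} cofinality, so that the $\omega$ many moves of the game suffice to exhibit an entire cofinal sequence of $\nu$ inside $N$ and conclude $\nu\le\delta$. Here $\nu\in E^{\al2}_{\al1}$, and knowing that countably many values $\gamma^\nu(i_n)$ lie below $\delta$ bounds only their supremum, which is $<\nu$; it gives no upper bound on $\nu$ at all. (Also, ``$f_n$ is the identity above $\gamma_n$'' is not a regressive function, and the substitute ``coding map'' is exactly the part you never specify.) So the proposal has a genuine gap: the single idea that makes the uncountable-cofinality case go through is missing.

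That missing idea, in the paper's proof, is where CH actually enters: fix an injection $\phi:[\al2]^{\al0}\to\al2$ and let $g_i(\alpha)=\phi(\{\gamma^\alpha(j):j\le i\})$, so that a \emph{single} response $x(i)$ of the strategy to $v^\frown g_i$ determines the entire initial segment $(\gamma_{i,k})_{k\le i}$ of the cofinal sequence of every point of the homogeneous set. The proof is then direct, not by contradiction: either two of these initial segments already disagree at some coordinate $k$ (Case A; the two homogeneous sets are then disjoint and serve as $F'(v),F''(v)$), or they all cohere into one sequence with limit $\tilde\alpha$, and the regressive function $k(\alpha)=$ ``first coordinate where $\gamma^\alpha$ disagrees with the limit sequence'' together with $g_l$ (for $l$ the response to $k$) gives the split (Case B). If you prefer to stay with your contradiction framing, the repair closest to your plan is to apply the well-definedness to the $\om1$ many coordinate functions $h_i(\alpha)=\gamma^\alpha(i)$, take $\delta=\sup_{i\in\om1}\beta_{h_i}<\al2$, and then play $f(\alpha)=\min\{i:\gamma^\alpha(i)>\delta\}$ followed by $h_m$ for nonempty's response $m$; the forced response $\beta_{h_m}\le\delta$ lands in a set disjoint (mod a bounded set) from $f^{-1}(m)$. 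Either way, some device is needed to defeat the uncountable cofinality; the elementary submodel alone does not provide it.
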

(We can choose $F'(v)$ and $F''(v)$ to be immediate successors
of $v$, i.e. we just have to choose two regressive functions
$f'$ and $f''$ as empty's moves.)

\begin{proof}
  We fix an injection $\phi:[\al2]^{\al0}\to \al2$.
  Let $S=C\cap  E^{\al2}_{\om1}$ (for some clubset $C$) consist of
  ordinals $\alpha$ such that $\phi''[\alpha]^{\al0}\subseteq \alpha$.
  For each $\alpha\in S$, pick a normal cofinal
  sequence $\gamma^\alpha:\om1\to\alpha$.
  For $i\in\omega_1$ set $g_i(\alpha)=\phi(\{\gamma^\alpha(j):\, j\leq i\})$
  for $\alpha\in S$;
  and  set $g_i(\alpha)=0$ for $\alpha\notin S$.
  So for all $i\in\om1$, $g_i$ is a regressive function.
  If $\alpha\neq \beta$ then $g_i(\alpha)\neq g_i(\beta)$
  for some $i$; and
  $g_i(\alpha)\neq g_i(\beta)$ implies
  $g_j(\alpha)\neq g_j(\beta)$  for all $j>i$.

  Let $x(i)$ be the strategy's response to $v^\frown g_i$,
  We can identify $x(i)$ with the sequence
  $\phi^{-1}x(i)=(\gamma_{i,k})_{k\leq i}$. 
  So for all $\alpha$  with $g_i(\alpha)=x(i)$ we get
  $\gamma^\alpha(k)=\gamma_{i,k}$ for $k\leq i$.

  Case A: There are $k<i<j<\omega_1$
  such that $\gamma_{i,k}\neq \gamma_{j,k}$.
  Then set $F'=g_i$
  and $F''=g_j$. If $\alpha\in g_i^{-1}(x(i))$ and
  $\beta\in g_j^{-1}(x(j))$, then
  $\gamma^\alpha(k)\neq \gamma^\beta(k)$, so in particular
  $\alpha\neq \beta$.
  
  Case B: Otherwise, all the sequences
  $(\gamma_{i,k})_{i\leq k}$ cohere for all $i\in\omega_1$,
  so let $(\tilde \gamma_k)_{k\in\omega_1}$ be the union of these
  sequences, with supremum $\tilde \alpha<\omega_2$.
  So for all $\alpha\neq \tilde \alpha$ in $S$ there is some
  $k(\alpha)\in\omega_1$ such that $\gamma^\alpha(k(\alpha))\neq \tilde \gamma_{k(\alpha))}$. Set $k(\alpha)=0$ for $\alpha\in \omega_1\cup\{\tilde \alpha\}
  \cup \omega_2\setminus S$. So $k$ is a regressive function. Let
  $l$ be the strategy's response to $v^\frown k$.
  Set $F'=k$ and $F''=g_{l}$. If $\alpha\in k^{-1}(l)$ and
  $\beta\in g_l^{-1}x(l)$, then $\gamma^\beta(l)=\tilde \gamma_l$
  which is different to $\gamma^\alpha(l)$.
\end{proof}

\begin{Lem}\label{lem:fueel}
  Assume CH. Let $P_{\om1}$ be the forcing notion adding
  $\al1$ many Cohen reals.
  Then $P_{\om1}$ forces $\lnot\propb(\PDzne(\NS_{\al2}))$.
\end{Lem}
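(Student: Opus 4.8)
The plan is to argue by contradiction: suppose that in the Cohen extension $V[G]$ (where $G$ is $P_{\om1}$-generic) player nonempty has a winning strategy $\sigma$ for $\PDzne(\NS_{\al2})$. Since $P_{\om1}$ has size $\al1$ and CH holds in $V$, the extension satisfies $2^{\al0}=\al1<\al2$, so by Lemma~\ref{lem:bPDgtc} no such strategy can exist — wait, that gives $\propb(\PDzne)\Rightarrow\kappa>2^{\al0}$, which holds here, so that is not the obstruction. The real point must be a reflection/coding argument using that the $\al1$ Cohen reals "diagonalize" against $\sigma$. So instead: work in $V$, where CH plus (the relevant instance of) $\propb(\PDzne(\NS_{\al2}))$ need \emph{not} hold; rather, one shows that \emph{no} strategy in any $P_{\om1}$-extension can win. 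The cleanest route: assume $p\in P_{\om1}$ forces that $\name\sigma$ is a winning strategy for nonempty. Using a name for $\name\sigma$ and the finite-support product structure, build in $V$ an auxiliary object — essentially the tree $P^*$ of valid partial runs relative to $\name\sigma$ — and then use Lemma~\ref{lem:prelim1} applied \emph{inside $V[G]$} (noting CH-type hypotheses: after adding $\al1$ Cohens, $2^{\al0}=\al1$ still, and $\al2$ is preserved, so the combinatorial setup of \ref{lem:prelim1} is available in $V[G]$) to get, for each node $v$ of $P^*$, two incompatible successors $F'(v),F''(v)$ with $A(F'(v))\cap A(F''(v))=\emptyset$.

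The key steps, in order: (1) Observe $P_{\om1}$ preserves cardinals, preserves CH in the weak sense that $2^{\al0}=\al1$, and preserves stationarity of subsets of $\al2$ (it is $\al2$-cc, even ccc), so $\NS_{\al2}^{V[G]}$ is generated by $\NS_{\al2}^V$; in particular the hypotheses of Lemma~\ref{lem:prelim1} are met in $V[G]$. (2) Suppose toward a contradiction $\propb(\PDzne(\NS_{\al2}))$ holds in $V[G]$, fix the strategy and form $P^*$ as in the ``Nonempty winning'' section. (3) Apply Lemma~\ref{lem:prelim1} to get a splitting map $v\mapsto(F'(v),F''(v))$ with disjoint $A$-values; this gives an order-preserving embedding $\psi:2^{<\omega}\to P^*$ with $A(\psi(s{}^\frown0))\cap A(\psi(s{}^\frown1))=\emptyset$. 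Actually, since we only have \emph{one} level of splitting per step (not interval-shrinking), we get that distinct branches $\eta\in 2^\omega$ yield runs whose $A$-intersections are pairwise disjoint nonempty sets, i.e.\ $2^\omega$-many pairwise distinct points $r_\eta\in\al2$, which is harmless on its own. (4) The contradiction must instead come from a \emph{genericity/Cohen-coding} argument: the $\al1$ Cohen reals let empty, in $V[G]$, ``guess'' enough of $\name\sigma$ to defeat it. Concretely, one picks a Cohen real coding a branch through $2^{<\omega}$ chosen generically over a model containing $\psi$ and $\sigma$, forcing the resulting run's intersection to be empty — but nonempty's strategy guarantees it is nonempty, contradiction. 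More precisely, I would set up a countable $N\esm H(\chi)$ in $V$ containing $p$ and $\name\sigma$, use one Cohen real to be $N$-generic, and run an argument parallel to the proof of Lemma~\ref{lem:propaBM} (or Lemma~\ref{lem:uzligutzli}), exploiting that the generic real escapes every $V$-coded set of the relevant form while nonempty's responses are pinned down inside $N$.

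**The main obstacle** I expect is Step (4): making precise how adding Cohen reals defeats the strategy. Lemma~\ref{lem:prelim1} only supplies \emph{binary} splitting (disjointness of two successors), not the length-shrinking intervals of \eqref{eq:gu2}, so one cannot directly re-run the perfect-set argument of Lemma~\ref{lem:bPDgtc} to derive a contradiction in $V[G]$ alone — indeed $\propb(\PDzne)$ with $\kappa=\al2>2^{\al0}$ is consistent. The Cohen reals must be used essentially: the heart of the matter is that a Cohen-generic branch through the splitting tree $\psi:2^{<\omega}\to P^*$, together with the fact that empty (in $V[G]$, playing against the fixed $\name\sigma$) can incorporate this branch into an actual run, produces a run whose intersection meets no $V$-set $A(\psi(s))$ below the relevant node except along the generic branch — and by a suitable ``no single point is in all $A(\psi(\eta\restriction n))$ for $\eta$ Cohen-generic'' argument (the point $r_\eta$ would have to be a Cohen real over a submodel, but it lies in $V$-many steps... ), forcing the intersection empty. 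Formalizing the bookkeeping — which Cohen real controls which level, and ensuring empty's moves remain legal regressive functions on $\al2$ — is the delicate part; the corollary \ref{cor:funfz} referenced right after this lemma presumably packages the conclusion as $\lnot\propb(\PDzne(J))$ for all normal $J$ on $\al2$ via monotonicity of $\PDzne$.
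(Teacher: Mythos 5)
Your outline assembles the right ingredients --- Lemma~\ref{lem:prelim1}, a countable $N\esm H(\chi)$ containing the relevant names, and a Cohen real selecting a branch through the binary splitting tree --- and this is indeed the paper's route. But the step you flag as ``the main obstacle'' is exactly the step that does all the work, and your sketch of it is not right as stated. The contradiction is not obtained by showing that a generic branch ``forces the resulting run's intersection to be empty'' via some genericity property of a point $r_\eta$: a point of the intersection is an ordinal $\delta<\al2$, hence lies in every intermediate model, so no genericity argument applies to it. The actual argument is a \emph{decoding} argument. Let $\epsilon=N\cap\om1$, so that $G_\epsilon=G\cap P_\epsilon$ is $P_{\om1}$-generic over $N$; then in $N[G_\epsilon]$ (hence in $V[G_\epsilon]$) the values of $\n\tau$, $\n F'$, $\n F''$ at every valid sequence lying in $N[G_\epsilon]$ are computed \emph{correctly}, i.e.\ they agree with their values in $V[G]$. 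Now pick a real $r\in V[G]\setminus V[G_\epsilon]$ and build the run $b$ by extending $u$ with $F'(u)$ or $F''(u)$ according to $r(n)$; every initial segment stays in $N[G_\epsilon]$. If the strategy were winning, there would be some $\delta\in\bigcap_{n\in\omega} A(b\restriction n)$; but since $A(F'(u))$ and $A(F''(u))$ are disjoint and both available in $V[G_\epsilon]$, the ordinal $\delta$ determines $r(n)$ and $b\restriction(n+1)$ by induction, so $r$ would be computable in $V[G_\epsilon]$ --- contradicting $r\notin V[G_\epsilon]$. Hence the run built from $r$ has empty intersection and nonempty loses it.

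Once this is in place your remaining worries dissolve: there is no level-by-level bookkeeping of Cohen reals (the first $\epsilon$ coordinates serve only to make $G_\epsilon$ generic over $N$, and a single further real $r$ selects the branch), and legality of empty's moves is automatic since $F'(u),F''(u)$ are, by Lemma~\ref{lem:prelim1}, immediate successors of $u$ in $P^*$, i.e.\ given by regressive functions. Note also that Lemma~\ref{lem:prelim1} must be invoked as a statement forced by $P_{\om1}$, with the names $\n F',\n F''$ placed into $N$; applying it only ``inside $V[G]$'' without fixing such names in $N$ would not give the agreement between $N[G_\epsilon]$ and $V[G]$ that the decoding step requires.
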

(The same holds for any other CH preserving $\om1$-iteration of absolute ccc
forcing notions.) Note that since $\PDzne$ is monotone, $\propb(\PDzne(I))$
fails for all ideals $I$ on $\al2$.

\begin{proof}
  Assume that $p\in P_{\om1}$ forces that $\n\tau$ is a winning strategy
  for nonempty for the game $\PDzne(I)$.

  Let $P_\alpha$ be the 
  complete subforcing of the first $\alpha$ Cohen reals.
  $P_{\om1}$ forces that
  Lemma~\ref{lem:prelim1} holds. We fix the according
  $P_{\om1}$-names $\n F'$ and $\n F''$.
  Let $N\esm H(\chi)$ be countable and contain $p$, $\n\tau$, $\n F'$ and $\n F''$.
  Set $\epsilon=N\cap \omega_1$.
  If $G_{\om1}$ is $P_{\om1}$-generic over $V$,
  then $G_{\epsilon}=G_{\om1}\cap P_{\epsilon}$
  is $P_{\omega_1}$-generic over $N$ (and
  $P_\epsilon$-generic over $V$).

  So in $N_\epsilon=N[G_{\epsilon}]=N[G_{\om1}]$, we can
  evaluate the correct values of $\n\tau$, $\n F'$ and
  $\n F''$ for all valid sequences $v$ in $N_\epsilon$
  (i.e., the resulting values are the same as the ones
  calculated in $V_{\om1}=V[G_{\om1}]$).

  In $V_{\om1}$, pick any real $r\notin V_\epsilon$.
  Using $r$, we now define by induction a run $b$ of the game
  such that each initial segment is in $N_\epsilon$:
  Assume we already have the valid sequence $u\in N_\epsilon$.
  Extend $u$ with $\n F'(u)$ if $r(n)=0$, and to
  $F''(u)$ otherwise.

  So $b\in V_{\om1}$ is a run of the game according to $\tau$;
  nonempty wins the run; so there is some
  $\delta\in \bigcap_{n\in\omega}A(b\restriction n)$.  
  But we can in $V_\epsilon$ use
  this $\delta$ to reconstruct (by induction)
  the run $b$ and therefore the real $r$:
  Assume we already know $r\restriction n$
  and the corresponding valid sequence $u=b\restriction n$.
  Then $\delta$ is element of exactly
  one of $A(F'(u))$ or $A(F''(u))$, which
  determines $r(n)$ as well as the sequence corresponding
  to $b\restriction (n+1)$.
\end{proof}

On the other hand, adding Cohens, as any $\kappa$-cc forcing, preserves
precipitousness (and non-precipitousness) of an ideal,
cf.~\ref{lem:nowhereprec}.  So we get:
\begin{Cor}\label{cor:funfz}
  $\propa(\BMze(I))$ does not imply $\propb(\PDzne(\NS))$.
\end{Cor}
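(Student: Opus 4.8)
The plan is to combine Lemma~\ref{lem:fueel} with Lemma~\ref{lem:nowhereprec}: start from a model that satisfies CH and carries a precipitous normal ideal on $\al2$, then add $\al1$ Cohen reals. The Cohen forcing destroys $\propb(\PDzne(\NS_{\al2}))$ (hence, by monotonicity of $\PDzne$, $\propb(\PDzne(J))$ for every normal ideal $J$ on $\al2$), while being ccc it preserves precipitousness, so the precipitous ideal survives.

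Concretely, I would start with a normal ultrafilter $U$ on a measurable cardinal $\kappa$ (this is the only place a large cardinal enters, and Lemma~\ref{wurst} shows it is necessary), and force with $Q=\Levy(\al1,{<}\kappa)$. Since $Q$ is $\sigma$-closed it adds no reals, so CH holds in $V^Q$, and $\kappa=\al2$ there. As recalled in the proof of Corollary~\ref{cor:gurk422}, in $V^Q$ the closure $\cl(U)$ is a normal filter whose positive sets contain a $\sigma$-closed dense subset; writing $I$ for its dual ideal, nonempty wins $\BMIe(I)$, so $\propa(\BMIe(I))$ and therefore $\propa(\BMze(I))$ hold, i.e.\ $I$ is precipitous.

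Next I would pass to $V^{Q*P_{\om1}}$, where $P_{\om1}$ adds $\al1$ Cohen reals over $V^Q$. Because CH holds in $V^Q$, Lemma~\ref{lem:fueel} gives $\lnot\propb(\PDzne(\NS_{\al2}))$ in $V^{Q*P_{\om1}}$; since $\PDzne$ is monotone and $\NS_{\al2}$ is contained in every normal ideal on $\al2$, also $\propb(\PDzne(J))$ fails there for every normal $J$ on $\al2$. On the other hand $P_{\om1}$ is ccc, hence $\al2$-cc, so Lemma~\ref{lem:nowhereprec} (applied with $\kappa=\al2$) shows that the closure $\cl(I)$ of $I$ in $V^{Q*P_{\om1}}$ is again precipitous, i.e.\ $\propa(\BMze(\cl(I)))$ holds. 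Thus $V^{Q*P_{\om1}}$ is a model with an ideal witnessing $\propa(\BMze)$ in which $\propb(\PDzne(\NS_{\al2}))$ fails, which is exactly the stated non-implication.

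There is no serious obstacle; the argument is essentially bookkeeping around two already-proved lemmas. The only points to watch are that CH must genuinely hold in the intermediate model $V^Q$ (secured by the $\sigma$-closedness of the Levy collapse) and that $\al2$, and hence $\NS_{\al2}$, is the same object in $V^Q$ and in $V^{Q*P_{\om1}}$ (because $P_{\om1}$ preserves cardinals). Everything else is supplied by Lemmas~\ref{lem:fueel} and~\ref{lem:nowhereprec} together with the properties of the Levy collapse recalled before Corollary~\ref{cor:gurk422}.
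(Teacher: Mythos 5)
Your proposal is correct and is essentially the paper's own argument: the paper likewise obtains the precipitous ideal on $\al2$ by Levy-collapsing a measurable, then adds $\al1$ Cohen reals, invoking Lemma~\ref{lem:fueel} to kill $\propb(\PDzne(\NS_{\al2}))$ (hence, by monotonicity, for all normal ideals) and Lemma~\ref{lem:nowhereprec} to preserve precipitousness under the $\al2$-cc Cohen forcing. The extra care you take about CH holding in the intermediate model and $\al2$ being preserved matches the paper's implicit bookkeeping.
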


If we assume CH and an $\al3$-saturated normal ideal on $\al2$
saturated on $E^{\al2}_{\al1}$, we get the following:
\begin{Cor}\label{cor:funfe} (Saturated ideal.)
  $\propa(\BMIe(I))$ does not imply $\propb(\PDzne(\NS))$.
\end{Cor}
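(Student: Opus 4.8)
The plan is to prove Lemma~\ref{lem:burp4}, of which the corollary is an immediate consequence. We start from the hypothesis of the corollary: let $V$ be a model of CH containing an $\al3$-saturated normal ideal $I$ on $\kappa=\al2$ that is concentrated on $E^{\al2}_{\al1}$. Let $P=P_{\om1}$ be the forcing adding $\al1$ many Cohen reals, let $G$ be $P$-generic over $V$, and let $J=\cl(I)$ be the ideal generated by $I$ in $V[G]$. The claim is that $V[G]$ is the desired model.

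First I would record the effect of $P$ on $V$: under CH the poset $P$ has size $\al1$, and it is ccc, so it preserves all cardinals and cofinalities and forces CH; in particular $E^{\al2}_{\al0}$ and $E^{\al2}_{\al1}$ denote the same sets in $V$ and in $V[G]$. Since $P$ is $\kappa$-cc and $|P|=\al1<\al3$, standard preservation results give that $J=\cl(I)$ is again a normal, $\al3$-saturated ideal on $\al2$ in $V[G]$ (the $\kappa$-cc part is in the same spirit as the preservation in~\ref{lem:nowhereprec}); and as $E^{\al2}_{\al0}\in I\subseteq J$, the ideal $J$ is still concentrated on $E^{\al2}_{\al1}=E^\kappa_{{>}\al0}$.

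Now I would apply the two lemmas already proved, working in $V[G]$. On the one hand, $J$ is normal, $\al3=\kappa^+$-saturated and concentrated on $E^\kappa_{{>}\al0}$, and CH gives $\lambda^{\al0}<\kappa$ for all $\lambda<\kappa$; so Corollary~\ref{cor:wrqw} yields $\propa(\BMIe(J))$. On the other hand, by Lemma~\ref{lem:fueel} the forcing $P$ forces $\lnot\propb(\PDzne(\NS_{\al2}))$, and since $\PDzne$ is monotone in the ideal and every normal ideal contains $\NS_{\al2}$, in $V[G]$ the property $\propb(\PDzne(K))$ fails for \emph{every} normal ideal $K$ on $\al2$. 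Hence $V[G]$ satisfies CH, the existence of the $\al3$-saturated ideal $J$, $\propa(\BMIe(J))$, and the failure of $\propb(\PDzne(K))$ for all normal $K$; this proves Lemma~\ref{lem:burp4}, and (taking $I:=J$ there) exhibits a model in which $\propa(\BMIe(I))$ holds while $\propb(\PDzne(\NS_{\al2}))$ fails, which is exactly the corollary.

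The step I expect to be the main obstacle is the preservation of $\al3$-saturation (and normality) of $I$ when we pass to $\cl(I)$ in $V[G]$. The key point is that $|P|=\al1$ is well below $\al3$: given a $P$-name for an antichain of length $\al3$ in $\pow(\kappa)/\cl(I)$, one uses the $\kappa$-cc of $P$ to cover each representative by a ground-model set --- necessarily $I$-positive in $V$, since $\cl(I)$ consists exactly of the sets covered by a $V$-set in $I$ --- and, running through the ${\le}\al1$ conditions of $P$, one extracts in $V$ an antichain of length $\al3$ modulo $I$, contradicting the saturation of $I$; normality of $\cl(I)$ follows from $\kappa$-cc in the usual way. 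This is the familiar ``small forcing preserves saturated ideals'' argument and it needs some care, but no new idea; everything else is a direct appeal to Corollary~\ref{cor:wrqw} and Lemma~\ref{lem:fueel}.
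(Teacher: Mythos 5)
Your proposal is correct and follows essentially the same route as the paper: force with $\al1$ Cohen reals over a model of CH with an $\al3$-saturated normal ideal concentrated on $E^{\al2}_{\al1}$, note that the small forcing preserves saturation so that Corollary~\ref{cor:wrqw} gives $\propa(\BMIe(\cl(I)))$, and invoke Lemma~\ref{lem:fueel} (plus monotonicity of $\PDzne$) for the failure of $\propb(\PDzne)$ for all normal ideals. The paper's own proof is just a two-line version of this, citing the smallness of $P_{\om1}$ for preservation of saturation and then \ref{cor:wrqw}; your extra care about preservation of CH, cofinalities, and concentration on $E^{\al2}_{\al1}$ is all sound.
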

\begin{proof}
  Since $P_{\om1}$ has size $\al1<\al2$, $\cl(I)$ remains $\al3$-saturated.
  So in $V[G]$, we can use \ref{cor:wrqw} to see that
  $\propa(\BMIe(\cl(I)))$ holds.
\end{proof}

\section{$\mathfrak b(\textrm{\textup{PD}}_{\textrm{\textup{e}}})$ for a nonprecipitous $I$.}

We have seen that $\propb(\PDIne(I))$ can hold for a nowhere precipitous
ideal $I$.
It is a bit harder to show that there can be a nowhere precipitous
ideal $I$ that even satisfies $\propb(\PDIe(I))$.

\begin{Fact}\label{thm:oiuqweh}
  The following is consistent relative to $\kappa$ measurable:
  $I_0$ is nowhere precipitous, 
  and for every $I_0$-positive set $S$ the dual to
  $I_0\restriction S$ can be extended to a normal ultrafilter.
\end{Fact}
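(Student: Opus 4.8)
We start with $\kappa$ measurable in $V$, witnessed by a normal ultrafilter $U$; by first forcing with a preparation (e.g.\ Laver-style, or simply going to $L[U]$ and then applying a Levy collapse as in Corollary~\ref{cor:gurk422}) we may assume $2^\kappa=\kappa^+$ and that $U$ is the unique normal ultrafilter on $\kappa$. The target ideal $I_0$ will be built so that its dual sits below $U$ but is ``spread out'': for every $I_0$-positive $S$ we want to manufacture, in some further generic extension, a normal $V$-ultrafilter on $\kappa$ containing $I_0\restriction S$ and $U$ itself. The natural mechanism is a master-condition / iteration argument: enumerate all potential positive sets $S$ in an $\kappa^+$-length bookkeeping, and at stage $\xi$ use a Levy-type collapse or a Prikry-type/ultrapower-generic forcing $Q_\xi$ tailored to $S_\xi$ to shoot a normal ultrafilter through the dual of $I_0\restriction S_\xi$. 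One takes an iteration (with $\leq\kappa$-support, using that each $Q_\xi$ is sufficiently closed/$\kappa$-cc so that the whole iteration is $\kappa^+$-cc and $\kappa$-distributive) and lets $I_0$ be the ideal generated by $U$-null sets together with whatever small amount is needed to keep the iteration honest.

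**Key steps, in order.** First I would make precise what ``nowhere precipitous'' requires: by Lemma~\ref{lem:nowhereprec} and the remark after it, nowhere precipitousness is equivalent to the existence of a decreasing sequence of functionals starting at $\kappa$, and this is preserved by $\kappa$-cc forcing; so it suffices to arrange $I_0$ nowhere precipitous in $V$ (or in an intermediate model) and then keep all further forcing $\kappa$-cc. Since the dual of $U$ is precipitous (indeed $\kappa$ measurable), $I_0$ cannot simply be that dual — $I_0$ must be a \emph{proper} subideal of the dual of $U$, normal, nowhere precipitous, yet with every $I_0$-positive restriction extendible (externally) to a normal ultrafilter. The standard device for the latter is to use a $\kappa$-cc forcing that adds a generic elementary embedding; by Corollary~\ref{cor:gurke89} any normal wellfounded $V$-ultrafilter we produce must equal $U$ (if we work over $L[U]$), so what we are really doing is arranging that, over each positive $S$, the $U$-ultrapower embedding can be ``re-derived'' generically from $I_0\restriction S$ — i.e.\ that $I_0\restriction S$ is $\infty$-semi precipitous with witness $U$. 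Concretely: build $I_0$ as the intersection over all positive $S$ of the duals of the ultrafilters $U$ restricted appropriately, or dually, let $I_0 = \{X : X\notin U \text{ or } X\in J\}$ for a carefully chosen normal $J\subseteq U^*$ made nowhere precipitous by diagonalizing against all functional sequences using $2^\kappa=\kappa^+$ bookkeeping. Then for each positive $S$, the forcing that generically adds the $U$-embedding restricted to $S$ does the job, and $\kappa$-cc-ness preserves nowhere precipitousness of $I_0$ throughout.

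**The main obstacle.** The delicate point is the tension between ``nowhere precipitous'' and ``every positive restriction extends to a normal ultrafilter'': an ideal all of whose restrictions are $\infty$-semi precipitous with the \emph{same} witness $U$ is very close to being precipitous, and one must check that no decreasing sequence of functionals starting at $\kappa$ is accidentally destroyed — equivalently, that the diagonalization building $I_0$ genuinely reaches below every positive set before the bookkeeping closes off. Managing this requires a careful fusion/reflection argument at stages of cofinality $\kappa^+$, ensuring the witnessing functional sequence added at the start survives all $\kappa^+$ many extension steps; here $\kappa$-cc of each step and a suitable chain-condition for the iteration are essential, and one must verify that the functional sequence is not ``absorbed'' by the generically added ultrafilters (it is not, since those ultrafilters all equal $U$ and $U$'s dual, while a proper subideal, still admits the ultrapower-wellfoundedness only externally, not internally). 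I would organize the verification around Lemma~\ref{lem:nowhereprec}: exhibit one fixed decreasing functional sequence on $\kappa$ in the ground model witnessing nowhere precipitousness of $I_0$, and show by induction on the iteration — using $\kappa$-cc at each step — that it remains a witness, while the generic embeddings give the extendibility of each $I_0\restriction S$. The construction of $I_0$ itself, balancing these two demands, is the heart of the argument; everything else (closure, chain condition, preservation) is routine given the facts already assembled in the paper.
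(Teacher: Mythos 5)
There is a genuine gap, and in fact your opening move is self-defeating. You propose to pass to $L[U]$ (or otherwise arrange that $U$ is the unique normal ultrafilter on $\kappa$) and then to arrange that every $I_0$-positive restriction ``extends to'' $U$. But the statement asks for an actual normal ultrafilter in the final model extending the dual of $I_0\restriction S$ (this is what makes the remark after the Fact work: nonempty wins $\PDIe(I_0)$ by playing inside that internal ultrafilter, using its normality). If there is only one normal ultrafilter $U$, then every $I_0$-positive set must lie in $U$, hence every $U$-null set is in $I_0$, hence $I_0$ is exactly the dual ideal of $U$ --- which is precipitous, contradicting ``nowhere precipitous''. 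So the construction \emph{requires} $\kappa$ to carry many normal measures in the final model, and $I_0$ must sit inside the intersection of many of their duals; this is exactly why the paper works from an embedding $j\colon V\to M$ with $M$ closed under $\kappa^{++}$-sequences (and why Gitik's measurable-only version builds $U^*$ as an intersection of normal ultrafilters). Relatedly, you conflate ``extends to a normal ultrafilter'' with ``generically adds a normal $V$-ultrafilter'' ($\infty$-semi precipitous); the latter is strictly weaker and does not yield the Fact.

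The second gap is that the heart of the argument is precisely what you defer as ``the construction of $I_0$ itself''. The paper's mechanism is a tree-indexed iteration: the index set is $T=(\kappa^+)^{<\omega}$, each node $a$ adds a generic set $A_a$ and a function $f_a$ with $f_b<f_a$ pointwise on $A_b$ whenever $b$ is a $T$-successor of $a$, so that along every branch of $T$ one gets a decreasing sequence of functionals --- this single device is what makes $I_0$ (the ideal generated, mod clubs, by diagonal unions of $\kappa$ many ``null-index'' sets $B_i$) nowhere precipitous, via an explicit winning strategy for empty in $\BMzne(I_0)$ that walks down a branch. The extendibility of each positive restriction is then obtained by a master-condition argument selecting, for each positive $Y$, a \emph{different} normal ultrafilter concentrating on a suitable $B_b\subseteq Y$ while avoiding all null-index sets. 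None of this is ``routine given the facts already assembled''; without some such combinatorial scaffolding your sketch does not produce a candidate $I_0$, and with your uniqueness-of-$U$ assumption no candidate can exist.
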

Note that this implies $\propb(\PDIe(I_0))$, even for the game of length
$\kappa$.

And as usual, we can use a Levy collapse to reflect these properties to $\al2$:
\begin{Lem} Start with a universe $V$ as in Fact~\ref{thm:oiuqweh}.
  After collapsing $\kappa$ to $\al2$, we get:
  $\cl(I)$ is nowhere precipitous and satisfies 
  $\propb(\PDIe(\cl(I)))$ (even for the game of length $\al1$).
\end{Lem}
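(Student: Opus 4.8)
The plan is to run the Levy collapse exactly as in the proof of~\ref{cor:gurk422}, but keeping track of \emph{all} the normal ultrafilters supplied by Fact~\ref{thm:oiuqweh}, not just a single one. Work in the universe $V$ of Fact~\ref{thm:oiuqweh} and write $I_0$ for the ideal given there (this is the $I$ of the present statement, so $\cl(I)=\cl(I_0)$). For each $I_0$-positive set $S$ in $V$ fix a normal ultrafilter $U_S$ on $\kappa$ extending the dual filter of $I_0\restriction S$, and let $J_S=\{A\subseteq\kappa:\kappa\setminus A\in U_S\}$ be the ideal dual to $U_S$. Since each $U_S$ witnesses measurability, $\kappa$ is measurable — in particular inaccessible — in $V$; let $Q=\textrm{Levy}(\al1,{<}\kappa)$, so that after forcing with $Q$ we have $\kappa=\al2$, and $Q$ is $\kappa$-cc and ${<}\al1$-closed.

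First, nowhere precipitousness of $\cl(I_0)$ in $V[G]$ is immediate: $Q$ is $\kappa$-cc, so by Lemma~\ref{lem:nowhereprec} it preserves ``$I_0$ is nowhere precipitous''. Second — and this is the heart of the matter — I claim that in $V[G]$ every $\cl(I_0)$-positive set is $\cl(J_S)$-positive for \emph{some} $S\in I_0^+\cap V$. The underlying ground-model fact is $I_0=\bigcap_{S\in I_0^+}J_S$: each $J_S\supseteq I_0$ since $U_S$ contains the dual filter of $I_0$ (indeed of $I_0\restriction S$); and if $A\notin I_0$ then, taking $S=A$, we get $A\in U_A$, i.e.\ $A\notin J_A$. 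To transfer this to $V[G]$ one uses the standard consequence of $\kappa$-completeness and $\kappa$-cc, that for a $\kappa$-complete ideal $L\in V$ the closure $\cl(L)$ consists exactly of the subsets of $\kappa$ in $V[G]$ that are covered by a ground-model element of $L$ (the ${<}\kappa$-unions defining $\cl(L)$ reduce, by $\kappa$-cc, to ground-model unions). Given $X\notin\cl(I_0)$ in $V[G]$, suppose toward a contradiction that $X\in\cl(J_S)$ for every $S\in I_0^+\cap V$; then for each such $S$ there is a ground-model $Y_S\in J_S$ with $X\subseteq Y_S$, and (arguing over a maximal antichain of conditions, using $\kappa$-completeness of $J_S$ in $V$) the assignment $S\mapsto Y_S$ can be taken in $V$. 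But then $Y:=\bigcap_{S\in I_0^+\cap V}Y_S\in V$ lies in $\bigcap_S J_S=I_0$ and covers $X$, contradicting $X\notin\cl(I_0)$.

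With this reduction the rest is a chain of citations. By the characterization of $\propb(\PDIe)$ in terms of $\propb(\PDIne)$ in the ``Moving first'' subsection, it suffices to prove $\propb(\PDIne(\cl(I_0)\restriction S'))$ — for the game of length $\om1$ — for every $\cl(I_0)$-positive $S'$. Fix such an $S'$ and pick, by the claim, $S\in I_0^+\cap V$ with $S'$ being $\cl(J_S)$-positive. By the Kakuda--Magidor theorem~\cite{MR0485391} (as invoked in the proof of~\ref{cor:gurk422}), in $V[G]$ the filter generated by $U_S$ is a normal filter whose positive sets have a dense $\sigma$-closed subfamily; intersecting that family with $S'$ shows that $\cl(J_S)\restriction S'$ also has a dense $\sigma$-closed family of positive sets, and it is normal since $\kappa$-cc forcing preserves normality. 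Hence $\propb(\BMIe(\cl(J_S)\restriction S'))$, even for length $\om1$, and therefore $\propb(\PDIne(\cl(J_S)\restriction S'))$ as well (Banach--Mazur is stronger than pressing down, and empty-moves-first is stronger; the length-$\om1$ versions of these facts have the same proofs, the Banach--Mazur-to-pressing-down step using normality of $\cl(J_S)\restriction S'$). Finally $\cl(I_0)\restriction S'\subseteq\cl(J_S)\restriction S'$ and $\PDIne$ is monotone, so $\propb(\PDIne(\cl(I_0)\restriction S'))$ follows. As $S'$ was arbitrary, $\propb(\PDIe(\cl(I_0)))$ holds for the game of length $\om1$.

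The step I expect to be the real obstacle is the second paragraph: pinning down that an arbitrary $\cl(I_0)$-positive set in the generic extension is already positive for one of the ground-model ultrafilters $U_S$ — this is exactly what lets us import the Kakuda--Magidor structure. I would stress that we must \emph{not} aim for $\propb(\BMIe(\cl(I_0)))$: that would make $\cl(I_0)$ precipitous, contradicting nowhere precipitousness. The pressing-down game is genuinely weaker here, and the gain comes precisely from empty being confined to regressive functions, a restriction that normality of the larger ideals $\cl(J_S)$ lets us absorb.
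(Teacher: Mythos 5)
Your proof is correct, and it shares the paper's overall architecture --- nowhere precipitousness via Lemma~\ref{lem:nowhereprec}, then the reduction of $\propb(\PDIe(\cl(I_0)))$ to showing that every $\cl(I_0)$-positive set in $V[G]$ is positive for $\cl(U)$ for some ground-model normal ultrafilter $U$ extending (a restriction of) the dual of $I_0$, and finally the $\sigma$-closed dense subfamily plus monotonicity of $\PDIne$ --- but you establish the central step by a genuinely different mechanism. The paper works locally with the Levy collapse: given $p$ and a name $\name S$, it forms the trace $T=\{\alpha: p\not\forc\alpha\notin\name S\}$, applies the projection lemma~\eqref{eq:qwkjrqwr} to get $q\leq p$ and a positive $T''\subseteq T$, extends the dual of $I_0\restriction T''$ to a normal ultrafilter $U$, and checks directly (via compatibility of $q'$ with the witnesses $p_\beta$) that $q$ forces $\name S$ to be $\cl(U)$-positive. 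You instead isolate the ground-model identity $I_0=\bigcap_{S\in I_0^+}J_S$ and push it through the extension using the standard $\kappa$-cc description of $\cl(L)$ as the sets covered by ground-model elements of $L$, choosing the covers $Y_S$ in $V$ via maximal antichains and $\kappa$-completeness of each $J_S$. Your route is more abstract and works for any $\kappa$-cc forcing at this step (the Levy collapse is still needed, as you note, only to get the $\sigma$-closed dense subfamily of $\cl(U_S)$-positive sets); the price is fixing a choice $S\mapsto U_S$ over all of $I_0^+$ and taking a $2^\kappa$-fold intersection, whereas the paper's argument produces a single witnessing ultrafilter tied directly to the name and condition at hand. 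Both arguments are sound, and your closing remark --- that one must aim for $\PDIe$ rather than $\BMIe$ of $\cl(I_0)$ itself, since the latter would contradict nowhere precipitousness --- correctly identifies why the detour through the larger ideals $\cl(J_S)$ is unavoidable.
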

\begin{proof}
  Nowhere precipitous follows from \ref{lem:nowhereprec}.
  Let $S$ be a $P$-name for a $\cl(I)$-positive set
  and $p\in P$.
  Will show:
  \begin{equation}\label{eq:wtrqtwr}
    \parbox{0.8\textwidth}{%
      In $V$ there is a normal ultrafilter 
      $U$ and a $q\leq p$ forcing that $S$ is $\cl(U)$-positive.}
  \end{equation}
  Then according to the usual argument,
  the $\cl(U)$-positive sets have a $\sigma$-closed 
  dense subset, so nonempty wins $\BMIe(\cl(U)\restriction S)$,
  and --- since $\cl(U)$ extends $\cl(I)$ --- nonempty
  wins $\PDIne(I\restriction S)$ (even for length $\al1$).

  To prove  \eqref{eq:wtrqtwr}, 
  set $T=\{\alpha\in E^\kappa_{{\geq} \al1}:\, p\not\forc \alpha\notin
  S\}$. $T$ is $I$-positive.
  For each $\alpha\in T$ pick a witness $p_\alpha\leq p$.
  Let $q,T''$ be as in~\eqref{eq:qwkjrqwr}
  and pick a normal ultrafilter $U$ containing $T''$. We have
  to show that $q$ forces $S$ to be $\cl(U)$-positive.
  Assume otherwise, and pick $q'\leq q$ and $A\in U$ such that 
  $q'$ forces $A\cap S=\emptyset$. Then 
  $q'\in Q_\alpha$ for some $\alpha<\kappa$.
  Pick $\beta\in T''\cap A\setminus \alpha$. Then $p_\beta$ and $q'$
  are compatible, a contradiction to $p_\beta\forc \beta\in S$.
\end{proof}

After this paper was submitted, it came to our attention that
Fact~\ref{thm:oiuqweh} follows directly from a construction of Gitik, using
only a measurable:
In his paper {\em Some pathological examples of precipitous
ideals}~\cite{MR2414461}, he constructs a non-precipitous filter $U^*$ as
intersection of normal ultrafilters (see page 502 and Lemma~3.3).

We still give our proof of Fact~\ref{thm:oiuqweh} in the rest of the paper,
using a supercompact and assuming GCH in the ground model, since the
construction itself might be of some interest.

We will split the proof into several lemmas:
First we define the forcing $S(\kappa)$ as limit of $P_\alpha$.
We also define dense subsets $P'_\alpha$ of the $P_\alpha$.
Then we define the forcing notion $R_{\kappa+1}$, by doing the usual
Silver-style preparation with reverse Easton support. This
forcing notion is as required: In the
extension, we define~\ref{def:I} the ideal $I_0$ and show
that Fact~\ref{thm:oiuqweh}  holds
(\ref{lem:yrtu}~and~\ref{lem:hjwte}).

\subsection{The basic forcing}

So let us assume that $\kappa$ is an inaccessible cardinal, and define
$S(\kappa)$  as the limit of the ${<}\kappa$-support iteration
$(P_a,Q_a)_{a\in\kappa^+}$ of length $\kappa^+$ defined the following way: 
By induction on $a$, we define $Q_a$ together with
the $P_a$-names $B_a\subseteq \kappa$,
$g_a:\kappa\to\kappa+1$ and the $P_{a+1}$-names $A_a\subseteq \kappa$,
$f_a:\kappa\to\kappa$:

We identify the tree $T=(\kappa^+)^{{<}\omega}$ of finite $\kappa^+$-sequences
with $\kappa^+$ such that the root is identified with $0$.  We can assume
that $a<_T
b$ implies $a<b$ (as ordinals in $\kappa^+$).  We write $a\lhd_T b$ 
or $b\rhd_T a$ to
denote that $b$ is immediate $T$-successor of $a$. 
So for all $a\in\kappa^+$
there are $\kappa^+$ many $b$ with $a\lhd_T b$.  For $b\neq 0$ we also write
$\predec(b)$ to denote the (unique)  $a$ such that $a\lhd_T b$.

Assume we already have defined 
$P_a$, and the $P_{b+1}$-names $A_b,f_b$ for all $b<a$.
Then in $V[G_{P_a}]$, we define $B_a$, $g_a$, $Q_a$ 
and the $Q_a$-names $f_a$, $A_a$:
\begin{itemize}
  \item If $a=0$, we set $g_a(\alpha)=\kappa$ for all $\alpha\in\kappa$,
    and $B_a=\kappa$.
  \item Otherwise, we use some bookkeeping%
    \footnote{
      We just need to guarantee that $P_{\kappa^+}$ forces:
      For every $a\in T$ and every subset $B$ of $A_a$ 
      there is a $b\rhd_T a$ 
      such that $B^0_b=B$. Note that $A_b\subseteq B
      \subseteq A_a$.%
    }
    to find
    a $B^0_a\subseteq A_{\predec(a)}$, and we set:
    \begin{equation}\label{eq:wqoiu2}
      B_a=B^0_a\setminus \nabla_{b<a:\, \predec(b)=\predec(a)}A_b,
      \text{ and we set }
      g_a=f_{\predec(a)}.
    \end{equation}
  \item A condition $p$ of $Q_a$ is a function $f^p:\beta^p\to \kappa$
    such that $\beta^p\in\kappa$ and for all $\alpha\in\beta^p$:
    \begin{itemize}
      \item 
        if $\alpha\notin B_a$ or $g_a(\alpha)=0$ then $f^p(\alpha)=0$,
      \item otherwise $f^p(\alpha)<g_a(\alpha)$. 
      \item Additionally, if $a=0$ we require
        $f^p(\alpha)>0$.
    \end{itemize}
  \item We define the order on $Q_a$ by $q\leq p$ if $f^q\supseteq f^p$. 
  \item We set $f_a$ to be the canonical $Q_a$-generic, i.e.,
    $\bigcup_{q\in G}f^q$.
  \item We set $A_a=\{\alpha\in\kappa:\, f(\alpha)>0\}$. (So $A_0=\kappa$,
    and $A_a\subseteq B_a\subseteq B^0_a$.)
\end{itemize}

Note that to write the diagonal union in~\eqref{eq:wqoiu2}, 
we have to identify the index set with $\kappa$. Different 
identifications lead to the same result modulo club. In particular,
we get:
\begin{equation}\label{eq:lublub}
  \text{If }b<a\text{ and }\predec(b)=\predec(a)\text{ then }
  B_a\cap A_b\text{ is nonstationary.} 
\end{equation}

Obviously $Q_a$ is ${<}\kappa$-closed.
We now define $P'_a$ by induction on
$a\in\kappa^+$ and show (in the same induction) that $P_a'$
is ${<}\kappa$-closed and can be interpreted
to be a dense subset of $P_a$.
A condition $p\in P'_a$ is a function from 
$u\times \beta$ to $\kappa$ such that:
\begin{itemize}
  \item $\beta\in\kappa$.
  \item $u$ is a subset of $a$ of size ${<}\kappa$.
  \item $c\lhd_T b$ implies $\max(1,p(c,\alpha))>p(b,\alpha)$.
  \item $p(b,\alpha)>0$ implies that  $p\restriction b$ forces 
    (as element of $P_b$)%
    \footnote{%
      by induction, we already know that $P'_b$ is dense in $P_b$%
    }
    that $\alpha\in B_b$.
  \item If $0\in u$, then $p(0,\alpha)>0$ for all $\alpha<\beta$.
\end{itemize}
We can interpret $p\in P'_a$ to be a condition in $P_a$ in the 
obvious way; in particular we can define the order on $P'_a$
to be the one inherited from $P_a$.

\begin{Lem}
  \begin{itemize}
    \item $P'_a$ is a dense subset of $P_a$.
    \item The order on $P'_a$ (as inherited from $P_a$) is
      the extension relation.
    \item $P'_a$ is ${<}\kappa$-closed.
    \item $P_a$ is strategically ${<}\kappa$-closed.
  \end{itemize}
\end{Lem}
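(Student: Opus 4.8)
The plan is to prove the four claims by a simultaneous induction on $a\in\kappa^+$, exactly as the statement of $P'_a$ already suggests. The base case $a=0$ is essentially the observation that $P'_0=\{\langle\rangle\}$ (or, depending on the convention, the trivial forcing), so there is nothing to check. For the successor step, I would write a condition $p\in P_{a+1}$ as a pair $(p\restriction a, \dot q)$ where $p\restriction a\in P_a$ and $\dot q$ is a $P_a$-name for a condition in $Q_a$; by the inductive density of $P'_a$ we may strengthen $p\restriction a$ into $P'_a$, and then — since $Q_a$ is $({<}\kappa)$-closed and $P_a$ is strategically $({<}\kappa)$-closed, so $P_a$ adds no new ${<}\kappa$-sequences below a suitable condition — we may further decide $\dot q$ to be an actual function $f^q:\beta\to\kappa$ in the ground model, shrinking $p\restriction a$ if necessary so that it forces the relevant ``$\alpha\in B_b$'' statements for the finitely-or-${<}\kappa$-many $(b,\alpha)$ with $f^q(\alpha)>0$ and $b=a$. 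Amalgamating $p\restriction a\in P'_a$ with this $f^q$ and with the coherence requirement $\max(1,p(c,\alpha))>p(b,\alpha)$ for $c\lhd_T b$ (which at a successor only constrains the new coordinate against $\predec(a)$, using that $g_a=f_{\predec(a)}$ by~\eqref{eq:wqoiu2}, so $f^q(\alpha)<g_a(\alpha)=f_{\predec(a)}(\alpha)$ is automatic from the definition of $Q_a$) gives a condition of $P'_{a+1}$ below the original $p$; so $P'_{a+1}$ is dense. That the inherited order is extension is immediate from how we read $p\in P'_{a+1}$ back into $P_{a+1}$, and $({<}\kappa)$-closure of $P'_{a+1}$ follows because a descending $({<}\kappa)$-chain in $P'_{a+1}$ has union with domain of size ${<}\kappa$ (using $\kappa$ regular) whose restrictions to each coordinate are eventually-constant-enough to again satisfy the five bullet conditions.

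For the limit step at $a$ of cofinality ${<}\kappa$, a condition of $P'_a$ has support $u\subseteq a$ of size ${<}\kappa$, hence is already a condition in $P'_b$ for some $b<a$, so density and the order statement reduce to the inductive hypothesis, and $({<}\kappa)$-closure is checked as in the successor case: a descending chain of length ${<}\kappa$ has supports whose union still has size ${<}\kappa$ by regularity of $\kappa$, and the union of the chain, coordinatewise, is again a legitimate condition — here one uses that each ``$p\restriction b$ forces $\alpha\in B_b$'' is preserved under taking the union of the chain below $b$, which is where the inductive density and closure of $P'_b$ is invoked again. For limit $a$ of cofinality $\kappa$ (and for $a=\kappa^+$ itself, i.e.\ $S(\kappa)$), the $({<}\kappa)$-support means any condition has support bounded below $a$, so the same argument applies. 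Finally, ``$P_a$ is strategically $({<}\kappa)$-closed'' is a formal consequence of $P'_a$ being a $({<}\kappa)$-closed dense subset: empty's strategy is to always descend inside $P'_a$, and since $P'_a$ is genuinely $({<}\kappa)$-closed any such partial run of length ${<}\kappa$ has a lower bound, which empty plays; density then lets empty stay in $P'_a$ no matter what nonempty plays.

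The main obstacle — and the only place real care is needed — is the amalgamation in the successor step, specifically verifying that after we have put $p\restriction a$ into $P'_a$ and chosen the ground-model function $f^q$ for the $a$-th coordinate, the resulting object still satisfies the bullet ``$p(b,\alpha)>0$ implies $p\restriction b\forc\alpha\in B_b$'' \emph{simultaneously} for $b=a$. The issue is that $B_a=B^0_a\setminus\nabla_{b<a,\,\predec(b)=\predec(a)}A_b$ by~\eqref{eq:wqoiu2}, so forcing $\alpha\in B_a$ means forcing $\alpha\in B^0_a$ \emph{and} $\alpha\notin A_b$ for the relevant earlier $b$; the point is that $B^0_a\subseteq A_{\predec(a)}$ and that, by~\eqref{eq:lublub}, the offending intersections $B_a\cap A_b$ are nonstationary, so only boundedly-in-$\kappa$-many $\alpha$ below any fixed $\beta\in\kappa$ can fail — one may need to shrink $\beta$ (equivalently, restrict $f^q$) and strengthen $p\restriction a$ to decide $B^0_a\cap\beta$ and each $A_b\cap\beta$, which is possible precisely because $P_a$ adds no new ${<}\kappa$-sequences below a condition of $P'_a$. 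Carrying the bookkeeping of the footnote (every $B\subseteq A_a$ arises as some $B^0_b$) alongside this is routine once the closure facts are in place. I would present the successor amalgamation in full and treat the limit cases and the strategic-closure corollary briefly.
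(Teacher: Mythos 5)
The successor step, the case $\cf(a)\geq\kappa$, and the observation that strategic ${<}\kappa$-closure of $P_a$ follows formally from having a ${<}\kappa$-closed dense subset all match the paper's argument. But there is a genuine gap in your treatment of limits of cofinality ${<}\kappa$: you claim that a condition with support $u\subseteq a$ of size ${<}\kappa$ ``is already a condition in $P'_b$ for some $b<a$.'' That is false precisely in this case: when $\cf(a)<\kappa$, a set of size ${<}\kappa$ (for instance of size $\cf(a)$) can be cofinal in $a$, and since the iteration has ${<}\kappa$-support, $P_a$ really does contain conditions whose support is unbounded in $a$. So density at such $a$ does not reduce to any earlier stage; this is the one nontrivial limit case, and your argument applies verbatim only to $\cf(a)\geq\kappa$ (which, as you note, is trivial). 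The paper closes this case with a fusion argument: write $a=\sup(b_i:\,i\in\lambda)$ with $\lambda<\kappa$, and given $p\in P_a$ build a decreasing sequence $p'_i\in P'_{b_i}$ with $p'_i\leq p\restriction b_i$, using the inductively established ${<}\kappa$-closure of the $P'_{b_i}$ to take unions $\tilde p_i=\bigcup_{l<i}p'_l$ at limit stages $i$ and the inductive density to extend into $P'_{b_i}$; the union over all $i\in\lambda$ is then a condition of $P'_a$ below $p$. You need to supply this step.

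A smaller remark on your successor step: the amalgamation worry you flag is a non-issue. Once $p\restriction a$ has been strengthened to decide the name for the $Q_a$-coordinate to be an actual function $f^q$, that deciding condition already forces $\alpha\in B_a$ (and $g_a(\alpha)>f^q(\alpha)$) for every $\alpha$ with $f^q(\alpha)>0$, simply because $f^q$ is forced to be a $Q_a$-condition; no appeal to~\eqref{eq:lublub} or to nonstationarity is needed. Also, shrinking the domain of $f^q$ would yield a \emph{weaker} $Q_a$-condition, so it cannot be used to repair anything; the only adjustment actually required is the harmless one the paper performs, namely padding $f^q$ with zeros (or lengthening $p'$) so that the heights match.
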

\begin{proof}
  By induction on $a$ (formally, the definition of $P'_a$
  has to be done in the same induction as well).
  It is clear that $P'_a$ is closed and that the order
  is extension. We have to show that 
  $P'_a$ is dense in $P_a$. We do that  by case distinction on
  $\cf(a)$:
  \\
  {\bf The case} $\cf(a)\geq \kappa$ is trivial.
  \\
  {\bf The successor case:} Assume $a=b+1$ and $p\in
  P_a$. Then by induction we know that $P_b$ is strategically $\kappa$ closed,
  so we can strengthen $p\restriction b$ to some $p'\in P'_b$ deciding $p(b)$
  to be
  some $f^p$. We can assume that the height of $p'$ is at least the height of
  $f^p$, and we can extend $f^p$ up to the height of $p'$ by adding zeros on
  top.  Then $p'$ together with $f^p$ is a condition of $P'_a$ stronger
  than $p$.
  \\
  {\bf The case} $\cf(a)<\kappa$, i.e., 
  $a=\sup(b_i:\, i\in\lambda)$ for some $b_i<a$ and $\lambda<\kappa$. 
  We assume $p\in P_a$.
  We define by induction on $i\in \lambda$ decreasing conditions
  $p'_i\in P'_{b_i}$ stronger than $p\restriction b_i$.
  (By induction we know that $P_{b_i}$ is ${<}\kappa$-closed,
  so $\tilde p_i=\bigcup_{l<i} p'_l$ is in $P_{b_i}$ and, by induction,
  stronger than $p\restriction \sup_{l<i}b_l$. So we can
  extend $\tilde p_i$ to an element of $P_{b_i}$ stronger than $p\restriction b_i$.)
\end{proof}


\subsection{The Silver style iteration}

We now use the basic forcing $S(\kappa)$ in a reverse Easton iteration, the
first part acting as preparation to allow the preservation of measurability.
This method was developed by Silver to violate GCH at a measurable, and has
since been established as one of the basic tools in forcing with large
cardinals. We do not repeat all the details here, a more
detailed account can be
found in~\cite[21.4]{MR1940513}.
Note that here we do not just need to preserve measurability or
supercompactness (for this, we could just use Laver's general
result~\cite{MR0472529}), we need specific properties of the Silver iteration.

Fix a $j: V\to M$ such that 
\begin{equation}
  \text{$M$ is closed under $\kappa^{++}$-sequences.}
\end{equation}
In particular, $\cf(j(\kappa))>\kappa^+$.

We will use the reverse Easton iteration $(R_a,S(a))_{a\leq \kappa}$,
for $S(a)$ defined as above.
$R_{\kappa}$ is
the preparation that allows us to preserve measurability
(and we will not need it for anything else); we will
look at $R_\kappa\ast P_{a}$ for $a\leq \kappa^+$, and in particular at
$R_{\kappa+1}=R_\kappa\ast P_{\kappa^+}$ (recall that $S(\kappa)=
P_{\kappa^+}$).
We claim that $R_{\kappa+1}$ forces what we want.
We will also use $j(R_\kappa\ast P_{a})\in M$.
We get the usual properties:
\begin{itemize}
  \item The definition of $R$ is sufficiently absolute.
    In particular, we can (in $M$) factorize 
    $j(R_{\kappa+1})=R_{j(\kappa)+1}$ as
    $R_{\kappa+1}\ast R'$, where 
    $R'$ is the quotient forcing $R^{\kappa+1}_{j(\kappa)+1}$. 
    Note that $R'$ is ${<}\kappa^{+++}$-closed 
    (in $M$ and therefore in $V$ as well).
  \item Assume that $G$ is $R_{\kappa+1}$-generic over $V$ (and $M$).
    $M[G]$ is closed (as subset of $V[G]$) 
    under $\kappa^+$-sequences. In particular, $\kappa^+$
    is the same (and also equal to $2^\kappa$)
    in $V$, $V[G]$ and $M[G]$.
  \item For $p\in R_{\kappa+1}$, the domain of $j(p)$ is in
    $\kappa\cup \{j(\kappa)\}$, moreover
    $j(p)\restriction \kappa=p\restriction
    \kappa$ and
    $j(p)(j(\kappa))$ is isomorphic to $p(\kappa)$ such that
    $a\in\dom(p(\kappa))$ is mapped to $j(a)$.
    The image of $G$ under $j$ is element of
    $V[G]$ and subset of $M$ of size $\kappa^+$, therefore
    element of $M[G]$. 
    For $p\in G$ we can split in $M$ the
    condition $j(p)$ into
    $p\restriction \kappa$ (which is in $G$ anyway) and 
    $j(p(\kappa))$. We can assume that $G$ actually is 
    $R_\kappa\ast P'_{\kappa^+}$-generic (since 
    $P'_{\kappa^+}$ is dense in $P_{\kappa^+}$).
    Then $j(p(\kappa))$ is a $P'_{j(\kappa^+)}$-condition.
    So in $M[G]$,
    the set $\{j(p(\kappa)):\, p\in G\}$ is a directed 
    subset of $P'_{j(\kappa^+)}$ of size $\kappa^+$,
    therefore the union is a $P'_{j(\kappa^+)}$-condition $q_G$,
    a matrix of height $\kappa$ (which is less than $j(\kappa)$,
    so no contradiction to the definition of $P'_a$)
    and with domain $j''\kappa^+$
    (which has size $\kappa^+<j(\kappa)^{M[G]}$).
    We call this condition $q_G$, the minimal $G$-master condition.
  \item In $M[G]$, we call $q\in R'$ a $G$-master condition if it
    is stronger than $q_G$.
  \item If $H$ contains some $G$-master condition and is  $R'$-generic 
    over $V[G]$ (and therefore $M[G]$ as well), then
    we can extend in $V[G][H]$ the embedding $j$ 
    to $V[G]\to M[G][H]$
    by setting $j(\tau[G])=j(\tau)[G][H]$. This defines 
    in $V[G][H]$ 
    a normal ultrafilter $U=\{A[G]:\, \kappa\in j(A)[G][H]\}$ over $V[G]$.
    Since $R'$ is sufficiently closed, $U$ is already element of $V[G]$.
\end{itemize}

\begin{Def}
  In $V[G]$, $a\in \kappa^+$ is called a {\em positive index}, if 
  \begin{equation}
    (\forall \zeta<j(\kappa))\, (\exists q\ G\text{-master condition})\,
    q\forc (\kappa\in j(B_a)\ \&\ j(g_a)(\kappa)>\zeta).
  \end{equation}
  Otherwise, $a$ is called a {\em null-index}.
\end{Def}

Here we interpret $B_a$ and $g_a$ as $R_\kappa\ast P_a$-names in the canonical
way, so the $j$-images are $R_{j(\kappa)}\ast P_{j(a)}$-names.
In particular, whether $\kappa\in j(B_a)\ \&\ j(g_a)(\kappa)>\zeta$ holds is
already decided in the $R_{j(\kappa)}\ast P_{j(a)}$
extension, so we can assume that the $G$-master condition $q$ of the definition
only consists of the required minimal master condition $q_G$ ``from $j(a)$ onwards'',
more exactly we can assume:
\begin{itemize}
  \item $q\in R_{j(\kappa)+1}$ is factorized as $x \ast y$, for 
    $x\in R_{j(\kappa)}$ and $y$ is $R_{j(\kappa)}$-name for 
    a condition in $P'_{j(\kappa^+)}$.
  \item $x$ forces that $(j(b),\alpha)$ is not in the domain 
    of the matrix $y$ for any $b\geq a$ and $\alpha\geq \kappa$.
\end{itemize}

In particular, 
we can extend $q$ to a master condition $q'$ forcing that
\begin{equation}\label{eq:neu4}
  \text{$j(f_b)(\kappa)=0$ for all $b\geq a$.}
\end{equation}
Similarly, we can extend $q$ to a master condition $q'$ forcing that
\begin{equation}\label{eq:neu5}
  \text{$j(f_a)(\kappa)=\zeta$ and $j(f_b)(\kappa)=0$ for all $b>a$.}
\end{equation}

\begin{Lem}
  If $a$ is null and $b>_T a$, then $b$ is null as well. 
  Also, $0$ is a positive index.
\end{Lem}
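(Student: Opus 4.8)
The plan is to prove both statements by unwinding the definitions of "positive index" and "null-index" together with the combinatorial structure~\eqref{eq:wqoiu2} of the $B_b$, using the minimal master condition and the freedom we have in extending it recorded in \eqref{eq:neu4} and \eqref{eq:neu5}.

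First, for the claim that $0$ is a positive index: recall $g_0\equiv\kappa$ and $B_0=\kappa$, so in $V[G]$ we have $\kappa\in j(B_0)=j(\kappa)$ trivially, and $j(g_0)(\kappa)=j(\kappa)>\zeta$ for every $\zeta<j(\kappa)$. Since any $G$-master condition (e.g.\ $q_G$ itself, or an extension of it) already forces these facts — they are decided in the ground model $M[G]$ up to the trivial verification above — the defining condition for $0$ being a positive index holds. So this half is essentially immediate from the definitions; I would spell out just enough to make clear that no genericity is needed beyond the existence of $q_G$.

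For the monotonicity claim, suppose $a$ is null and $b>_T a$. It suffices to treat the case $b\rhd_T a$, i.e.\ $a=\predec(b)$, and then iterate along the (finite) $T$-path from $a$ to $b$. Fix $\zeta<j(\kappa)$ and suppose towards a contradiction that some $G$-master condition $q$ forces $\kappa\in j(B_b)\ \&\ j(g_b)(\kappa)>\zeta$. By~\eqref{eq:wqoiu2} we have $g_b=f_{\predec(b)}=f_a$ and $B_b=B^0_b\setminus\nabla_{c<b,\ \predec(c)=a}A_c$; in particular $B_b\subseteq B^0_b\subseteq A_a=\{\alpha:f_a(\alpha)>0\}$. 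Applying $j$ and using that $q$ forces $\kappa\in j(B_b)$, we get that $q$ forces $\kappa\in j(A_a)$, i.e.\ $j(f_a)(\kappa)>0$; and since $q$ forces $j(g_b)(\kappa)=j(f_a)(\kappa)>\zeta$, we get that $q$ forces $\kappa\in j(B_a)$ is needed too. Here is the point: $j(B_a)$ is in turn controlled by~\eqref{eq:wqoiu2} applied to $a$, and the crucial observation is that $j(f_a)(\kappa)=j(g_b)(\kappa)$, so $q$ forces $j(g_a)(\kappa)=j(f_{\predec(a)})(\kappa)\ge$ something, \emph{and} $\kappa\in j(B_a)$; more precisely, I would argue that $q$ forces $\kappa\in j(B_a)\ \&\ j(g_a)(\kappa)>\zeta'$ for the appropriate $\zeta'$ (using that a positive value of $f_a$ at $\kappa$ forces, via the definition of the forcing $Q_a$, that $\kappa\in B_a$ and that $g_a(\kappa)>f_a(\kappa)>\zeta$, whence $j(g_a)(\kappa)>\zeta$). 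This contradicts $a$ being null.

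The main obstacle I anticipate is bookkeeping the relationship between $B_b$, $A_a$, $g_b$, $g_a$ and the generic functions $f_a,f_{\predec(a)}$ precisely enough that the implication "$q$ witnesses $b$ positive" $\Rightarrow$ "$q$ (or a slight modification) witnesses $a$ positive" goes through cleanly; in particular one must be careful that $\kappa\in j(B_b)$ genuinely forces $\kappa\in j(A_a)$ and hence $j(f_a)(\kappa)>0$, which then by the definition of the poset $Q_a$ (a condition with $f^p(\alpha)>0$ requires $\alpha\in B_a$ and $f^p(\alpha)<g_a(\alpha)$) forces $\kappa\in j(B_a)$ and $j(g_a)(\kappa)>j(f_a)(\kappa)=j(g_b)(\kappa)>\zeta$. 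The diagonal-union term $\nabla A_c$ only makes $B_b$ smaller, so it causes no trouble for this direction. Once the one-step case is established, the general case $b>_T a$ follows by finite induction along the $T$-branch, and the "$0$ is positive" statement is recorded separately as above.
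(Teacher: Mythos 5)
Your proposal is correct and follows essentially the same route as the paper's own (very terse) proof: both rest on the observation that for $b>_T a$ one has $B_b\subseteq A_a\subseteq B_a$ and $g_b=f_{\predec(b)}\leq g_a$ (forced by the empty condition), so any master condition witnessing $b$ positive for a given $\zeta$ also witnesses $a$ positive. Your extra unwinding of the definition of $Q_a$ to justify $\kappa\in j(B_a)$ and $j(g_a)(\kappa)>j(g_b)(\kappa)$, and your explicit note that $0$ is positive because $B_0=\kappa$ and $g_0\equiv\kappa$, just make explicit what the paper leaves implicit.
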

\begin{proof}
  Pick $\zeta<j(\kappa)$
  such that every master condition forces $j(g_a)(\kappa)<\zeta$ or $\kappa\notin
  j(B_a)$. But the empty condition forces $j(g_b)(\kappa)\leq j(g_a)(\kappa)$ and
  $j(B_b)\subseteq j(B_a)$.
\end{proof}

\begin{Def}\label{def:I}
  In $V[G]$, we define the ideal $I_0$ by 
  $A\in I_0$ iff there is 
  an $X\subseteq \kappa^+$ of size $\kappa$ consisting of
  null-indices such that
  \begin{equation}
    A\subseteq \nabla_{i\in X}B_i\text{ modulo a club set.}%
  \footnote{%
  As mentioned above, $\nabla_{i\in X}B_i$ is only defined modulo a 
  club set, since $X$ is not canonically isomorphic to $\kappa$
  (it is just a subset of $\kappa^+$ of size $\kappa$).
  To avoid ambiguity, we just fix from now on for each such $X$ a bijection 
  to $\kappa$ and make $\nabla_{i\in X}B_i$ well defined;
  still we use ``subset modulo club set'' in the definition
  on $I_0$.%
  }
\end{equation}
\end{Def}

\begin{Lem}
$I_0$ is a normal ideal on $\kappa$
\end{Lem}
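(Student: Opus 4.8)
The plan is to verify, in $V[G]$, the four features of a normal ideal for $I_0$: closure under subsets, ${<}\kappa$-completeness, closure under diagonal unions, and that $I_0$ is proper, i.e.\ $\kappa\notin I_0$. Closure under subsets is immediate, since the same witnessing set $X$ and club work for any subset. For the rest it is convenient to note first that a $\kappa$-sized set of null indices exists: for each $\delta<\kappa$ pick, by the bookkeeping, some $b_\delta\rhd_T 0$ with $B^0_{b_\delta}=\delta$; these $\kappa$ indices are pairwise distinct, and each is null since $B_{b_\delta}\subseteq B^0_{b_\delta}=\delta$ forces $j(B_{b_\delta})\subseteq j(\delta)=\delta<\kappa$, so no master condition can force $\kappa\in j(B_{b_\delta})$. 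Consequently every nonstationary set is in $I_0$: it is, modulo a club, contained in $\emptyset$, hence in $\nabla_{i\in X}B_i$ for $X=\{b_\delta:\delta<\kappa\}$; in particular $I_0\supseteq\NS_\kappa$.

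For ${<}\kappa$-completeness and closure under diagonal unions I would amalgamate the witnessing index sets. Let $(A_\alpha)_{\alpha<\lambda}$ be in $I_0$ --- $\lambda<\kappa$ in the first case, $\lambda=\kappa$ in the second --- witnessed by $\kappa$-sized sets $X_\alpha$ of null indices, fixed bijections $e_\alpha\colon\kappa\to X_\alpha$, and clubs $C_\alpha$ with $A_\alpha\subseteq\nabla_{i\in X_\alpha}B_i$ modulo $C_\alpha$. Set $X=\bigcup_\alpha X_\alpha$; it has size $\kappa$ (as $\kappa\cdot\kappa=\kappa$) and consists of null indices; fix a bijection $e\colon\kappa\to X$. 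Each reindexing map $e^{-1}\circ e_\alpha\colon\kappa\to\kappa$ is injective, hence closed below every point of some club; and when $\lambda=\kappa$ the map $\gamma\mapsto\sup\{(e^{-1}\circ e_\alpha)(\xi):\alpha,\xi<\gamma\}$ has all values ${<}\kappa$ by regularity, so its closure points form a club $D$ such that $\beta\in D$, $\alpha<\beta$, $\xi<\beta$ imply $(e^{-1}\circ e_\alpha)(\xi)<\beta$. It follows that, modulo a club, $\nabla_{i\in X_\alpha}B_i\subseteq\nabla_{i\in X}B_i$ for each $\alpha$; intersecting with $\bigcap_{\alpha<\lambda}C_\alpha$ (resp., for $\lambda=\kappa$, with $\triangle_{\alpha<\kappa}C_\alpha$) shows $\bigcup_{\alpha<\lambda}A_\alpha\subseteq\nabla_{i\in X}B_i$ (resp.\ $\nabla_{\alpha<\kappa}A_\alpha\subseteq\nabla_{i\in X}B_i$) modulo a club, so the union lies in $I_0$.

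The substantial point --- and the one I expect to be the main obstacle --- is properness, $\kappa\notin I_0$, which I would attack by reflection through the embedding. Assume towards a contradiction that $\kappa\subseteq\nabla_{i\in X}B_i$ modulo a club $C$, with $X$ a $\kappa$-sized set of null indices and $e\colon\kappa\to X$. Take an $R'$-generic $H$ over $V[G]$ through a $G$-master condition and lift to $j\colon V[G]\to M[G][H]$. As $j(C)$ is closed in $j(\kappa)$ while $j(C)\cap\kappa=C$ is unbounded in $\kappa$, we get $\kappa\in j(C)\subseteq j\bigl(\nabla_{i\in X}B_i\bigr)$, so by elementarity there is $\xi<\kappa$ with $\kappa\in j(B_{e(\xi)})$; put $a=e(\xi)$, a null index. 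The aim is now to contradict the positive/null dichotomy for $a$: walking down the $T$-path from $0$ to $a$ to its last positive node $p$ and exploiting the positivity of $p$ together with \eqref{eq:neu4}, \eqref{eq:neu5} (which allow prescribing the values $j(f_b)(\kappa)$ along that path) and the near-disjointness \eqref{eq:lublub}, one wants to produce a $G$-master condition forcing $\kappa\in j(B_a)$ while pushing $j(g_a)(\kappa)$ above the threshold $\zeta_a$ witnessing that $a$ is null. The delicate part is that $\xi$, hence $a$, is produced by $H$ rather than chosen in advance, so one must run this uniformly over the ${\leq}\kappa$ candidate indices $e(\xi)$ --- using $\cf(j(\kappa))>\kappa^+$ to keep $\sup_{\xi<\kappa}\zeta_{e(\xi)}$ below $j(\kappa)$ --- and making the interaction between the shape of master conditions ``from $j(a)$ onwards'' and the prescription of the $f_b(\kappa)$'s work out is the crux of the lemma.
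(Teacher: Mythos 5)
Your treatment of the closure properties is correct and is essentially the paper's (one-line) proof: amalgamate the witnessing index sets $X_\alpha$ into $X=\bigcup X_\alpha$ and intersect (resp.\ diagonally intersect) the clubs; your extra care with the reindexing bijections and the club $D$ of closure points is exactly the bookkeeping the paper sweeps into ``modulo a club set''. Your observation that $\kappa$ many null indices exist (via the $b_\delta\rhd_T 0$ with $B^0_{b_\delta}=\delta$, which can never satisfy $\kappa\in j(B_{b_\delta})$) is a correct and worthwhile addition: it is genuinely needed for $I_0$ to contain even the bounded sets, and the paper does not spell it out. The one place you diverge is properness. The paper deliberately does \emph{not} prove $\kappa\notin I_0$ in this lemma; nontriviality is recorded only after Lemma~\ref{lem:jhwetet} (``$a$ positive implies $A_a$ is $I_0$-positive''), as the special case $a=0$. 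Your sketch of properness is in fact that lemma's argument: the ``delicate part'' you flag --- that the null index $b$ with $\kappa\in j(B_b)$ is produced generically --- is resolved exactly as you suspect, by first bounding all the witnesses $\zeta_b$ for $b\in X$ by a single $\xi<j(\kappa)$ using $\cf(j(\kappa))>\kappa^+$, then choosing a master condition forcing $\kappa\in j(B_0)$ and $j(f_0)(\kappa)>\xi$ with $j(f_c)(\kappa)=0$ for $c>0$ via \eqref{eq:neu5}, and finally extending it to decide $b$; the contradiction then comes from a case distinction on the $T$-position of $b$ relative to $0$ using \eqref{eq:lublub} and \eqref{eq:blubb}. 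So nothing in your proposal would fail, but to be complete you should either carry out that case distinction or, as the paper does, defer properness to Lemma~\ref{lem:jhwetet} and let the present lemma assert only the closure properties.
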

\begin{proof}
  Assume that $A_i\cap C_i\subseteq
  \nabla_{l\in X_i}B_l$ for all $i\in \kappa$. Then $(\nabla_{i\in\kappa} A_i)\cap \Delta_{i\in\kappa}
  C_i\subseteq \nabla_{l\in \bigcup X_i}B_l$ modulo a club set.
\end{proof}

By elementarity, if $q$ is a $G$-master condition and
if $\varphi(c,B_\alpha[G],g_\alpha[G])$ holds in $V[G]$ 
for some $c\in V$, then for all $H$ containing $q$ we get in $M[G][H]$
\begin{equation}
   \varphi(j(c),j(B_\alpha)[G][H],j(g_\alpha)[G][H]).
\end{equation}

\begin{Lem}\label{lem:jhwetet}
  In $V[G]$ the following holds:
  If $a$ is a positive index, then $A_a$ is $I_0$-positive.
\end{Lem}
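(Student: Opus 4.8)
The plan is a contradiction argument via a lift of $j$ and a carefully chosen $G$-master condition. Suppose $A_a$ were not $I_0$-positive; then by Definition~\ref{def:I} there are a set $X\subseteq\kappa^+$ of size $\kappa$ consisting of null-indices and a club $C\subseteq\kappa$ with $A_a\cap C\subseteq\nabla_{i\in X}B_i$. Let $0=c_0\lhd_T c_1\lhd_T\dots\lhd_T c_n=a$ be the $T$-branch ending in $a$. Since null-ness is inherited along $\lhd_T$ and $c_n=a$ is positive, every $c_k$ is a positive index, so $X\cap\{c_0,\dots,c_n\}=\emptyset$. For each $i\in X$ fix $\zeta_i<j(\kappa)$ witnessing that $i$ is null, i.e.\ such that no $G$-master condition forces $\kappa\in j(B_i)$ together with $j(g_i)(\kappa)>\zeta_i$; equivalently (every master condition is a condition below $q_G$), every $G$-master condition forces that $\kappa\in j(B_i)$ implies $j(g_i)(\kappa)\le\zeta_i$. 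As $\cf(j(\kappa))>\kappa^+\ge|X|$ the ordinal $\zeta^*:=\sup_{i\in X}\zeta_i$ is still $<j(\kappa)$.

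The core of the proof is to construct a $G$-master condition $q'$ forcing
\[
  \kappa\in j(A_a),\qquad j(f_{c_k})(\kappa)>\zeta^*\ \ (0\le k\le n),\qquad
  j(f_b)(\kappa)=0\ \text{ for all }b\notin\{c_0,\dots,c_n\}.
\]
In the spirit of \eqref{eq:neu4} and \eqref{eq:neu5} I would take $q'$ to be an extension of the minimal master condition $q_G$ that only adds entries in the column $\kappa$: off the branch the value $0$, and along the branch strictly decreasing positive values $\zeta^*<q'(j(c_n),\kappa)<\dots<q'(j(c_0),\kappa)<j(\kappa)$. Checking that this is a legitimate condition of $P'_{j(\kappa^+)}$ below $q_G$ is routine (the $\lhd_T$-monotonicity at column $\kappa$ uses the $\max(1,\cdot)$ where a branch node lies above an off-branch one); the only real point is that for each $k$, $q'\restriction j(c_k)$ must force $\kappa\in j(B_{c_k})$. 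By \eqref{eq:wqoiu2} this amounts to forcing $\kappa\notin j(A_b)$ for the (off-branch, hence zero-valued) indices $b<c_k$ with $\predec(b)=c_{k-1}$ — automatic — together with forcing $\kappa\in j(B^0_{c_k})$, which is exactly where positivity of $c_k$ enters (positivity of $a$ alone would only control $j(g_a)(\kappa)=j(f_{c_{n-1}})(\kappa)$, so all the $c_k$ are needed). The delicate part, which I expect to be the main obstacle, is to run this recursion along the branch so that the positivity witnesses at the various $c_k$, together with the requirement that off-branch column-$\kappa$ entries be $0$, stay mutually compatible; this uses the strong closure of the quotient forcing $R'$.

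Granted such a $q'$, pick $H\ni q'$ generic over $V[G]$ for $R'$ and lift $j$ to $j:V[G]\to M[G][H]$ as in the set-up. Since $q'\forc\kappa\in j(A_a)$ and $\kappa\in j(C)$ holds automatically ($C$ is club in $\kappa$ and $j\restriction\kappa$ is the identity), applying $j$ to $A_a\cap C\subseteq\nabla_{i\in X}B_i$ gives $\kappa\in j(\nabla_{i\in X}B_i)$. Unwinding the diagonal union along the bijection $\kappa\to X$ fixed in Definition~\ref{def:I} (which lies in $V[G]$ and is therefore respected by the lift), this says: $\kappa\in j(B_i)$ for some $i\in X$. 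Fix such an $i$ and put $i'=\predec(i)$ (defined, since $c_0=0\notin X$). If $i'\notin\{c_0,\dots,c_n\}$, then $q'$ forces $j(f_{i'})(\kappa)=0$, so $\kappa\notin j(A_{i'})\supseteq j(B_i)$, contradicting $\kappa\in j(B_i)$. If $i'=c_k$, then $g_i=f_{c_k}$ by \eqref{eq:wqoiu2}, so $q'$ forces $j(g_i)(\kappa)=j(f_{c_k})(\kappa)>\zeta^*\ge\zeta_i$; but $q'$ is a $G$-master condition, so by the choice of $\zeta_i$ it also forces that $\kappa\in j(B_i)$ implies $j(g_i)(\kappa)\le\zeta_i$, and together with $\kappa\in j(B_i)$ this is a contradiction. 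Hence no such $X$ and $C$ exist, i.e.\ $A_a$ is $I_0$-positive.
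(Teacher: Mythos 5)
Your overall strategy coincides with the paper's (lift $j$ below a suitable $G$-master condition, force $\kappa\in j(A_a\cap C)$, and contradict the null-ness witnesses of the indices in $X$), and your endgame is actually cleaner: because your $q'$ controls the entire branch and all off-branch column-$\kappa$ values, you only distinguish whether $\predec(i)$ lies on the branch, whereas the paper's master condition controls only $j(f_a)(\kappa)$ and $j(f_c)(\kappa)$ for $c>a$ and therefore needs a four-way case distinction on the $T$-position of the null index, invoking \eqref{eq:lublub} in the endgame instead of in the construction.

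The genuine gap is the one you flag yourself: the existence of $q'$ is not established, and the route you propose --- a recursion combining separate positivity witnesses for each $c_k$, glued by the closure of $R'$ --- is not obviously workable, since positivity of $c_k$ only provides \emph{some} master condition for each $k$, and nothing guarantees that a single decreasing sequence can be threaded through witnesses for all $k$ at once. The parenthetical claim that ``positivity of $a$ alone would only control $j(g_a)(\kappa)$, so all the $c_k$ are needed'' is the misstep; one witness for $a$ suffices. Indeed: (i) $B_a\subseteq B^0_a\subseteq A_{c_{n-1}}\subseteq B_{c_{n-1}}\subseteq\cdots$, so forcing $\kappa\in j(B_a)$ already forces $\kappa\in j(B_{c_k})$ for every $k$; (ii) the clause $\max(1,p(c,\alpha))>p(b,\alpha)$ for $c\lhd_T b$ makes the column-$\kappa$ values strictly decrease along the branch wherever they are positive, so a witness with $j(g_a)(\kappa)>\zeta^*+1$, extended as in \eqref{eq:neu5} so that $j(f_a)(\kappa)=\zeta^*+1$ and $j(f_b)(\kappa)=0$ for all $b>a$, automatically forces $j(f_{c_k})(\kappa)>\zeta^*$ for all $k\le n$; (iii) the off-branch zeros below $a$ need not (and cannot simply) be written into column $\kappa$ by hand --- deciding $\kappa\in j(B^0_{c_k})$ is a statement about the generic below $j(c_k)$ and requires strengthening $q_G$ away from that column, which is exactly what the witness supplies --- but they come for free: if $b$ leaves the path at $c_k$ ($k<n$) via $d\rhd_T c_k$, $d\ne c_{k+1}$, then \eqref{eq:lublub} makes one of $A_d\cap B_{c_{k+1}}$, $A_{c_{k+1}}\cap B_d$ nonstationary, and since $\kappa\in j(A_{c_{k+1}})$ and $\kappa\in j(C')$ for every club $C'\in V[G]$, the condition already forces $\kappa\notin j(A_d)\supseteq j(A_b)$. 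With (i)--(iii) your $q'$ exists and the remainder of your argument is correct.
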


Note that this implies: $a$ is a positive index iff $B_a$ is a $I_0$-positive
set; and $I_0$ is nontrivial (since $0$ is a positive index).

\begin{proof} Assume otherwise, and fix an
appropriate $X$ and a club set $C$, i.e., 
\begin{equation}\label{eq:gubbug}
  A_a\cap C\subseteq \nabla_{i\in X}B_i.
\end{equation}
Since $X$ consists of null-indices, there is for each $b\in X$ a 
$\zeta_b<j(\kappa)$ such that every master condition forces
$\kappa\notin j(B_b)$ or $j(g_b)(\kappa)<\zeta_b$.
Since $\cf(j(\kappa))>\kappa$, we can find an upper bound $\xi$ for
all $\zeta_b$. So every master condition forces
\begin{equation}\label{eq:blubb}
  \kappa\in j(B_b)\text{ implies }j(g_b)(\kappa)<\xi\text{ for all }b\in X.
\end{equation}
Since $a$ is a positive index, we can find a master condition $q$ 
forcing $\kappa\in j(B_a)$ and $j(g_a)(\kappa)>\xi+1$.
According to~\eqref{eq:neu5} we can extend $q$ to $q'$ such that 
\begin{equation}\label{eq:lipr}
  j(f_a)(\kappa)>\xi \quad \text{and}\quad
  j(f_c)(\kappa)=0\text{ for all }c>a.
\end{equation}
Since $C$ is club, $\kappa$ is forced to be in $j(C)$.
So $q'$ forces $\kappa\in j(A_a)\cap j(C)$.
According to~\eqref{eq:gubbug},
$A_a\cap C\subseteq \nabla_{b\in X}B_b$ holds in $V[G]$,
so $q'$ forces $\kappa\in j(A_a)\cap j(C)\subset j(\nabla_{b\in X}B_b)$.
Let $Z$ be the sequence $(B_b)_{b\in X}$.
Recall that we fixed (in $V[G]$) some bijection $i$ of
$\kappa$ to $X$, to make $\nabla Z$ well defined.
So $j(\nabla Z)$ uses $j(i)$, a bijection from
$j(\kappa)$ to $j(X)$; and $\kappa\in j(\nabla Z)$ means:
There is an $\alpha<\kappa$ such that $\kappa\in j(Z)_{j(i)(\alpha)}$.
Note that $j(Z)_{j(i)(\alpha)}=j(B_{i(\alpha)})$ and set $b=i(\alpha)\in X$.
So 
\begin{equation}\label{eq:eu3}
  \kappa\in  j(B_b)\text{ for  some }b\in X 
  \text{ (in particular, $b$ is null-index).}
\end{equation}
We further extend $q'$ to some $q''$ deciding the $b$ of~\eqref{eq:eu3}.
So $q''$ forces
\begin{equation}\label{eq:fnfz}
  \kappa\in j(A_a \cap B_b)\text{ for the null-index $b$}.
\end{equation}
We will get a contradiction by case distinction on the position of $b$
relative to $a$ in the tree $T$:
\begin{itemize}
  \item $b<_T a$: This contradicts the fact that $b$ is a null-index 
    and $a$ not. 
  \item $a\lhd_T b$: 
    Then $g_b=f_a$, and
    $q''$ forces that $\kappa\in j(B_b)$ and
    $j(g_b)(\kappa)\geq j(f_a)(\kappa)>\xi>\zeta_b$,
    contradicting~\eqref{eq:blubb}.
  \item $a\lhd_T c$ and $c<_T b$:
    Then $c$ is 
    (as an ordinal) bigger than $a$, and 
    $q''$ forces $\kappa\notin j(A_c)$.
    So $\kappa\notin j(B_b)\subseteq j(A_c)$.
  \item So $a$ and $b$ have to be incomparable in $T$, and there is some node
    $c$ where $a$ and $b$ split.
    Let  $a'$ and $b'$ the according immediate $T$-successors of $c$.
    So $a'\rhd_T c$, $b'\rhd_T c$, $a'\leq_T a$, $b'\leq_T b$ and $a'\neq b'$.
    Let $\underline m$ be the minimum of $a',b'$ (as ordinals)
    and $\overline m$ the maximum.
    According to~\eqref{eq:lublub} $A_{\underline{m}}\cap B_{\overline{m}}$
    is nonstationary, so $\kappa\notin j(A_{\underline{m}}\cap B_{\overline{m}})$.
    So~\eqref{eq:fnfz}  implies that $b'=b={\underline{m}}$.
    Also $j(g_b)(\kappa)=j(f_c)(\kappa)\geq j(f_a)(\kappa)>\xi$
    according to~\eqref{eq:lipr} which contradicts~\eqref{eq:blubb}.
\end{itemize}
\end{proof}

\begin{Lem}\label{lem:yrtu}
  In $V[G]$, empty has a winning strategy for $\BMzne(I_0)$.
\end{Lem}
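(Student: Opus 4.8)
\emph{Strategy of the proof.} The plan is to have empty drive the play down a branch of the tree $T$ and exploit that the generics $f_a$ strictly decrease along branches. The underlying mechanism is: \emph{if $0=a_0\lhd_T a_1\lhd_T a_2\lhd_T\cdots$ is a branch of $T$, then $\bigcap_{n\in\omega}A_{a_n}=\emptyset$.} Indeed, for each $n$ we have $g_{a_{n+1}}=f_{\predec(a_{n+1})}=f_{a_n}$ by~\eqref{eq:wqoiu2}, and by the definition of $Q_{a_{n+1}}$ any $\alpha$ with $f_{a_{n+1}}(\alpha)>0$ lies in $B_{a_{n+1}}$ and satisfies $0<f_{a_{n+1}}(\alpha)<g_{a_{n+1}}(\alpha)=f_{a_n}(\alpha)$; so $\alpha\in\bigcap_n A_{a_n}$ would produce the infinite strictly descending ordinal sequence $f_{a_0}(\alpha)>f_{a_1}(\alpha)>\cdots$, which is absurd.

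Empty then plays so as to maintain, after nonempty's move $S_n$, a positive index $a_n$ and a branch segment $0=a_0\lhd_T a_1\lhd_T\cdots\lhd_T a_n$ with $S_n\subseteq A_{a_n}$. Since nonempty's opening move is some $I_0$-positive $S_0\subseteq\kappa=A_0$ and $0$ is a positive index, the invariant holds initially with $a_0=0$. Assuming it holds after stage $n$, empty will (see below) find a positive index $a_{n+1}\rhd_T a_n$ with $A_{a_{n+1}}\subseteq S_n$, and respond with $X_{n+1}:=A_{a_{n+1}}$, which is $I_0$-positive by Lemma~\ref{lem:jhwetet}; for any $I_0$-positive $S_{n+1}\subseteq X_{n+1}$ that nonempty then chooses, the invariant persists with $a_{n+1}$. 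In the resulting run $\bigcap_{n}S_n\subseteq\bigcap_n A_{a_n}=\emptyset$ by the mechanism above, so empty wins; hence empty wins $\BMzne(I_0)$.

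So everything reduces to the following \emph{realization step}: for every positive index $a$ and every $I_0$-positive $S\subseteq A_a$ there is a positive index $b\rhd_T a$ with $A_b\subseteq S$ (take the $\le$-least such $b$ to make empty's strategy well-defined). The bookkeeping hands us some $b\rhd_T a$ with $B^0_b=S$, and then $A_b\subseteq B_b\subseteq B^0_b=S$ automatically, so the only thing to check is that $b$ is a positive index, equivalently (by the remark after Lemma~\ref{lem:jhwetet}) that $B_b=S\setminus N_b$ is $I_0$-positive, where $N_b=\nabla_{b'<b,\ \predec(b')=a}A_{b'}$. If $B_b$ were in $I_0$, then, since the null-index siblings $b'<b$ contribute (via $A_{b'}\subseteq B_{b'}$) only an $I_0$-set to $N_b$, we would obtain an $I_0$-positive $S'\subseteq S$ that is, modulo a club, contained in a diagonal union $\nabla_{b'\in Y}A_{b'}$ with $|Y|\le\kappa$ and each $b'\in Y$ a positive-index sibling of $b$; by~\eqref{eq:lublub} the sets $A_{b'}$ $(b'\in Y)$ are pairwise nonstationary. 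Ruling out this configuration is the main obstacle. One way is to arrange the bookkeeping so that each subset of $A_a$ occurs as $B^0_b$ for cofinally many successors $b$ of $a$, and then argue, via genericity and~\eqref{eq:lublub}, that no $I_0$-positive set can be swallowed modulo a club by such a $\kappa$-sized diagonal union of competing positive siblings. Probably more robust is to rerun the argument on the $M[G]$-side as in the proof of Lemma~\ref{lem:jhwetet}: using that $a$ is a positive index and the extension~\eqref{eq:neu5} (which freely sets $j(f_a)(\kappa)$ large while zeroing $j(f_c)(\kappa)$ for all $c>a$, so that $\kappa\notin j(N_b)$), produce for each $\zeta<j(\kappa)$ a $G$-master condition forcing $\kappa\in j(B_b)=j(S)\setminus j(N_b)$ together with $j(g_b)(\kappa)=j(f_a)(\kappa)>\zeta$ — which is exactly ``$b$ is a positive index''.
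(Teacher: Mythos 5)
Your overall mechanism is the same as the paper's: empty steers the run down a branch $a_0\lhd_T a_1\lhd_T\cdots$ of $T$ and wins because $0<f_{a_{n+1}}(\alpha)<g_{a_{n+1}}(\alpha)=f_{a_n}(\alpha)$ for any $\alpha\in\bigcap_n A_{a_n}$, so that intersection is empty. That part is correct and is exactly the paper's argument.

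The gap is in your \emph{realization step}, and you have correctly located it yourself but not closed it. You insist that empty's move be a full set $A_b$ with $A_b\subseteq S_n$, which forces you to prove that the node $b\rhd_T a$ at which the bookkeeping enumerates $S_n$ (i.e.\ $B^0_b=S_n$) is a positive index, equivalently that $B_b=S_n\setminus\nabla_{b'<b,\,\predec(b')=a}A_{b'}$ is $I_0$-positive. There is no reason this should hold, and neither of your two suggested repairs works as stated: the ``cofinal bookkeeping plus genericity'' idea is not an argument, and the $M[G]$-side idea founders because the master condition witnessing that $a$ is a positive index forces $\kappa\in j(B_a)$ and $j(f_a)(\kappa)$ large, but gives you no control over whether $\kappa\in j(S_n)$ for your particular positive $S_n\subseteq A_a$ (getting $\kappa\in j(Y)$ for a prescribed positive $Y$ is essentially the content of Lemma~\ref{lem:hjwte}, whose proof requires a nontrivial search through the tree and in particular does not let you prescribe in advance \emph{which} successor of $a$ does the job).

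The paper sidesteps this entirely by weakening what empty plays: empty's response is $X_{n+1}=A_b\cap S_n$ rather than $A_b$, so one only needs some $b\rhd_T a$ with $A_b\cap S_n$ positive. If the enumerating node $c$ (with $B^0_c=S_n$) has $B_c$ positive, take $b=c$ and note $A_c\subseteq B_c\subseteq S_n$. If instead $B_c\in I_0$, then $S_n$ is covered modulo $I_0$ by $\nabla_{d<c,\,d\rhd_T a}A_d$, and normality of $I_0$ yields a sibling $d\rhd_T a$ with $A_d\cap S_n$ positive; empty plays that intersection. The invariant $X_{n+1}\subseteq A_{a_{n+1}}$ with $a_{n+1}\rhd_T a_n$ is all the branch argument needs, so nothing is lost by playing the intersection. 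With this one change your proof goes through; as written, the key claim is unproven and quite possibly false.
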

\begin{proof}
Assume that we have a partial run of the game of length $n$, 
corresponding to the node $a$ in $T$,
and empty has played $X_n$ as last move, which is a subset of $A_a$.
Assume that nonempty plays the $I_0$-positive set $B^0\subseteq A_a$.
Let $b\rhd_T a$ be such that $X_{n+1}:=A_b\cap B^0$ is $I_0$-positive,
and let $X_{n+1}$ be empty's answer (and $b$ be the new $T$-node corresponding
to the new partial run). This is a winning strategy since 
$f_n(\alpha)$  decreases along
every branch of $T$. It remains to be shown that we can find a $b\rhd_T a$
as above: $B^0$ itself is enumerated as $B^0_c$ by the bookkeeping at some
stage $c\rhd_T a$.
Recall that $B_c=B^0_c\setminus \nabla_{d<c,d\rhd_T a}A_d$.
If $B_c$ is positive, then we can set $b=c$.
Otherwise, since $B^0_c$ is positive, some $B^0_c\cap A_d$ has to be positive
for some $d<c,d\rhd_T a$ (since $I_0$ is normal); and we can set $b=d$.
\end{proof}

It remains to be shown:
\begin{Lem}\label{lem:hjwte}
  In $V[G]$, for every $I_0$-positive $X$
  there is a normal ultrafilter $D_1$ extending the dual of $I_0$
  and containing $X$.
\end{Lem}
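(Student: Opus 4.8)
The plan is to produce the ultrafilter $D_1$ as one of the normal ultrafilters on $\kappa$ arising (in $V[G]$, by the closure of $R'$) from an extension $j\colon V[G]\to M[G][H]$ of the embedding, where $H$ is a generic through a carefully chosen $G$-master condition; the choice of that master condition is what forces $S\in D_1$ while keeping $D_1$ above the dual of $I_0$.

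First I would reduce to the case $S=B_a$ for a positive index $a$. By the bookkeeping (the footnote to the definition of the iteration: every subset of $A_c$ is realized as some $B^0_b$ with $b\rhd_T c$), enumerate $S$ as $B^0_{a_1}$ with $a_1\rhd_T 0$, so $B_{a_1}\subseteq S$. If $B_{a_1}$ is $I_0$-positive, then $a_1$ is a positive index by Lemma~\ref{lem:jhwetet}, and we take $a=a_1$; otherwise $B_{a_1}\in I_0$, so $S\setminus B_{a_1}=S\cap\nabla_{b<a_1,\,b\rhd_T 0}A_b$ is $I_0$-positive, and by normality of $I_0$ there is a positive $T$-child $b\rhd_T 0$ with $S\cap A_b$ $I_0$-positive; such $b$ is a positive index, and we replace $S$ by the positive set $S\cap A_b\subseteq A_b$, replace $0$ by $b$, and iterate. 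This recursion descends a branch of $T$, and it must terminate: an infinite branch $0\lhd_T b_1\lhd_T b_2\lhd_T\cdots$ would consist of positive indices with $0<f_{b_{n+1}}(\alpha)<f_{b_n}(\alpha)$ on $A_{b_{n+1}}$ for all $n$, and, discarding the countably many exceptional $I_0$-sets that appear along the way (using that $I_0$ is $\sigma$-complete) together with the countably many clubs, one is left with a positive set of coordinates $\alpha$ on which $(f_{b_n}(\alpha))_n$ is a strictly decreasing sequence of ordinals — impossible. So we obtain a positive index $a$ with $B_a\subseteq S$ modulo a club and modulo $I_0$; since any normal ultrafilter extending the dual of $I_0$ and containing $B_a$ also contains $S$, it suffices to handle $S=B_a$.

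Now use that $M$ is closed under $\kappa^{++}$-sequences, so $\cf(j(\kappa))>\kappa^+$, and that the null-indices form a subset of $\kappa^+$, each equipped with a bound $\zeta_{b'}<j(\kappa)$ as in the definition of null-index; put $\xi=\sup\{\zeta_{b'}:b'\text{ a null-index}\}<j(\kappa)$. Since $a$ is a positive index there is a $G$-master condition forcing $\kappa\in j(B_a)$ and $j(g_a)(\kappa)>\xi+1$, and by~\eqref{eq:neu5} we may strengthen it to a $G$-master condition $q$ which in addition forces $j(f_a)(\kappa)=\xi+1$ and $j(f_c)(\kappa)=0$ for all $c>a$. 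Choose an $R'$-generic $H$ over $V[G]$ with $q\in H$, extend $j$ to $j\colon V[G]\to M[G][H]$ as before, and set $D_1=\{A\subseteq\kappa:A\in V[G],\ \kappa\in j(A)\}$; this is a normal ultrafilter on $\kappa$ lying in $V[G]$. Since $q$ forces $j(f_a)(\kappa)=\xi+1>0$ we get $\kappa\in j(A_a)\subseteq j(B_a)$; as $B_a\subseteq S$ modulo a club, $\kappa\in j(S)$, and even if $B_a\subseteq S$ only modulo $I_0$ we may conclude $S\in D_1$ once we know $D_1$ extends the dual of $I_0$ (then $B_a\in D_1$ and $B_a\setminus S\in I_0$ is not in $D_1$). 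For the remaining requirement, let $A\in I_0$, say $A\subseteq\nabla_{i\in X}B_i$ modulo a club with $X$ a set of null-indices; if $\kappa\in j(A)$ then, exactly as in the proof of Lemma~\ref{lem:jhwetet}, $\kappa\in j(B_{b'})$ for some null-index $b'\in X$, and since $q$ forces $\kappa\in j(B_{b'})$ it forces $j(g_{b'})(\kappa)<\zeta_{b'}\le\xi<\xi+1$; the case distinction on the position of $b'$ relative to $a$ in $T$ — using this inequality, the values $j(f_a)(\kappa)=\xi+1$ and $j(f_c)(\kappa)=0$ for $c>_T a$, the fact that nullity propagates up $T$, and~\eqref{eq:lublub} — then gives a contradiction in every case, verbatim as in Lemma~\ref{lem:jhwetet}. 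Hence $\kappa\notin j(A)$, i.e.\ $A\notin D_1$, so $D_1$ extends the dual of $I_0$.

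The main obstacle is the reduction step, i.e.\ proving that every $I_0$-positive set contains, modulo $I_0$, some $B_a$ with $a$ a positive index; the termination of the descending recursion has to be argued with care, combining the strict decrease of the $f$-values along $T$-branches, the exhaustiveness of the bookkeeping, and the $\sigma$-completeness of $I_0$. Once this is in place, the rest is a direct adaptation of the master-condition computations already carried out for Lemma~\ref{lem:jhwetet}.
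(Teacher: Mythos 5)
Your second half --- fixing a single bound $\xi<j(\kappa)$ over all null-indices, choosing a master condition forcing $\kappa\in j(B_a)$, $j(f_a)(\kappa)>\xi$ and $j(f_c)(\kappa)=0$ for $c>a$, and rerunning the case distinction of Lemma~\ref{lem:jhwetet} to see that the derived normal ultrafilter avoids $j(B_{b'})$ for every null-index $b'$ --- is essentially the paper's argument and is fine. The gap is in your reduction, specifically in the termination of the descending recursion. At each step you pass from a positive $S_n$ to $S_{n+1}=S_n\cap A_{b_{n+1}}$ for a \emph{single} child $b_{n+1}$, but the discarded piece $S_n\setminus A_{b_{n+1}}$ is not $I_0$-null: all you know is that $S_n$ is covered mod $I_0$ by the diagonal union of the $A_c$ over \emph{all} the earlier siblings $c$, and the part of $S_n$ living on the other siblings may well be positive. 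So the ``countably many exceptional $I_0$-sets'' you discard along the way are not null sets, and $\sigma$-completeness gives you nothing; you cannot conclude that a positive set of coordinates survives into $\bigcap_n A_{b_n}$. Indeed $\bigcap_n A_{b_n}=\emptyset$ for \emph{every} branch of $T$ (the values $f_{b_n}(\alpha)$ strictly decrease there), and since $I_0$ is nowhere precipitous (Lemma~\ref{lem:yrtu}) decreasing sequences of positive sets with empty intersection are exactly what one must expect for this ideal, so nothing rules out your recursion running forever.

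The paper avoids this by arguing on the whole subtree rather than along one branch: let $X$ be the set of indices $a$ with $Y\cap A_a$ positive, and suppose no $a\in X$ has a positive child $c$ with $B_c\subseteq Y$. Then each $a\in X$ has at most $\kappa$ many children in $X$, and $Y\cap A_a$ is covered mod $I_0$ by $\nabla_{c\rhd_T a,\,c\in X}A_c$; by induction on $n$, the set $Y$ is covered mod $I_0$ by $\nabla_{c\in X,\ \mathrm{ht}(c)=n}A_c$ for every $n$. Here only one null set per level is thrown away, so a positive set of $\alpha$'s survives, and for each such $\alpha$ the witnessing nodes at successive levels form a branch (distinct nodes at the same level have almost disjoint $A$-sets by~\eqref{eq:lublub}), putting $\alpha$ into $\bigcap_n A_{b(n)}$ for some branch $b$ --- the desired contradiction. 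Replace your single-branch descent by this subtree argument; the rest of your write-up then goes through.
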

\begin{proof}
  It is enough to show: If  
  $Y$ is $I_0$-positive, then there is a master condition $q$
  forcing
  \begin{equation}
    \kappa\in j(Y)\text{ and } \kappa\notin j(B_b)\text{ for all null-indices
    }b.
  \end{equation}
  Let $X$ be the set of indices $a$ such that $Y\cap A_a$ is
  $I_0$-positive. Assume $a\in X$. We will use $Y\cap A_a$ as $B^0_b$ for
  some $b\rhd_T a$. 
  We have to distinguish two cases:
  \begin{description}
    \item[Case 1] There is a positive  $c\rhd_T a$ such that 
      $B_c\subseteq Y$. 
      In particular, this will be the case if $b$ itself is positive, i.e.\
      if $B_b=B^0_b\setminus \nabla_{c<b,c\rhd_T a} A_c$
      is $I_0$-positive.
    \item[Case 2] There is no such $c$.
      In particular, in this case $b$ is a null-index, so
      $Y\cap A_a$ is covered (modulo $I_0$) by $\nabla_{c<b,c\rhd_T a}A_c$.
      Then $c\notin X$ for any $c\geq b$ such that $c\rhd_T a$.
      So at most $\kappa$ many immediate $T$-successors of $a$ are in $X$;
      and $Y\cap A_a$ is covered (modulo $I_0$) by $\nabla_{c\rhd_T a,c\in
      X}A_c$
      as well.
  \end{description}
  We claim that Case 1 has to occur for some $a$.
  Otherwise,  $X$ is a subtree of $T$ such that every node has at most
  $\kappa$ many successors, i.e., there are only $\kappa$ many branches through
  $X$. By induction on $n$,
  $Y$ is covered (module $I_0$) by $\nabla_{c\in X,\ T\text{-height}(c)=n}A_c$.
  But for any branch $b$, the set $\bigcap_{n\in\omega} A_{b(n)}$ is empty
  (witnessed by the decreasing sequence $f_{b(n)}$), a contradiction.

  So we can pick a $T$-minimal $b$ such that Case 1 holds.
  Note that $|j''\kappa^+|<\cf(j(\kappa))$.
  For every null-index $c$ there is a witness $\xi_c<j(\kappa)$,
  so there is a universal bound $\xi$.
  Since $b$ is a positive index, we can find a master condition
  $q$ forcing $j(g_b)(\kappa)>\xi$ and $\kappa\in j(B_b)$. 
  Recall that $B_b\subseteq Y$ (mod $I_0$), so
  $q$ forces that $\kappa\in j(Y)$. We now 
  extend $q$ to $q'$ so that it forces $\kappa\notin j(A_c)$ for all $c>b$.
  Then $q'$ is as required: $\kappa\notin j(B_c)$ for any null-index $c$,
  by a similar case distinction as in the proof of Lemma~\ref{lem:jhwetet}.
\end{proof}

\bibliographystyle{amsplain}
\bibliography{morepressing}

\end{document}